\newcommand{\Div}{\divergence}
\newcommand{\ep}{\bfvarepsilon}
\newcommand{\R}{\mathbb R}
\newcommand{\N}{\mathbb N}
\newcommand{\s}{\mathbb S}
\newcommand{\E}{\mathbb E}
\newcommand{\p}{\mathbb P}
\newcommand{\Q}{\mathcal Q}
\newcommand{\F}{\mathcal F}
\newcommand{\LL}{\mathcal L}
\newcommand{\dd}{\mathrm{d}}
\newcommand{\dx}{\,\mathrm{d}x}
\newcommand{\dt}{\,\mathrm{d}t}
\newcommand{\dxt}{\,\mathrm{d}x\,\mathrm{d}t}
\newcommand{\ds}{\,\mathrm{d}\sigma}
\newcommand{\dxs}{\,\mathrm{d}x\,\mathrm{d}\sigma}
\DeclareMathOperator{\Bog}{Bog}
\DeclareMathOperator{\Var}{Var}
\begin{document}

 \begin{center}
   \Large Existence theory for stochastic power law fluids\mdseries \\[2ex]
   \large Dominic Breit\quad 2015
\normalsize\\[3ex]

Department of Mathematics, Heriot-Watt University\\ 
Edinburgh EH14 4AS, UK\\
Mathematical Institute, LMU Munich\\ Theresienstra\ss e 39, 80333 Munich, Germany

 	\end{center}

\textbf{abstract:}
We consider the equations of motion for an incompressible Non-Newtonian fluid in a bounded Lipschitz domain $G\subset\R^d$
during the time interval $(0,T)$ together with a stochastic perturbation driven by a Brownian motion $\bfW$. The balance of momentum reads as
$$\dd\bfv=\Div\bfS\dt-(\nabla\bfv)\bfv\dt+\nabla\pi\dt+\bff\dt+\Phi(\bfv)\,\dd\bfW_t,$$
where $\bfv$ is the velocity, $\pi$ the pressure and $\bff$ an external volume force. We assume the common power law model $\bfS(\ep(\bfv))=\big(1+|\ep(\bfv)|\big)^{p-2} \ep(\bfv)$ and show the existence of weak (martingale) solutions provided $p>\tfrac{2d+2}{d+2}$. Our approach is based on the $L^\infty$-truncation and a harmonic pressure decomposition which are adapted to the stochastic setting.\\\

  \textbf{keyword:}
Non-Newtonian fluids, weak solution, martingale solution, stochastic parabolic PDE's, generalized Navier-Stokes equations, $L^\infty$-truncation, pressure decomposition

   2010 MSC: 35R60; 35D30;
    60H15; 35K55; 76D03

\section{Introduction}

The flow of a homogeneous incompressible fluid in a bounded Lipschitz body $G\subset\R^d$, ($d=2,3$), during the time interval $(0,T)$ is described by the following set of equations on $\Q:=(0,T)\times G$ (see for instance \cite{BAH})\footnote{For a better understanding of the problem we start with a survey about the deterministic problem but the topic of the paper is the corresponding SPDE.}
\begin{align}\label{1.1}
\left\{\begin{array}{rc}
-\rho\partial_t \bfv+\Div \bfS=\rho(\nabla \bfv)\bfv+\nabla \pi-\rho\bff& \mbox{in $\Q$,}\\
\Div \bfv=0\qquad\qquad\,\,\,\,& \mbox{in $\Q$,}\\
\bfv=0\qquad\,\,\,\,\quad\quad& \mbox{ \,on $\partial G$,}\\
\bfv(0,\cdot)=\bfv_0\,\qquad\qquad&\mbox{ \,in $G$.}\end{array}\right.
\end{align}
Here the unknown quantities are the velocity field $\bfv:\Q\rightarrow\R^d$, $\Q:=(0,T)\times G$, and the pressure $\pi:\Q\rightarrow\R$. The functions $\bff:\Q\rightarrow \R^d$ represent a system of volume forces, $\bfv_0:G\rightarrow\R^d$ the initial datum, $\bfS:\Q\rightarrow\s^d$ is the stress deviator\footnote{$\s^d$:=space of all symmetric $d\times d$ matrices; the full stress tensor is $\bfsigma=\bfS-\pi \bfI$.} and $\rho>0$ the density of the fluid. Equation (\ref{1.1}$)_1$  and (\ref{1.1}$)_2$ describe the conservation of balance and the conservation of mass respectively. Both are valid for all homogeneous liquids and gases. In order to describe a specific fluid one needs a constitutive law which relates the stress deviator $\bfS$ to the symmetric gradient $\ep(\bfv):=\tfrac{1}{2}\big(\nabla\bfv+\nabla\bfv^T\big)$ of the velocity $\bfv$. In the easiest case this relation is linear, i.e.,
\begin{align}\label{1.2}
\bfS=\bfS(\ep(\bfv))=\nu \ep(\bfv),
\end{align} 
where $\nu>0$ is the viscosity of the fluid. In this case we have $\Div\bfS=\nu \Delta\bfv$ and (\ref{1.1}) are the classical Navier-Stokes equations. Its mathematical observation started with the work of Leray and Ladyshenskaya (see \cite{La} and for a more recent approach \cite{Ga1,Ga2}). The existence of a weak solution (where derivatives are to be understood in a distributional sense) can be shown by nowadays standard arguments. However the regularity (i.e. the existence of a strong solution) is still open.\\
Only fluids with simple molecular structure e.g. water, oil and certain gases fulfil a linear relation such as (\ref{1.2}). Those who does not are called Non-Newtonian fluids (see \cite{AM}). A special class among these are generalized Newtonian fluids. Here the viscosity is assumed to be a function of the shear rate $|\ep(\bfv)|$  and the constitutive relations reads as
\begin{align}\label{1.3}
\bfS(\ep(\bfv))=\nu(|\ep(\bfv)|) \ep(\bfv).
\end{align} 
An external force can produce two different reactions:
\begin{itemize}
\item The fluid becomes thicker (for example batter): the viscosity of a shear thickening fluid is an increasing function of the shear rate;
\item The fluid becomes thinner (for example ketchup): the viscosity of a shear thinning fluid is a decreasing function of the shear rate.
\end{itemize}
The power-law model for Non-Newtonian respectively generalized Newtonian fluids
\begin{align}\label{1.4'}
\bfS(\ep(\bfv))=\nu_0\big(1+|\ep(\bfv)|\big)^{p-2} \ep(\bfv)
\end{align}
is very popular among rheologists.
Here $\nu_0>0$ and $p\in(1,\infty)$ are specified by physical experiments. An extensive list for specific $p$-values for different fluids can be found in \cite{BAH}. Apparently many interesting $p$-values lie in the interval $[\frac 32,2]$.\\
The mathematical discussion of power-law models started in the late sixties with the work of Lions and Ladyshenskaya (see \cite{La}-\cite{La3} and \cite{Li}).
Due to the appearance of the convective term the equations for power law fluids (the constitutive law is given by (\ref{1.4'})) significantly depend on the value of $p$. The first results were achieved by Ladyshenskaya and Lions for $p\geq\frac{3d+2}{d+2}$ (see \cite{La} and \cite{Li}). They show the existence of a weak solution in the space
\begin{align*}
L^p(0,T;W^{1,p}_{0,\Div}(G))\cap L^\infty(0,T;L^2(G)).
\end{align*}
In this case it follows from parabolic interpolation that $\bfv\otimes\bfv:\ep(\bfv)\in L^1(\Q)$. So the solution is also a test-function and the existence proof is based on monotone operator theory and compactness arguments. \\
This results were improved by Wolf \cite{Wo} to the case $p>\frac{2d+2}{d+2}$ via $L^\infty$-truncation. In this situation we have that $(\nabla\bfv)\bfv\in L^1(\Q)$ and therefore we can test with functions from $L^\infty(\Q)$.
 The basic idea (which was already used in the stationary case in \cite{FrMaSt} together with the bound $p\geq\frac{2d}{d+1}$) is to approximate $\bfv$ by a bounded function $\bfv_\lambda$ which is equal to $\bfv$ on a large set and its $L^\infty$-norm can be controlled by $\lambda$.\\
Wolf's result was improved to $p>\frac{2d}{d+2}$ in \cite{DRW} and \cite{BrDS} by the Lipschitz truncation method. Under this restriction to $p$ we have $\bfv\otimes\bfv\in L^1(\Q)$ which means we can test by Lipschitz continuous functions. So one has to approximate $\bfv$ by a Lipschitz continuous function $\bfv_\lambda$ which is quite challenging in the parabolic situation.\footnote{The easier steady case was observed in \cite{FMS2} and \cite{DMS}}\\
From several points of view it is reasonable to add a stochastic part to the equation of motion. 
\begin{itemize}
\item It can be understood as a turbulence in the fluid motion (see \cite{mikul}). 
\item It can be interpreted as a perturbation from the physical model. 
\item Apart from the force $\bff$ we are observing there might be further quantities with a (usually small) influence on the motion.
\end{itemize}
We are therefore interested in the set of equations:\footnote{We neglect physical constants for simplicity.}
\begin{align}\label{eq:}
\left\{\begin{array}{rc}
\dd\bfv=\Div\bfS\dt-(\nabla\bfv)\bfv\dt+\nabla\pi\dt+\bff\dt+\Phi(\bfv)\dd\bfW_t
& \mbox{in $\Q$,}\\
\Div \bfv=0\qquad\qquad\qquad\qquad\qquad\,\,\,\,& \mbox{in $\Q$,}\\
\bfv=0\qquad\qquad\qquad\qquad\,\,\,\,\quad\quad& \mbox{ \,on $\partial G$,}\\
\bfv(0)=\bfv_0\,\qquad\qquad\qquad\qquad\qquad&\mbox{ \,in $G$,}\end{array}\right.
\end{align}
with $\bfS$ given by (\ref{1.4'}).
We assume that $\bfW$ is a Brownian motion with values in a Hilbert space (see section 2 for details) .
We suppose that $\Phi$ growths linearly - roughly speaking $|\Phi(\bfv)|\leq c(1+|\bfv|)$ and $|\Phi'(\bfv)|\leq c$ (for a precise formulation see (\ref{eq:phi}) in section 2).
The idea behind this is an interaction between the solution and the random perturbation caused by
the Brownian motion. For large values of $|\bfv|$ we expect a larger perturbation than for small values.\\
There is a huge literature regarding the existence of weak solutions to the stochastic Navier-Stokes equations starting with the paper \cite{BeTe} by Bensoussan and Temam. For a recent overview we refer to \cite{Fl2}. However there seems to be a very limited knowledge about the Non-Newtonian fluid problem. In \cite{ChCh} a bipolar shear thinning fluid is observed. The authors of \cite{ChCh} assume the constitutive relation
\begin{align*}
\bfS=\nu_0\big(1+|\ep(\bfv)|\big)^{p-2} \ep(\bfv)-\nu_1\Delta\ep(\bfv),
\end{align*}
where $\nu_0,\nu_1>0$ and $1< p\leq 2$.
Compared with our model this results in an additional bi-Laplacian $\Delta^2\bfv$ in the equations of motion. This gives enough initial regularity to argue directly with monotone operators without using any form of truncation. Moreover, the main part of the equation is linear thus there is no problem with going to the limit in the approximated equation.\\
A further observation of stochastic power law fluids is done in \cite{Yo} and \cite{TeYo}. Following the approach in \cite{MNRR} they consider periodic boundary conditions and obtain existence for $p\geq\tfrac{9}{5}$ (in dimension 3). The restriction to a periodic boundary allows them to test the equation by 
the Laplacian of the solution (without using cut-off functions), which is not possible in general.
Moreover, in \cite{Yo} and \cite{TeYo} there is no interaction between the solution and the Brownian motion, modelled via the function $\Phi$ in (\ref{eq:}) which be quite reasonable also form the physical point of view.\\
We will investigate an existence theory which removes all this drawbacks. Our final result is the existence of a martingale weak solution to (\ref{eq:}) with $\bfS$ given by (\ref{1.4'}) in the sense of Definition \ref{def:weak} provided $p>\frac{2d+2}{d+2}$.
This solution is weak both in the analytical sense and in the probabilistic sense. The precise statement can be found in Theorem \ref{thm:main} in the next session. In fact, we extend the  results from \cite{Wo} to the stochastic fashion where the solution space is
$$L^2(\Omega,\F,\p;L^\infty(0,T;L^2(G)))\cap L^p(\Omega,\F,\p;L^p(0,T;W_{0,\Div}^{1,p}(G))).$$

Our procedure is as follows:  After a precise formulation of the stochastic background in section 2 we
investigate the pressure. As usual the pressure disappears in the weak formulation (see definition \ref{def:weak}) but can be reconstructed. Following the ideas from \cite{Wo} we relate to each term in the equation a pressure part. So also a stochastic part of the pressure is included. In section 4 we study auxiliary problems which are stabilized by adding a large power of $\bfv$. This approach is based on the Galerkin method.\\
In section 5 we prove the main theorem. Here we follow the approach in \cite{Wo} adapted to the stochastic fashion. The problems are as usual the convergences in the nonlinear parts of the approximated system. 
We
have to combine the techniques from nonlinear PDEs with stochastic calculus
for martingales. Note that it is not possible to work directly with test
functions. Instead of this we apply It\^{o}'s formula to certain functions of $\bfv$. Finally we use monotone operator theory combined with $L^\infty$-truncation to justify the limit procedure in the nonlinear tensor $\bfS$.

\section{Probability framework \& main theorem}
Let $(\Omega,\mathcal F,\p)$ be a probability space equipped with a filtration $\left\{\mathcal F_t,\,\,0\leq t\leq T\right\}$, which is a nondecreasing
family of sub-$\sigma$-fields of $\mathcal F$, i.e. $\mathcal F_s\subset\mathcal F_t$ for $0\leq s\leq t\leq T$. We further assume that $\left\{\mathcal F_t,\,\,0\leq t\leq T\right\}$ is right-continuous and $\mathcal F_0$ contains all the $\p$-negligible events in $\mathcal F$.\\
For a Banach space $(X,\|\cdot\|_X)$ we denote by for $1\leq p<\infty$ by $L^p(\Omega,\mathcal F,\p;X)$ the Banach space of all measurable function $v:\Omega\rightarrow X$ such that
\begin{align*}
\E\big[\|v\|_X^p\big]<\infty,
\end{align*}
where the expectation is taken w.r.t. $(\Omega,\mathcal F,\p)$.\\
Let $U$ be a Hilbert space with orthonormal basis $(\bfe_k)_{k\in\N}$ and let $L_2(U,L^2(G))$ be the set of Hilbert-Schmidt operators from $U$ to $L^2(G)$.  Define further the auxiliary space $U_0\supset U$  as
\begin{align}\label{eq:U0}
\begin{aligned}
U_0&:=\left\{\bfe=\sum_k \alpha_k\bfe_k:\,\,\sum_k \frac{\alpha_k^2}{k^2}<\infty\right\},\\
\|\bfe\|^2_{U_0}&:=\sum_{k=1}^\infty \frac{\alpha_k^2}{k^2},\quad \bfe=\sum_k \alpha_k\bfe_k.
\end{aligned}
\end{align}
Throughout the paper we consider a cylindrical Wiener process
$\bfW=(\bfW_t)_{t\in[0,T]}$ on $(\Omega,\F,(\F_t),\p)$ which has the form
\begin{align}\label{eq:W}
\bfW_\sigma=\sum_{k\in\N}\bfe_k \beta_k(\sigma)
\end{align}
with a sequence $(\beta_k)$ of independent real valued Brownian motions on $(\Omega,\mathcal F,(\F_t),\p)$. The embedding $U\hookrightarrow U_0$ is Hilbert-Schmidt and trajectories of $\bfW$ are $\p$-a.s. continuous (see \cite{PrZa}) with values in $U_0$.
Now 
\begin{align*}
\int_0^t \psi(\sigma)\,\dd\bfW_\sigma,\quad \psi\in L^2(\Omega,\F,\p;L^2(0,T;L_2(U,L^2(G)))),
\end{align*}
$\psi$ progressively measurable,
defines a $\p$-almost surely continuous $L^2(\Omega)$ valued $\mathcal F_t$-martingale.\footnote{for stochastic calculus in infinite dimensions we refer to \cite{PrZa}} Moreover, we can multiply with test-functions since
 \begin{align*}
\int_G\int_0^t \psi(\sigma)\,\dd\bfW_\sigma\cdot \bfphi\dx=\sum_{k=1}^\infty \int_0^t\int_G \psi(\sigma)( \bfe_k)\cdot\bfphi\dx\,\dd\beta_k(\sigma),\quad \bfphi\in L^2(G),
\end{align*}
is well-defined.\\
We suppose the following linear growth assumptions on $\Phi$ (following \cite{Ho1}): For each $\bfz\in L^2(G)$ there is a mapping $\Phi(\bfz):U\rightarrow L^{2}(G)$ defined by $\Phi(\bfz)\bfe_k=g_k(\bfz(\cdot))$. In particular, we suppose
that $g_k\in C^1(\R^d)$
and the following conditions for some $L\geq0$
\begin{align}\label{eq:phi}
&\sum_{k\in\N}|g_k(\bfxi)| \leq L(1+|\bfxi|),\quad\sum_{k\in\N}|\nabla g_k(\bfxi)|^2 \leq L,\quad\bfxi\in\R^d.
\end{align}
Not that the first assumption in (\ref{eq:phi}) is slightly stronger than 
\begin{align*}
&\sum_{k\in\N}|g_k(\bfxi)|^2 \leq L(1+|\bfxi|^2),\quad\bfxi\in\R^d,
\end{align*}
supposed in \cite{Ho1} and additionally implies
\begin{align}\label{eq:phi2}
\sup_{k\in\N} k^2|g_k(\bfxi)|^2\leq c(1+|\bfxi|^2).
\end{align}
Now we are ready to give a precise formulation of the meaning of solutions.
\begin{definition}[Solution]\label{def:weak}
Let $\Lambda_0,\Lambda_\bff$ be Borel probability measures on $L^2_{\Div}(G)$ and $L^2(\Q)$ respectively. Then
$$\big((\Omega,\mathcal F,(\mathcal F_t),\p),\bfv,\bfv_0,\bff,\bfW)$$
is called a martingale weak solution to \eqref{eq:} with $\bfS$ given by (\ref{1.4'}) with the initial datum $\Lambda_0$ and right-hand-side $\Lambda_\bff$ provided
\begin{enumerate}
\item $(\Omega,\mathcal F,(\mathcal F_t),\p)$ is a stochastic basis with a complete right-continuous filtration,
\item $\bfW$ is an $(\mathcal F_t)$-cylindrical Wiener process,
\item $\bfv\in L^2(\Omega,\F,\p;L^\infty(0,T;L^2(G)))\cap L^p(\Omega,\F,\p;L^p(0,T;W^{1,p}_{0,\Div}(G)))$ is progressively measurable,
\item $\bfv_0\in L^2(\Omega,\F_0,\p;L^2(G))$ with $\Lambda_0=\p\circ \bfv_0^{-1}$,
\item $\bff\in L^2(\Omega,\F,\p;L^2(\Q))$ is adapted to $(\mathcal F_t)$ and $\Lambda_\bff=\p\circ \bff^{-1}$,
\item for all
 $\bfvarphi\in C^\infty_{0,\Div}(G)$ and all $t\in[0,T]$ there holds $\p$-a.s.
\begin{align*}
\int_G\bfv(t)\cdot\bfvarphi\dx &+\int_0^t\int_G\bfS(\ep(\bfv)):\ep(\bfphi)\dxs-\int_0^t\int_G\bfv\otimes\bfv:\ep(\bfphi)\dxs\\&=\int_G\bfv_0\cdot\bfvarphi\dx
+\int_G\int_0^t\bff\cdot\bfphi\dxs+\int_G\int_0^t\Phi(\bfv)\,\dd\bfW_\sigma\cdot \bfvarphi\dx.
\end{align*}
\end{enumerate}
\end{definition}

\begin{remark}
\label{rem:mar}
\begin{itemize}
\item As we are looking for martingale solutions (weak solutions in the probabilistic sense) we can only assume the laws of $\bfv_0$ and $\bff$.
\end{itemize}
\end{remark}

\begin{theorem}[Existence]\label{thm:main}
Assume (\ref{1.4'}) with $p>\tfrac{2d+2}{d+2}$ as well as \eqref{eq:phi} and \eqref{eq:phi2}. Suppose further that
\begin{equation}\label{initial}
\int_{L^2_{\Div}(G)}\big\|\bfu\big\|_{L^2(G)}^\beta\,\dd\Lambda_0(\bfu)<\infty,\quad
\int_{L^2(\Q)}\big\|\bfg\big\|_{L^2(\Q)}^\beta\,\dd\Lambda_\bff(\bfg)<\infty,
\end{equation}
with $\beta:=\max\big\{\frac{2(d+2)}{d},\frac{p(d+2)}{d}\big\}$. Then there is a martingale weak solution to (\ref{eq:}) in the sense of Definition \ref{def:weak}.
\end{theorem}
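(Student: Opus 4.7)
The plan is to follow the stochastic adaptation of Wolf's strategy announced in the introduction, with all approximation, truncation and monotone operator arguments carried out on an enlarged path space. First I construct approximate solutions $\bfv^{N,\varepsilon}$ by projecting onto a finite-dimensional Galerkin subspace $X_N\subset W^{1,p}_{0,\Div}(G)$ spanned by eigenfunctions of the Stokes operator, and stabilising the stress by an extra term of the form $\varepsilon(1+|\ep(\bfv)|)^{q-2}\ep(\bfv)$ with $q$ large enough. The resulting SDE in $X_N$ has bounded, locally monotone and coercive coefficients, so pathwise solutions exist by the Pardoux--Krylov--Rozovskii theory for monotone SPDEs. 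Applying It\^o's formula to $\tfrac12\|\bfv^{N,\varepsilon}\|_{L^2}^2$, using \eqref{eq:phi} on $\Phi$ together with the Burkholder--Davis--Gundy inequality, I obtain uniform bounds
\begin{align*}
\E\sup_{t\leq T}\big\|\bfv^{N,\varepsilon}(t)\big\|_{L^2}^\beta+\E\bigg(\int_0^T\|\ep(\bfv^{N,\varepsilon})\|_{L^p}^p\dt\bigg)^{\beta/p}\leq c,
\end{align*}
with $\beta$ as in \eqref{initial}.

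Next I establish tightness of the joint laws of $(\bfv^{N,\varepsilon},\bfW,\bff,\bfv_0)$ on a path space such as $(L^p(0,T;L^p(G)),\mathrm{weak})\times C([0,T];U_0)\times L^2(\Q)\times L^2(G)$. Compactness in time comes from fractional Sobolev estimates $\E\|\bfv^{N,\varepsilon}\|_{W^{\alpha,2}(0,T;(W^{1,p}_{0,\Div}(G))^\ast)}^2\leq c$, whose stochastic contribution is controlled by BDG. Jakubowski's extension of Skorokhod's representation theorem (needed since the relevant topologies are not metric) then yields a new stochastic basis on which a subsequence $(\tilde\bfv^{N,\varepsilon},\tilde\bfW^{N,\varepsilon},\tilde\bff^{N,\varepsilon},\tilde\bfv_0^{N,\varepsilon})$ converges almost surely to a limit $(\tilde\bfv,\tilde\bfW,\tilde\bff,\tilde\bfv_0)$. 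A standard martingale characterisation shows that $\tilde\bfW$ is a cylindrical Wiener process with respect to the canonical filtration and that the new variables solve the corresponding approximate equation.

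The main obstacle is identification of the weak limit $\overline{\bfS}$ of $\bfS(\ep(\tilde\bfv^{N,\varepsilon}))$. Since $p>\tfrac{2d+2}{d+2}$ gives only $(\nabla\bfv)\bfv\in L^1$ and not $\bfv\otimes\bfv\in L^1$, test functions must lie in $L^\infty$, which forces Wolf's $L^\infty$-truncation: the difference $\tilde\bfv^{N,\varepsilon}-\tilde\bfv$ is replaced by a bounded field $(\tilde\bfv^{N,\varepsilon}-\tilde\bfv)_\lambda$ coinciding with it on a set whose complement has measure controlled by a negative power of $\lambda$. In the stochastic setting this truncation cannot be inserted as a test function directly; instead It\^o's formula is applied to a suitable regularisation of $\tfrac12\|\tilde\bfv^{N,\varepsilon}-\tilde\bfv\|_{L^2}^2$, after a time mollification that absorbs the It\^o correction coming from the stochastic integrals. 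The harmonic pressure decomposition of Section~3 — including the stochastic pressure component generated by $\Phi(\tilde\bfv)\,\dd\tilde\bfW$ — is crucial in localising the argument and showing that the terms supported on the truncation set vanish as first $N\to\infty$ and then $\lambda\to\infty$. A Minty-type argument combined with the monotonicity of $\bfS$ then identifies $\overline{\bfS}=\bfS(\ep(\tilde\bfv))$.

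Finally, passage to the limit in the stochastic integral $\int_0^t\Phi(\tilde\bfv^{N,\varepsilon})\,\dd\tilde\bfW^{N,\varepsilon}_\sigma\to\int_0^t\Phi(\tilde\bfv)\,\dd\tilde\bfW_\sigma$ follows from the almost sure convergence together with \eqref{eq:phi2} and the uniform moment bounds, by a standard Debussche--Glatt-Holtz--Temam-type argument. Sending $\varepsilon\to 0$ after $N\to\infty$ removes the stabilisation and produces the desired martingale weak solution of Definition~\ref{def:weak}.
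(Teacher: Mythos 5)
Your proposal captures the paper's high-level architecture correctly: Galerkin with Stokes eigenfunctions, an a priori stabilisation, It\^o on the energy plus BDG for moment bounds, tightness via fractional Sobolev estimates, Jakubowski--Skorokhod to pass to a new stochastic basis, and finally $L^\infty$-truncation with monotone operator theory. Two of your choices are genuinely different but still serviceable: the paper stabilises with a zero-order absorption $\alpha|\bfv|^{q-2}\bfv$ rather than a $q$-Laplacian type viscous term, and it does the Galerkin passage and the $m\to\infty$ passage as two separate Skorokhod arguments (with the probability space fixed once and for all across $\alpha$ via the canonical $[0,1)$ choice, Remark~\ref{rem:ps}).

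The part that is genuinely wrong, and where the whole difficulty of the stochastic setting lives, is your description of how the $L^\infty$-truncation enters. You propose applying It\^o's formula ``to a suitable regularisation of $\tfrac12\|\tilde\bfv^{N,\varepsilon}-\tilde\bfv\|^2_{L^2}$, after a time mollification that absorbs the It\^o correction.'' Time-mollifying a stochastic integral destroys the martingale structure and does not make the quadratic-variation terms disappear; this route would stall. What actually works (and is the crux of the paper's Step 3) is to apply It\^o's formula \emph{directly} to the truncated energy functional
\begin{align*}
f_L(\bfu)=\int_G \eta\, H_L(\bfu)\dx,\qquad H_L(\bfxi)=h_L(|\bfxi|),\quad h_L(s)=\int_0^s\Psi_L(\theta)\theta\,\dd\theta,\quad \Psi_L=\sum_{\ell=1}^L\psi_{2^{-\ell}},
\end{align*}
so that $D_\bfxi H_L(\bfxi)=\Psi_L(|\bfxi|)\bfxi$ \emph{is} the $L^\infty$-truncation, while $|D^2 H_L|\leq c(L)$ keeps the It\^o correction terms under control. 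This requires a tailored infinite-dimensional It\^o lemma (the paper's Lemma~\ref{lems:Ito}) for $f(\bfu)=\int F(x,\bfu)$ with linear first-derivative and bounded second-derivative growth, which you would need to prove or cite. Moreover, the argument is carried out for a \emph{double} sequence $\underline{\bfu}^{m,k}=\underline{\bfu}^m-\underline{\bfu}^k$ with $\underline{\bfu}^m=\underline{\bfv}^m-\nabla\underline{\vartheta}^m_h$, i.e.\ after explicitly subtracting the harmonic pressure gradient; simply comparing $\tilde\bfv^{N,\varepsilon}$ with the limit $\tilde\bfv$ does not work because one cannot apply It\^o to the weak limit directly. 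Finally, the conclusion $\sigma_0=0$ is extracted not by sending a truncation level $\lambda\to\infty$ but via the layered sum $\Psi_L$, Cantor diagonalisation and the averaging estimate $0\leq\sigma_0\leq(\sigma_0+\cdots+\sigma_\ell)/\ell\leq K/\ell$. Your proposal names the right ingredients but not the mechanism that actually combines them; as written the identification of $\overline{\bfS}$ would not go through.
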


\begin{remark}
\begin{itemize}
\item By Theorem \ref{thm:main} we extend the results from \cite{Wo} to the stochastic fashion where we can consider arbitrary bounded Lipschitz domains and allow a nonlinear dependence between the solution and the stochastic perturbation. The bound $p>\tfrac{8}{5}$ (if $d=3$) includes a wide range of Non-Newtonian fluids.
\item It is not clear if it is possible to improve the result from Theorem \ref{thm:main} to $p>\frac{2d}{d+2}$ as in the deterministic case. The papers \cite{DRW} and \cite{BrDS} use the Lipschitz truncation method. Despite the $L^\infty$-truncation the Lipschitz truncation is not only nonlinear but also nonlocal (in space-time in the parabolic case). So it seems to be impossible to perform the testing with it via It\^{o}'s formula.
\item Condition \eqref{initial} ensures the existence of higher moments for initial datum and right-hand-side. 
\end{itemize}
\end{remark}

\section{Pressure decomposition}
\label{sec:stochpi}
In this section we introduce the pressure and decompose it in way that every part of the pressure corresponds to one term in the equation. The following theorem generalizes \cite{Wo}[Thm. 2.6] to the stochastic case. 

\begin{theorem}\label{thm:pi}
$\left.\right.$\\
Let $(\Omega,\mathcal F,(\mathcal F_t),\p)$ be a stochastic basis, $\bfu\in L^2(\Omega,\F,\p;L^\infty(0,T;L^2(G)))$, $\bfH\in L^{s}(\Omega,\F,\p;L^{s}(\Q))$ for some $s>1$, both adapted to $(\mathcal F_t)$. Moreover, let $\bfu_0\in L^2(\Omega,\F_0,\p;L^2_{\Div}(G))$\\ and $\Phi \in L^2(\Omega,\F,\p;L^\infty(0,T;L_2(U,L^2(G))))$ progressively measurable such that
\begin{align*}
\int_G\bfu(t)\cdot\bfvarphi\dx +\int_0^t\int_G\bfH:\nabla\bfphi\dxs&=\int_G\bfu_0\cdot\bfvarphi\dx+\int_G\int_0^t\Phi\,\dd\bfW_\sigma\cdot \bfvarphi\dx
\end{align*}
holds for all $\bfphi\in C^\infty_{0,\Div}(G)$. 
Then there are functions $\pi_\bfH$, $\pi_h$ and $\pi_\Phi$ adapted to $(\mathcal F_t)$ such that 
\begin{enumerate}
\item[a)] We have $\Delta \pi_h=0$ and there holds for $\chi:=\min\left\{2,s\right\}$
\begin{align*}
 \E\bigg[\int_{\Q}|\pi_\bfH|^s\dxt\bigg]&\leq c\,\E\bigg[ \int_{\Q}|\bfH|^s\dxt\bigg],\\
\E\bigg[\sup_{(0,T)}\int_{G}|\pi_\Phi|^2\dx\bigg]
&\leq c\,\E\bigg[\sup_{(0,T)}\|\Phi\|_{L_2(U,L^2(G))}^2\bigg],\\
\E\bigg[\sup_{(0,T)}\int_{G}|\pi_h|^\chi\dx\bigg]
&\leq c\,\E\bigg[1+\sup_{(0,T)}\int_{G}|\bfu|^2\dx+\sup_{(0,T)}\|\Phi\|_{L_2(U,L^2(G))}^2 \bigg]\\ &+c\,\E\bigg[\int_{G}|\bfu_0|^2\dx+\int_{\Q}|\bfH|^s\dxt\bigg].
\end{align*}
\item[b)] There holds
\begin{align*}
\int_G&\big(\bfu(t)-\nabla\pi_h(t)\big)\cdot\bfvarphi\dx +\int_0^t\int_G\bfH:\nabla\bfphi\dxs-\int_0^t\int_G\pi_\bfH\,\Div\bfphi\dxs\\&=\int_G\bfu_0\cdot\bfvarphi\dx+\int_G\pi_\Phi(t)\,\Div\bfphi\dx+\int_G\int_0^t\Phi\,\dd\bfW_\sigma\cdot \bfvarphi\dx
\end{align*}
for all $\bfphi\in C^\infty_{0}(G)$. Moreover, we have $\pi_h(0)=\pi_\bfH(0)=\pi_\Phi(0)=0$ $\p$-a.s.
\end{enumerate}
\end{theorem}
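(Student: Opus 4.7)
The strategy is to upgrade Wolf's deterministic pressure decomposition \cite{Wo}[Thm.~2.6] to the stochastic setting by carrying out the construction pathwise in $\omega$ and then checking that the resulting fields are adapted with the claimed moment bounds. The guiding principle is that every term on either side of the divergence-free identity contributes, via integration by parts against a gradient test function $\nabla q$, to a scalar ``pressure'' obtained by solving a Poisson problem; linearity of the elliptic solver transfers both adaptedness and integrability to the pressure.

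\medskip

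First, I would define $\pi_\bfH(t)$, for $\p$-a.e.\ $\omega$ and every $t\in[0,T]$, as the unique zero-mean solution in $L^s(G)$ of $-\Delta \pi_\bfH(t)=-\Div\Div\bfH(t)$ in $G$, concretely by the weak formulation
\begin{equation*}
\int_G \pi_\bfH(t)\,\Delta q\dx=\int_G \bfH(t):\nabla^2 q\dx
\end{equation*}
on a suitable class of test functions $q$. Calder\'on--Zygmund theory yields $\|\pi_\bfH(t)\|_{L^s(G)}\leq c\|\bfH(t)\|_{L^s(G)}$, and integrating in $t$ and taking expectation gives the first bound. Similarly, $\pi_\Phi(t)$ is the output of the \emph{same} linear elliptic solver applied to the $L^2(G)$-valued continuous martingale $M(t):=\int_0^t \Phi\,\dd\bfW_\sigma$; by linearity $\pi_\Phi$ is itself an $L^2(G)$-valued continuous $(\mathcal F_t)$-martingale, and Doob's maximal inequality combined with the It\^{o} isometry gives the second bound. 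Finally, $\pi_h(t)$ is the unique harmonic function in $G$ (so $\Delta \pi_h=0$) whose gradient corrects the tangential/boundary mismatch of the first two pressures, arranged so that identity (b) holds for every $\bfphi\in C^\infty_0(G)$; its integrability follows from $L^\chi$-estimates for harmonic functions whose data are controlled by $\bfu$, $\bfu_0$, $\pi_\bfH$ and $\pi_\Phi$, with the exponent $\chi=\min\{2,s\}$ appearing precisely because $\pi_h$ must simultaneously accommodate an $L^s$-datum (from $\bfH$) and $L^2$-data (from $\bfu$, $\bfu_0$ and $\Phi$).

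\medskip

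Identity (b) is then verified $\omega$-by-$\omega$: any $\bfphi\in C^\infty_0(G)$ admits a Helmholtz decomposition $\bfphi=\bfphi_\sigma+\nabla q$ with $\bfphi_\sigma\in C^\infty_{0,\Div}(G)$. The hypothesis, applied to $\bfphi_\sigma$, handles the solenoidal part, whereas the defining weak equations of $\pi_\bfH$, $\pi_\Phi$ and $\pi_h$ match the gradient part $\nabla q$ exactly, so (b) follows by linearity. Since each of $\pi_\bfH$, $\pi_\Phi$, $\pi_h$ is a deterministic linear image of an adapted process, progressive measurability is automatic. The initial conditions $\pi_\bfH(0)=\pi_\Phi(0)=\pi_h(0)=0$ come from $\bfH(0)=0$, $M(0)=0$, and the vanishing of the harmonic remainder at $t=0$.

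\medskip

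The main technical obstacle is the stochastic pressure $\pi_\Phi$. Although the deterministic pressure decomposition is by now standard, we must verify that the pathwise elliptic construction, applied to the $L^2(G)$-valued continuous martingale $M(t)$, produces a process that is itself an $L^2(G)$-valued continuous $(\mathcal F_t)$-martingale---this is needed both for the maximal bound stated in (a) and so that $\pi_\Phi$ can later be manipulated inside It\^{o}'s formula. Once one knows that the elliptic solver is a bounded linear operator on $L^2(G)$ and hence commutes with conditional expectations, the martingale property of $\pi_\Phi$ follows; Doob's $L^2$-inequality then delivers the stated estimate, and the remainder is bookkeeping that parallels \cite{Wo}.
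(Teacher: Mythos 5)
Your plan has the right shape (pathwise construction, three‐part decomposition, Helmholtz split in the verification of (b), martingale structure of the stochastic part), and it is in the same spirit as the paper. However there are two genuine gaps that stop the argument from being made rigorous in a bounded Lipschitz domain.

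First, you never say how the decomposition is made well-posed. The paper first invokes De Rham's theorem to produce a \emph{single} pressure $\pi(t)\in L^\chi_0(G)$, controls it in $L^\chi(\Omega;L^\infty_t L^\chi_x)$ by writing $\pi(t)=\mathcal B^\ast(\bfu(t)-\bfu_0)-\int_0^t(\nabla\mathcal B)^\ast\bfH\,\dd\sigma+\int_0^t\mathcal B^\ast\Phi\,\dd\bfW_\sigma$ with the \Bogovskii\ operator $\mathcal B$ (BDG for the martingale part), and only then splits $\pi=\pi_0+\pi_h$ with $\pi_0:=\Delta\Delta^{-2}_G\Delta\pi$ and $\pi_h:=\pi-\pi_0$. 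The projection $\Delta\Delta^{-2}_G\Delta:L^\chi(G)\to\Delta W^{2,\chi}_0(G)$ (continuous on $L^\chi$ of a bounded Lipschitz domain, M\"uller) is the device that simultaneously (i) makes $\pi_h$ harmonic with the claimed $L^\infty_t L^\chi_x$ bound, and (ii) pins down $\pi_\bfH,\,\pi_\Phi$ \emph{uniquely} as elements of $\Delta W^{2,s}_0(G)$, resp.\ $\Delta W^{2,\chi}_0(G)$. Your proposal replaces this with ``zero-mean Poisson solve via Calder\'on--Zygmund'' for $\pi_\bfH$, and with ``the unique harmonic function whose gradient corrects the boundary mismatch'' for $\pi_h$. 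The former is not the normalization that makes the problem uniquely solvable in $G$ and CZ theory by itself does not give the $L^s$ estimate on a general Lipschitz domain (this is exactly why \cite{Mul}'s bi-Laplace regularity is cited); the latter is circular, because the existence and $L^\chi$ control of such a corrector is precisely what needs to be proved. Without the $\Delta W^{2,\cdot}_0$-normalization and the a priori bound on the total $\pi$, you cannot close the estimate for $\pi_h$.

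Second, the claim that the initial conditions follow ``from $\bfH(0)=0$'' is false -- no such assumption is available. In the paper $\pi_\bfH$ enters (b) only through $\int_0^t\pi_\bfH\,\dd\sigma$ so its value at $t=0$ is immaterial, $\pi_\Phi(0)=0$ because $\pi_\Phi(t)=\int_0^t\mathcal D^\ast\Phi\,\dd\bfW_\sigma$ with $\mathcal D=\nabla\Delta^{-2}_G\Delta$ is a stochastic integral starting from zero, and $\pi_h(0)=0$ because the de Rham pressure is normalized by $\pi(0)=0$ and $\pi_0(0)=0$. Once you insert the missing bi-Laplace decomposition and the \Bogovskii\ representation of $\pi$, the rest of your outline (Helmholtz split for (b), Doob/BDG for $\pi_\Phi$, linearity for adaptedness) matches the paper's argument.
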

\begin{remark}
If we put the pressure terms together by
\begin{align*}
\pi(t)=\pi_h(t)+\pi_\Phi(t)+\int_0^t\pi_\bfH\ds
\end{align*}
then there holds $\pi\in L^\chi(\Omega,\F,\p; L^\infty(0,T;L^\chi(G)))$.
\end{remark}
\begin{proof}
Let $\bfu$ be a weak solution to
\begin{align*}
\int_G\bfu(t)\cdot\bfvarphi\dx +\int_0^t\int_G\bfH:\nabla\bfphi\dxs&=\int_G\bfu_0\cdot\bfvarphi\dx+\int_0^t\int_G\Phi\,\dd\bfW_\sigma\cdot \bfvarphi\dx
\end{align*}
for all $\bfvarphi\in W^{1,\chi'}_{0,\Div}(G)$. Then there is a unique function $\pi(t)\in L^{\chi}_0(G)$ with $\pi(0)=0$ such that
\begin{align*}
\int_G\bfu(t)\cdot\bfvarphi\dx &+\int_0^t\int_G\bfH:\nabla\bfphi\dxs\\&=\int_G\pi(t)\,\Div\bfphi\dx+\int_G\bfu_0\cdot\bfvarphi\dx+\int_G\int_0^t\Phi\,\dd\bfW_\sigma\cdot \bfvarphi\dx
\end{align*}
for all $\bfvarphi\in W^{1,\chi'}_{0}(G)$. This is a consequence of the well-known Theorem by De Rahm. We will show
\begin{align}\label{eq:piinfty}
\pi\in L^\chi(\Omega,\F,\p; L^\infty(0,T;L^\chi(G))).
\end{align}
The measurability of $\pi$ follows from the equation. For the boundedness we write the equation as
\begin{align*}
\int_G\pi(t)\,\varphi\dx &=\int_G\big(\bfu(t)-\bfu_0\big)\cdot\mathcal B(\varphi)\dx-\int_0^t\int_G\bfH:\nabla \mathcal B(\varphi)\dxs\\
&+\int_G\int_0^t\Phi\,\dd\bfW_\sigma\cdot \mathcal B(\varphi)\dx,\quad \mathcal B(\varphi):=\Bog_G\big(\varphi-(\varphi)_G\big),
\end{align*}
for all $\varphi\in C^\infty_0(G)$ with the \Bogovskii -operator $\Bog_G$ (see \cite{Bog}). Here $(\varphi)_G$ denotes the mean value of the function $\varphi$ over $G$. This yields
\begin{align*}
\pi(t)&=\mathcal B^\ast\big(\bfu(t)-\bfu_0\big)-\int_0^t\big(\nabla\mathcal B\big)^\ast\bfH\ds
+\int_0^t\mathcal B^\ast\Phi\,\dd\bfW_\sigma,
\end{align*}
where $\mathcal B^\ast$ denotes the adjoint of $\mathcal B$ with respect to the $L^2(G)$ inner product.
Using continuity of $\mathcal B^{*}$ from $L^2(G)$ to $L^2(G)$ and $\big(\nabla\mathcal B\big)^{*}$ from $L^s(G)$ to $L^s(G)$ (which follows from the properties of $\Bog_G$) we have
we have
\begin{align}\label{eq:inftypi}
\E\bigg[\sup_{(0,T)}\int_{G}|\pi|^\chi\dx\bigg]
&\leq c\,\E\bigg[\sup_{(0,T)}\int_{G}|\bfu|^2\dx+\int_{G}|\bfu_0|^2\dx+\int_0^T\|\Phi\|_{L_2(U,L^2(G))}^2\dt\bigg]\nonumber\\
&+c\,\E\bigg[1+ \int_{\Q}|\bfH|^s\dxt\bigg],
\end{align}
and so \eqref{eq:piinfty} holds.
Note that the estimate of the stochastic integral is a consequence of the infinite-dimensional Burgholder-Davis-Gundi inequality (and the continuity of $\mathcal B^{*}$ on $L^2(G)$).\\
We decompose pointwise on $\Omega\times(0,T)$
\begin{align*}
\pi&=\pi_0+\pi_h,\\
\pi_0&:=\Delta \Delta^{-2}_G\Delta \pi,\quad \pi_h:=\pi-\pi_0.
\end{align*}
Here $\Delta^{-2}_G$ denotes the solution operator to the bi-Laplace equation w.r.t. zero boundary values for function and gradient. Since the operator $\Delta \Delta^{-2}_G\Delta$ is continuous from $L^\chi(G)$ to $L^\chi(G)$ (see \cite{Mul}) inequality (\ref{eq:inftypi}) holds true if $\pi$ is replaced by $\pi_0$ or $\pi_h$.
We gain for all $\varphi\in C^\infty_0(G)$
\begin{align}\label{eq:pi0}
\int_G\pi_0(t)\,\Delta\phi\dx=-\int_G\int_0^t\bfH:\nabla^2\phi\dxs+\int_G\int_0^t\Phi\,\dd\bfW_\sigma\cdot \nabla\varphi\dx.
\end{align}
Note that $\pi_0(t)\in \Delta W^{2,\chi}_0(G)$ is uniquely determined as the solution to the equation above.\\
There is a function $\pi_\bfH\in\Delta W_0^{2,q}(G)$ such that
\begin{align*}
\int_G\pi_\bfH(t)\,\Delta\phi\dx=-\int_G\bfH:\nabla^2\phi\dx
\end{align*}
for all $\phi\in C^\infty_0(G)$. The measurability of $\pi_\bfH$ follows from the measurability of the r.h.s. and we have on account of the solvability of the bi-Laplace equation (see \cite{Mul}) 
\begin{align*}
 \int_G|\pi_\bfH|^s\dx\leq c \int_G|\bfH|^s\dx\quad\p\otimes \LL^1-a.e.
\end{align*}
which implies
\begin{align*}
 \int_{\Omega\times \Q}|\pi_\bfH|^s\dxt\,\dd\p\leq c \int_{\Omega\times \Q}|\bfH|^s\dxt\,\dd\p.
\end{align*}
Moreover, we define $\pi_\Phi(t):=\pi_0(t)-\int_0^t\pi_\bfH\ds\in\Delta W_0^{2,\chi}(G)$. This is the unique solution to
\begin{align}\label{eq:pi2}
\int_G\pi_\Phi(t)\,\Delta\phi\dx&=\int_G\int_0^t\Phi\,\dd\bfW_\sigma\cdot \nabla\varphi\dx,\quad\varphi\in C^\infty_0(G).
\end{align}
Note that we can write (\ref{eq:pi2}) as
\begin{align*}
\int_G\pi_\Phi(t)\,\phi\dx&=\int_G\int_0^t\Phi\,\dd\bfW_\sigma\cdot\nabla\big(\Delta^{-2}\Delta\varphi\big)\dx
\end{align*}
for all $\varphi\in C^\infty_0(G)$ since $\pi_\Phi(t)\in \Delta W^{2,\chi}_0(G)$. 
Introducing the operator $\mathcal D:=\nabla\Delta^{-2}_G\Delta:L^2(G)\rightarrow W_0^{1,2}(G)$ we have
\begin{align}\label{eq:DD0}
\mathcal D,\mathcal D^{*}:L^2(G)\rightarrow L^2(G)
\end{align}
and hence $\p\times \mathcal L^{d+1}$-a.e.
$\pi_\Phi(t)=\int_0^t\mathcal D^\ast\Phi\,\dd\bfW_\sigma$.
This yields by the infinite-dimensional Burgholder-Davis-Gundi inequality and \eqref{eq:DD0}
\begin{align*}
\E\bigg[\sup_{(0,T)}\int_G|\pi_\Phi|^2\dx\bigg]&\leq\,c\,\E\bigg[\sup_{(0,T)}\|\mathcal D^\ast\Phi\|_{L_2(U,L^2(G))}^2 \bigg]\leq\,c\,\E\bigg[\sup_{(0,T)}\|\Phi\|_{L_2(U,L^2(G))}^2 \bigg].
\end{align*}
Finally, we see that $\tilde{\pi}_0(t):=\pi_\Phi(t)+\int_0^t\pi_\bfH\ds$ solves (\ref{eq:pi0}) and there holds $\tilde{\pi}_0(t)\in \Delta W^{2,\chi}_0(G)$ which implies
\begin{align*}
\pi_0(t)=\pi_\Phi(t)+\int_0^t\pi_\bfH\ds.
\end{align*}
Therefore we have the equation claimed in b).
\end{proof}

\begin{corollary}\label{cor:neu}
$\left.\right.$\\
Let the assumptions of Theorem \ref{thm:pi} be satisfied. There is $\Phi_\pi \in L^2(\Omega,\F,\p;L^\infty(0,T;L_2(U,L^2_{loc}(G))))$ progressively measurable
such that
\begin{align*}
\int_G\pi_\Phi(t)\,\Div\bfphi\dx&=\int_G\int_0^t\Phi_\pi\,\dd\bfW_\sigma\,\cdot\bfvarphi\dx,\quad\bfvarphi\in C^\infty_0(G).
\end{align*}
Let $G'\Subset G$, then $\Phi_\pi$ satisfies $\|\Phi_\pi\bfe_k\|_{L^2(G')}\leq \,c(G')\,\|\Phi\bfe_k\|_{L^2(G)}$ for all $k$, i.e. we have $\p\otimes\mathcal L^1$-a.e.
\begin{align*}
\|\Phi_\pi\|_{L_2(U,L^2(G'))}\leq \,c(G')\,\|\Phi\|_{L_2(U,L^2(G))}.
\end{align*}
If we assume that $\Phi$ satisfies (\ref{eq:phi}) then there holds
\begin{align*}
\|\Phi_\pi(\bfu_1)-\Phi_\pi(\bfu_2)\|_{L_2(U,L^2(G'))}\leq \,c(G')\,\|\bfu_1-\bfu_2\|_{L^2(G)}
\end{align*}
for all $\bfu_1,\bfu_2\in L^2(G)$.
\end{corollary}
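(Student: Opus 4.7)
The plan is to read off $\Phi_\pi$ from the representation $\pi_\Phi(t)=\int_0^t\mathcal D^{\ast}\Phi\,\dd\bfW_\sigma$ obtained in the proof of Theorem~\ref{thm:pi}. Unwinding the adjoint against test functions in $W^{2,2}_0(G)$ identifies $\mathcal D^{\ast}\bfg=-\Delta\Delta_G^{-2}\Div\bfg$, so that, writing $g_k=\Phi\bfe_k$ and letting $u_k:=\Delta_G^{-2}\Div g_k\in W^{2,2}_0(G)$ be the unique biharmonic potential with $\Delta^2 u_k=\Div g_k$ and vanishing Dirichlet data on $u_k$ and $\nabla u_k$, I would set mode-by-mode
\begin{align*}
\Phi_\pi\bfe_k:=-\nabla\big(\mathcal D^{\ast}\Phi\bfe_k\big)=\nabla\Delta u_k.
\end{align*}
Progressive measurability of $\Phi_\pi$ is then immediate from the fact that $\Phi\mapsto\Phi_\pi$ is a deterministic bounded linear map.

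First I would establish the local bound via standard interior ADN-type regularity for the bi-Laplace with right-hand side in $W^{-1,2}(G)$: for every $G'\Subset G$ there exists $c(G')>0$ with $\|\nabla\Delta u_k\|_{L^2(G')}\le c(G')\,\|g_k\|_{L^2(G)}$. Squaring and summing in $k\in\N$ yields the mode-by-mode inequality $\|\Phi_\pi\bfe_k\|_{L^2(G')}\le c(G')\,\|\Phi\bfe_k\|_{L^2(G)}$ as well as $\|\Phi_\pi\|_{L_2(U,L^2(G'))}\le c(G')\,\|\Phi\|_{L_2(U,L^2(G))}$ pointwise on $\Omega\times(0,T)$.

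For the identity, I would fix $\bfphi\in C^\infty_{0}(G)$, choose $G'\Subset G$ with $\mathrm{supp}\,\bfphi\subset G'$, and combine the stochastic Fubini used in the proof of Theorem~\ref{thm:pi} with an interior integration by parts (permissible since by the interior estimate $\mathcal D^{\ast}\Phi$ admits a distributional gradient in $L^2(G')$):
\begin{align*}
\int_G\pi_\Phi(t)\,\Div\bfphi\dx&=\int_G\bigg(\int_0^t\mathcal D^{\ast}\Phi\,\dd\bfW_\sigma\bigg)\Div\bfphi\dx\\
&=-\int_G\bigg(\int_0^t\nabla(\mathcal D^{\ast}\Phi)\,\dd\bfW_\sigma\bigg)\cdot\bfphi\dx,
\end{align*}
which is precisely $\int_G\int_0^t\Phi_\pi\,\dd\bfW_\sigma\cdot\bfphi\dx$.

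Finally, the Lipschitz bound under \eqref{eq:phi} follows because the construction is linear in $\Phi$, so $(\Phi_\pi(\bfu_1)-\Phi_\pi(\bfu_2))\bfe_k=\nabla\Delta\Delta_G^{-2}\Div\big(g_k(\bfu_1)-g_k(\bfu_2)\big)$, and the interior estimate reduces matters to
\begin{align*}
\sum_{k\in\N}\|g_k(\bfu_1)-g_k(\bfu_2)\|_{L^2(G)}^2\le L\,\|\bfu_1-\bfu_2\|_{L^2(G)}^2.
\end{align*}
This last step I would obtain by writing $g_k(\bfu_1(x))-g_k(\bfu_2(x))=\int_0^1\nabla g_k(s\bfu_1(x)+(1-s)\bfu_2(x))\cdot(\bfu_1(x)-\bfu_2(x))\,\dd s$, using Cauchy--Schwarz in $s$, and invoking the pointwise bound $\sum_k|\nabla g_k(\bfxi)|^2\le L$ from \eqref{eq:phi}. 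I expect the main technical point to be precisely the interior $W^{3,2}$-regularity for the bi-Laplace with $W^{-1,2}$ right-hand side: this is what forces the conclusion to live only in $L^2_{loc}$ (since $u_k$ need not sit in $W^{3,2}$ up to the boundary) and pins down the $G'$-dependence of the constants.
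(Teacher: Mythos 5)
Your proof is correct and takes essentially the same route as the paper: both identify $\Phi_\pi=\nabla\Delta\Delta_G^{-2}\Div\Phi$ by transposing the operator $\nabla\Delta_G^{-2}\Delta\Div$ across the $L^2(G)$ pairing (you via $\pi_\Phi=\int_0^t\mathcal D^\ast\Phi\,\dd\bfW$ and an interior integration by parts, the paper via the weak formulation directly), and both then invoke interior regularity for the bi-Laplacian for the local $L^2$ bound, while the Lipschitz bound follows from linearity of $\Phi\mapsto\Phi_\pi$ and the mean-value-theorem/Cauchy--Schwarz argument with $\sum_k|\nabla g_k|^2\le L$. You simply spell out the interior ADN estimate and the fundamental-theorem-of-calculus step that the paper leaves as "a consequence of local regularity theory for the bi-Laplace equation."
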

\begin{proof}
From the proof of Theorem \ref{thm:pi} we gain for any $\bfvarphi\in C^\infty_0(G)$
\begin{align*}
\int_G\pi_\Phi(t)\,\Div\bfphi\dx&=\int_G\int_0^t\Phi\,\dd\bfW_\sigma\cdot\nabla\big(\Delta^{-2}\Delta\Div\bfvarphi\big)\dx\\
&=\sum_k\int_G\int_0^t\Phi\bfe_k\,\dd\beta_k\cdot\nabla\big(\Delta^{-2}\Delta\Div\bfvarphi\big)\dx\\
&=\sum_k\int_G\int_0^t\nabla\Delta\Delta^{-2}\Div\Phi\bfe_k\,\dd\beta_k\cdot\bfvarphi\dx\\
&=\int_G\int_0^t\nabla\Delta\Delta^{-2}\Div\Phi\,\dd\bfW_\sigma\cdot\bfvarphi\dx.
\end{align*}
This yields the first claim by setting $\Phi_\pi=\nabla\Delta\Delta^{-2}\Div\Phi$. The rest is a consequence of local regularity theory for the bi-Laplace equation.
\end{proof}

\begin{remark}
If the boundary of $G$ is smooth then the statement of Corollary \ref{cor:neu} holds globally (i.e. we can replace $G'$ by $G$). In this case the operator
$\nabla\Delta\Delta^{-2}\Div$ is continuous on $L^2(G)$ (see \cite{CS1}, section 2.2, or \cite{CS2}).
\end{remark}

\begin{corollary}\label{cor:pir}
$\left.\right.$\\
Let the assumptions of Theorem \ref{thm:pi} be satisfied. Then we have for all $\beta\in[1,\infty)$
\begin{align*}
\E\bigg[\sup_{(0,T)}\int_{G}|\pi_h|^\chi\dx\bigg]^\beta
\leq c\,\E&\bigg[\sup_{(0,T)}\int_{G}|\bfu|^2\dx+\sup_{(0,T)}\|\Phi\|_{L_2(U,L^2(G))}^2\bigg]^\beta\\&+c\,\E\bigg[1+\int_{G}|\bfu_0|^2\dx+\int_{\Q}|\bfH|^s\dxt\bigg]^\beta
\end{align*}
provided the r.h.s. is finite.
\end{corollary}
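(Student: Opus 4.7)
The plan is to revisit the representation of $\pi$ derived in the proof of Theorem~\ref{thm:pi},
\[
\pi(t)=\mathcal B^{*}\bfu(t)-\mathcal B^{*}\bfu_0-\int_0^t(\nabla\mathcal B)^{*}\bfH\ds+\int_0^t\mathcal B^{*}\Phi\,\dd\bfW_\sigma,
\]
and run the same chain of estimates, but with $\beta$-th moment exponents throughout. Since $\pi_h=(I-\Delta\Delta^{-2}_G\Delta)\pi$ and the projector $I-\Delta\Delta^{-2}_G\Delta$ is continuous on $L^\chi(G)$ by \cite{Mul}, it suffices to bound $\E\bigl[\sup_{(0,T)}\|\pi(t)\|_{L^\chi(G)}^{\chi\beta}\bigr]$ by the right-hand side stated in the corollary.

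The three deterministic parts of $\pi$ would be handled by exactly the continuity properties of $\mathcal B^{*}$ on $L^2(G)$ and of $(\nabla\mathcal B)^{*}$ on $L^s(G)$ already used in Theorem~\ref{thm:pi}, combined with the embeddings $L^2(G),L^s(G)\hookrightarrow L^\chi(G)$ (valid since $\chi=\min\{2,s\}$ and $G$ is bounded) and H\"older's inequality in time for the $\bfH$-term, i.e. $\int_0^T\|\bfH(\sigma)\|_{L^s(G)}\ds\leq T^{1-1/s}\bigl(\int_\Q|\bfH|^s\dxt\bigr)^{1/s}$. After raising the pointwise inequality $\|\pi(t)\|_{L^\chi(G)}\leq c(\cdots)$ to the $\chi$-th power, taking the supremum in $t$, then the $\beta$-th power, applying $(a_1+\dots+a_n)^\beta\leq c\sum a_i^\beta$, and finally expectation, the contributions from $\mathcal B^{*}\bfu$, $\mathcal B^{*}\bfu_0$ and $\int_0^t(\nabla\mathcal B)^{*}\bfH\ds$ are absorbed into the right-hand side. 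The arithmetic that makes this work is that $\chi\beta/2\leq\beta$ and $\chi\beta/s\leq\beta$, so that Young's inequality replaces every sub-$\beta$ exponent by $\beta$ plus an additive constant, matching the stated bound.

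For the stochastic piece I would invoke the infinite-dimensional Burkholder--Davis--Gundy inequality in $L^2(G)$ with exponent $\chi\beta$, giving
\[
\E\bigg[\sup_{t\in(0,T)}\bigg\|\int_0^t\mathcal B^{*}\Phi\,\dd\bfW_\sigma\bigg\|_{L^2(G)}^{\chi\beta}\bigg]\leq c\,\E\bigg[\bigg(\int_0^T\|\mathcal B^{*}\Phi\|_{L_2(U,L^2(G))}^2\dt\bigg)^{\chi\beta/2}\bigg].
\]
Using continuity of $\mathcal B^{*}$ on $L^2(G)$, the trivial bound $\int_0^T\|\Phi\|_{L_2(U,L^2(G))}^2\dt\leq T\sup_{(0,T)}\|\Phi\|_{L_2(U,L^2(G))}^2$, the embedding $L^2(G)\hookrightarrow L^\chi(G)$, and once more $\chi\beta/2\leq\beta$, this term is controlled by $c\,\E\bigl[\sup_{(0,T)}\|\Phi\|_{L_2(U,L^2(G))}^2\bigr]^\beta$ up to an additive constant.

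The main obstacle is essentially combinatorial: keeping track of the three exponents $\chi$, $s$, $\beta$ so that every contribution lands on the right side with the correct $\beta$-th moment. No new probabilistic input beyond the higher-moment BDG is needed, and no new PDE ingredient beyond the continuity of $\mathcal B^{*}$, $(\nabla\mathcal B)^{*}$ and $I-\Delta\Delta^{-2}_G\Delta$ already established in the proof of Theorem~\ref{thm:pi}; the result is a direct quantitative refinement obtained by choosing the exponent $\chi\beta$ instead of $\chi$ or $2$ in the same estimates.
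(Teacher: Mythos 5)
Your proposal is correct and coincides with the paper's intended argument: the proof of Theorem~\ref{thm:pi} establishes the representation $\pi(t)=\mathcal B^{*}(\bfu(t)-\bfu_0)-\int_0^t(\nabla\mathcal B)^{*}\bfH\ds+\int_0^t\mathcal B^{*}\Phi\,\dd\bfW_\sigma$ together with the first-moment estimate via continuity of $\mathcal B^{*}$, $(\nabla\mathcal B)^{*}$, $\Delta\Delta_G^{-2}\Delta$ and the BDG inequality, and Corollary~\ref{cor:pir} is exactly that chain of estimates run with $\beta$-th moments (i.e.\ raising to the $\beta$-th power before taking expectations). Your exponent bookkeeping ($\chi\beta/2\le\beta$ and $\chi\beta/s\le\beta$ followed by Young's inequality to absorb the sub-$\beta$ powers) is the right observation to make each term land on the stated right-hand side.
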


\iffalse
\begin{proof}
As in the proof of Theorem \ref{thm:pi} we obtain (by taking the $\beta$-th power before building expectations)
\begin{align*}
\E\bigg[\sup_{(0,T)}\int_{G}|\pi|^\chi\dx\bigg]^\beta
&\leq c\,\E\bigg[\sup_{(0,T)}\int_{G}|\bfu|^2\dx+\int_{G}|\bfu_0|^2\dx+\sup_{(0,T)}\|\Phi\|_{L_2(U,L^2(G))}^2\bigg]^\beta\nonumber\\
&+c\,\E\bigg[1+ \int_{\Q}|\bfH|^s\dxt\bigg]^\beta+c\,\E\bigg[\sup_{(0,T)}|J(t)|\bigg]^\beta,
\end{align*}
as well as
\begin{align*}
\E\bigg[\sup_{(0,T)}|J(t)|\bigg]^\beta
&\leq c\,\E\bigg[\int_0^T\int_{G}|\pi|^\chi\dxt\bigg]^\beta+c\,\E\bigg[1+\sup_{(0,T)}\|\Phi\|_{L_2(U,L^2(G))}^2 \bigg]^\beta\\
&\leq c\,\E\bigg[\int_0^T\bigg(\int_{G}|\pi|^\chi\dx\bigg)^\beta\dt\bigg]+c\,\E\bigg[1+\sup_{(0,T)}\|\Phi\|_{L_2(U,L^2(G))}^2 \bigg]^\beta.
\end{align*}
Now, using Gronwall's Lemma, the claimed inequality holds for $\pi$. But since we have $\pi_h=\pi-\Delta\Delta_G^{-2}\Delta \pi$ we can replace $\pi$ by $\pi_h$.
\end{proof}
\fi

\begin{corollary}\label{cor:pi}
$\left.\right.$\\
Let the assumptions of Theorem \ref{thm:pi} be satisfied. Assume further that  we have the decomposition
\begin{align*}
\bfH=\bfH_1+\bfH_2,
\end{align*}
where $\bfH_1\in L^{s_1}(\Omega\times \Q,\p\otimes\mathcal L^{d+1})$ and $\bfH_2,\nabla\bfH_2\in L^{s_2}(\Omega\times \Q,\p\otimes\mathcal L^{d+1})$. Then we have 
$$\pi_\bfH=\pi_{1}+\pi_{2}$$
and it holds for all $\beta<\infty$ and all $G'\Subset G$
\begin{align*}
\E\bigg[ \int_{\Q}|\pi_{1}|^{s_1}\dxt\bigg]^\beta&\leq c\,\E\bigg[\int_{\Q}|\bfH_1|^{s_1}\dxt\bigg]^\beta,\\
\E\bigg[ \int_{\Q}|\pi_{2}|^{s_2}\dxt\bigg]^\beta&\leq c\,\E\bigg[\int_{\Q}|\bfH_2|^{s_2}\dxt\bigg]^\beta,\\\E\bigg[ \int_0^T\int_{G'}|\nabla\pi_{2}|^{s_2}\dxt\bigg]^\beta&\leq c\,\E\bigg[\int_{\Q}|\bfH_2|^{s_2}+|\nabla\bfH_2|^{s_2}\dxt\bigg]^\beta.
\end{align*}
\end{corollary}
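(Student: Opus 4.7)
The strategy is to exploit the linearity of the pressure reconstruction of Theorem~\ref{thm:pi} and then to extract the extra derivative of $\pi_2$ from interior regularity for the biharmonic operator.

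For $i=1,2$ I would apply the construction of $\pi_\bfH$ carried out in the proof of Theorem~\ref{thm:pi} with $\bfH$ replaced by $\bfH_i$, which gives unique $\pi_i(t)\in\Delta W_0^{2,s_i}(G)$ (pointwise on $\Omega\times(0,T)$) satisfying
\begin{align*}
\int_G\pi_i(t)\,\Delta\phi\dx=-\int_G\bfH_i:\nabla^2\phi\dx\qquad\text{for all }\phi\in C_0^\infty(G).
\end{align*}
Adding the two identities, the sum $\pi_1+\pi_2$ lies in $\Delta W_0^{2,\min\{s_1,s_2\}}(G)$ and satisfies the same equation as $\pi_\bfH$; uniqueness for the bi-Laplace problem then forces $\pi_\bfH=\pi_1+\pi_2$. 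The first two asserted moment estimates now follow by applying Theorem~\ref{thm:pi}\,(a) separately to $\pi_1$ and $\pi_2$ pointwise in $\omega$, raising the resulting bound to the $\beta$-th power, and integrating over $\Omega$.

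For the gradient estimate on $G'\Subset G$, I would represent $\pi_2=\Delta\Psi_2$ with $\Psi_2\in W_0^{2,s_2}(G)$ solving the biharmonic equation
\begin{align*}
\Delta^2\Psi_2=-\partial_i\partial_j(\bfH_2)_{ij}=-\partial_i\big((\Div\bfH_2)_i\big)
\end{align*}
distributionally with zero Dirichlet and Neumann boundary data. The decisive observation is that $\nabla\bfH_2\in L^{s_2}$ puts the right-hand side into the divergence of an $L^{s_2}$ vector field. Interior Calder\'on--Zygmund regularity for $\Delta^2$ (see \cite{Mul}) then upgrades $\Psi_2$ by one derivative on any subdomain $G'\Subset G$ and yields the pointwise inequality
\begin{align*}
\|\nabla\pi_2\|_{L^{s_2}(G')}=\|\nabla\Delta\Psi_2\|_{L^{s_2}(G')}\leq c(G')\big(\|\bfH_2\|_{L^{s_2}(G)}+\|\nabla\bfH_2\|_{L^{s_2}(G)}\big).
\end{align*}
Taking the $s_2$-th power, integrating over $(0,T)$, raising the resulting random variable to the $\beta$-th power and applying Fubini then gives the third inequality.

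The main obstacle is exactly this last step: one needs interior (not global) Calder\'on--Zygmund estimates for $\Delta^2$ written in divergence form, and the restriction $G'\Subset G$ is essential because $\Psi_2$ was constructed only with homogeneous boundary data on a Lipschitz domain, so a global bound would force one to impose smoothness of $\partial G$, just as already pointed out in the remark following Corollary~\ref{cor:neu}. The measurability of $\pi_1,\pi_2,\nabla\pi_2$ and the transfer from pointwise-in-$\omega$ bounds to $\beta$-th moments are routine, since all the underlying estimates are produced by bounded linear operators applied $\omega$-wise.
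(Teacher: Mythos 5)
Your proposal is correct and takes essentially the same route as the paper: define $\pi_1,\pi_2$ as the unique solutions in $\Delta W_0^{2,s_i}(G)$ of the bi-Laplace problem driven by $\bfH_1,\bfH_2$ respectively, identify $\pi_\bfH=\pi_1+\pi_2$ by uniqueness, obtain the first two bounds from the global $L^{s_i}$-continuity of the solution operator (raised to the $\beta$-th power $\omega$-wise before taking expectation), and derive the interior gradient bound from local elliptic regularity of $\Delta^2$ applied to $\Delta^2\Psi_2=-\Div\Div\bfH_2$. The paper's proof is a one-liner that defers to \cite{Wo}, Lemma 2.3, and it contains a typo (both displayed equations carry $\bfH_1$ on the right-hand side); your write-out supplies exactly the details that citation hides, including the correct driving term $\bfH_2$ in the second equation and the reason the gradient estimate is necessarily local on Lipschitz domains.
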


\begin{proof}
$\pi_{1}$ and $\pi_{2}$ are the unique solutions (defined $\p\otimes\LL^1$-a.e.) to
\begin{align*}
\int_G\pi_{1}(t)\,\Delta\phi\dx=-\int_G\bfH_1:\nabla^2\phi\dx,\\
\int_G\pi_{2}(t)\,\Delta\phi\dx=-\int_G\bfH_1:\nabla^2\phi\dx,
\end{align*}
in the spaces $\Delta W_0^{2,s_1}(G)$ and $\Delta W_0^{2,s_2}(G)$. This gives immediately the claimed estimates (see \cite{Wo}, Lemma 2.3, for more details).
\end{proof}

\section{The approximated system}
\label{sec:stochappr}
We stabilize the equation by adding a large power of the velocity. 
For $\alpha>0$ we study the system
\begin{align}\label{eq:stokes}
\begin{cases}\dd\bfv=&\Div\bfS(\ep(\bfv))\dt-\alpha\,|\bfv|^{q-2}\bfv\dt+\nabla\pi\dt\\&-\Div\big(\bfv\otimes\bfv\big)\dt+\bff\dt+\Phi(\bfv)\,\dd\bfW_t\\
\bfv(0)=&\bfv_0\end{cases},
\end{align}
depending on
the law $\Lambda_\bff$ on $L^2(\Q)$ and $\Lambda_0$ on $L^2_{\Div}(G)$. In fact, we fix some $\bff\in L^2(\Omega,\F,\p; L^2(\Q))$ with $\Lambda_\bff=\p\circ \bff^{-1}$ and some $\bfv_0\in L^2(\Omega,\F_0,\p;L^2_{\Div}(G))$ with $\Lambda_0=\p\circ \bfv_0^{-1}$. By enlarging the filtration $(\mathcal F_t)$ we can assume that $\bff$ is adapted to it.
We choose $q\geq\max\{2p',3\}$ thus the solution is also an admissible test function. We expect a solution $\bfv$ in the space
\begin{align*}
\mathcal V_{p,q}:=&L^2(\Omega,\F,\p;L^\infty(0,T;L^2(G)))\cap L^q(\Omega\times \Q;\p\otimes\mathcal L^{d+1})\\&\cap L^p(\Omega,\F,\p;L^p(0,T;W^{1,p}_{0,\Div}(G))).
\end{align*}
We will try to find a solution by separating space and time via a Galerking-Ansatz which yields an approximated solution by solving an ordinary stochastic differential equation.

There is a sequence $(\lambda_k)\subset\R$ and a sequence of functions $(\bfw_k)\subset W_{0,\Div}^{l,2}(G)$, $l\in\N$, such that\footnote{see \cite{MNRR}, appendix}
\begin{itemize}
\item[i)] $\bfw_k$ is an eigenvector to the eigenvalue $\lambda_k$ of the Stokes-operator in the sense that:
$$\langle \bfw_k,\bfphi\rangle_{W_0^{l,2}}= \lambda_k\int_G\bfw_k\cdot\bfvarphi\,dx\quad\text{for all }\bfvarphi\in W_{0,\Div}^{l,2}(G),$$
\item[ii)] $\int_{G}\bfw_k\bfw_m\,dx=\delta_{km}$ for all $k,m\in\mathbb{N}$,
\item[iii)] $1\leq\lambda_1\leq \lambda_2\leq...$ and $\lambda_k\rightarrow\infty$,
\item[iv)] $\langle\tfrac{\bfw_k}{\sqrt{\lambda_k}},\tfrac{\bfw_m}{\sqrt{\lambda_m}}\rangle_{W_0^{l,2}}=\delta_{km}$ for all $k,m\in\mathbb{N}$,
\item[v)] $(\bfw_k)$ is a basis of $W_{0,\Div}^{l,2}(G)$.
\end{itemize}
We choose $l>1+\frac{d}{2}$ such that $W_0^{l,2}(G)\hookrightarrow W^{1,\infty}(G)$.
We are looking for an approximated solution $\bfv^N$ of the form
\begin{align*}
\bfv^N=\sum_{k=1}^N c_i^N\bfw_k=\bfC^N\cdot\bfw^N,\quad \bfw^N=(\bfw_1,...,\bfw_N),
\end{align*}
where $\bfC^N=(c_i^N):\Omega\times (0,T)\rightarrow \R^N$. Therefore, we would like to solve the system ($k=1,...,N$)
\begin{align}
\int_G &\dd\bfv^N\cdot\bfw_k\dx+\int_{G}\bfS(\ep(\bfv^N)):\ep(\bfw_k)\dx\dt+\alpha\int_G|\bfv^N|^{q-2}\bfv^N\cdot\bfw_k\dxt\nonumber
\\&=\int_G\bfv^N\otimes\bfv^N:\nabla\bfw_k\dx\dt+\int_G\bff\cdot\bfw_k\dx\dt+\int_G \Phi(\bfv^N)\,\dd\bfW^N_\sigma\cdot\bfw_k\dx,\nonumber
\\
&\qquad\qquad\bfv^N(0)=\mathcal P^N\bfv_0.\label{eq:gal}
\end{align}
Here $\mathcal P^N:L^{2}_{\Div}(G)\rightarrow \mathcal X_N:=\mathrm{span}\left\{\bfw_1,...,\bfw_N\right\}$ is the orthogonal projection, i.e.
\begin{align*}
\mathcal P^N\bfu=\sum_{k=1}^N\langle \bfu,\bfw_k\rangle_{L^{2}}\bfw_k.
\end{align*} 
The equation above is to be understood $\mathbb P$ a.s. and for a.e. $t$ and we assume
\begin{align*}
\bfW^N(\sigma)=\sum_{k=1}^N\bfe_k \beta_k(\sigma)=\bfe^N\cdot \bfbeta^N(\sigma).
\end{align*}
It is equivalent to solving
\begin{align}\label{SDE*}
\begin{cases}\dd\bfC^N&=\big[\bfmu (t,\bfC^N)\big]\dt+\bfSigma (\bfC^N)\,\dd\bfbeta^N_t\\
\bfC^N(0)&=\bfC_0\end{cases}
\end{align}
with the abbreviations
\begin{align*}
\bfmu (\bfC^N)&=\bigg(-\int_{G}\bfS(\bfC^N\cdot\ep( \bfw^N)):\ep(\bfw_k)\dx+\int_{G}(\bfC^N\cdot\bfw^N)\otimes (\bfC^N\cdot\bfw^N):\nabla\bfw_k\dx\bigg)_{k=1}^N\\
&-\bigg(\alpha\int_{G}|\bfC^N\cdot\bfw^N|^{q-2}(\bfC^N\cdot\bfw^N)\cdot\bfw_k\dx\bigg)_{k=1}^N
+\bigg(\int_{G}\bff(t)\cdot\bfw_k\dx\bigg)_{k=1}^N,\\
\bfSigma(\bfC^N)&=\bigg(\int_G \Phi(\bfC^N\cdot\bfw^N)\bfe_l\cdot \bfw_k\dx\bigg)_{k,l=1}^N,\\
\bfC_0&=\Big(\langle\bfv_0,\bfw_k\rangle_{L^{2}(G)}\Big)_{k=1}^N.
\end{align*}
If both $\bfmu$ and $\bfSigma$ are globally Lipschitz continuous one could quote the classical existence theorems for SDEs from \cite{Ar} and \cite{Fr1,Fr2}. This is not given in our situation, so
we apply more recent results from \cite{PrRo}, Thm. 3.1.1. In the following we will check the assumptions. we have by the monotonicity of $\bfS$
\begin{align*}
\big(\bfmu(t,\bfC^N)&-\bfmu(t,\tilde{\bfC}^N)\big)\cdot\big(\bfC^N-\tilde{\bfC}^N\big)\\
&=-\int_G\big(\bfS(\ep(\bfv^N))-\bfS(\ep(\tilde{\bfv}^N))\big):\big(\ep(\bfv^N)-\ep(\tilde{\bfv}^N)\big)\dx\\
&+\int_G\big(\bfv^N\otimes \bfv^N-\tilde{\bfv}^N\otimes \tilde{\bfv}^N\big):\big(\ep(\bfv^N)-\ep(\tilde{\bfv}^N)\big)\dx\\
&\leq \int_G\big(\bfv^N\otimes \bfv^N-\tilde{\bfv}^N\otimes \tilde{\bfv}^N\big):\big(\ep(\bfv^N)-\ep(\tilde{\bfv}^N)\big)\dx.
\end{align*}
If $|\bfC^N|\leq R$ and $|\tilde{\bfC}^N|\leq R$ there holds
\begin{align*}
\big(\bfmu(t,\bfC^N)&-\bfmu(t,\tilde{\bfC}^N)\big)\cdot\big(\bfC^N-\tilde{\bfC}^N\big)
\leq c(R,N)|\bfC^N-\tilde{\bfC}^N|^2.
\end{align*}
Here we took into account boundedness of $\bfw_k$ and $\nabla\bfw_k$.
This implies weak monotonicity in the sense of \cite{PrRo}, (3.1.3) using Lipschitz continuity $\bfSigma$ in $\bfC^N$, cf. (\ref{eq:phi}).
On account of $\int_G \bfv^N\otimes\bfv^N:\ep(\bfv^N)\dx=0$ there holds
\begin{align*}
\bfmu(t,\bfC^N)\cdot\bfC^N&=-\int_G\bfS(\ep(\bfv^N)):(\ep(\bfv^N)\dx+\int_G\bff(t)\cdot\bfv^N\dx\leq c\,(1+\|\bff(t)\|_2\|\bfv^N\|_2)\\
&\leq (1+\|\bff(t)\|_2)(1+\|\bfv^N\|^2)\leq \,c\,(1+\|\bff(t)\|_2)(1+|\bfC^N|^2)
\end{align*}
So we have using the linear growth of $\bfSigma$ which follows from \ref{eq:phi}
\begin{align*}
\bfmu(\bfC^N)\cdot\bfC^N+|\bfSigma(\bfC^N)|^2\leq c(+\|\bfv^N\|_2^2)\big(1+|\bfC^N|^2\big).
\end{align*}
As the integral $\int_0^T (1+\|\bff(t)\|_2)\dt$ is finite $\p$-a.s. this yields weak coercivity in the sense of \cite{PrRo}, (3.1.4). We obtain a unique strong solution $\bfC^N\in L^2(\Omega,\F,\p;L^\infty(0,T))$ to the SDE (\ref{SDE*}).\\\

We obtain the following a priori estimate.

\begin{theorem}\label{thm:2.1}
Assume (\ref{1.4'}) with $p\in(1,\infty)$, (\ref{eq:phi}), $q\geq\{2p',3\}$ and
\begin{equation}\label{initial'}
\int_{L^2_{\Div}(G)}\big\|\bfu\big\|_{L^2(G)}^2\,\dd\Lambda_0(\bfu)<\infty,\quad
\int_{L^2(\Q)}\big\|\bfg\big\|_{L^2(\Q)}^2\,\dd\Lambda_\bff(\bfg)<\infty.
\end{equation}
Then there holds uniformly in $N$:
\begin{align*}
\E&\bigg[\sup_{t\in(0,T)}\int_G |\bfv^N(t)|^2\dx+\int_{\Q} |\nabla \bfv^N|^p\dxt+\alpha\int_{\Q} |\bfv^N|^q\dxt\bigg]\\&\leq c\,\bigg(1+
\int_{L^2_{\Div}(G)}\big\|\bfu\big\|_{L^2(G)}^2\,\dd\Lambda_0(\bfu)+
\int_{L^2(\Q)}\big\|\bfg\big\|_{L^2(\Q)}^2\,\dd\Lambda_\bff(\bfg)\bigg),
\end{align*}
where $c$ is independent of $\alpha$.
\end{theorem}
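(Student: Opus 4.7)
The plan is to apply Itô's formula to $F(\bfx)=|\bfx|^2$ for the finite-dimensional SDE \eqref{SDE*}, which after expanding $|\bfC^N|^2=\|\bfv^N\|_{L^2(G)}^2$ gives $\p$-a.s.
\begin{align*}
\|\bfv^N(t)\|_{L^2}^2 &= \|\mathcal P^N\bfv_0\|_{L^2}^2 + 2\int_0^t \bfC^N\cdot\bfmu(\sigma,\bfC^N)\ds \\
&\quad + 2\int_0^t \bfC^N\cdot\bfSigma(\bfC^N)\,\dd\bfbeta^N_\sigma + \int_0^t |\bfSigma(\bfC^N)|^2\ds.
\end{align*}
Since each $\bfw_k$ is divergence-free, the convective contribution vanishes via $\int_G(\bfv^N\otimes\bfv^N):\ep(\bfv^N)\dx=0$, so the drift reduces to
\begin{align*}
\bfC^N\cdot\bfmu(\sigma,\bfC^N) = -\int_G\bfS(\ep(\bfv^N)):\ep(\bfv^N)\dx - \alpha\int_G|\bfv^N|^q\dx + \int_G\bff\cdot\bfv^N\dx.
\end{align*}

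The key deterministic ingredients are then: the coercivity $(1+|A|)^{p-2}|A|^2 \geq c|A|^p - c$ together with Korn's inequality, which gives
$$\int_G\bfS(\ep(\bfv^N)):\ep(\bfv^N)\dx \geq c\,\|\nabla\bfv^N\|_{L^p(G)}^p - c;$$
the bound $\int_G\bff\cdot\bfv^N\dx \leq \tfrac12\|\bff\|_{L^2}^2 + \tfrac12\|\bfv^N\|_{L^2}^2$; and the Itô-correction estimate obtained from \eqref{eq:phi} and the $L^2$-stability of $\mathcal P^N$:
$$|\bfSigma(\bfC^N)|^2 = \sum_{k=1}^N\|\mathcal P^N[\Phi(\bfv^N)\bfe_k]\|_{L^2}^2 \leq \sum_k\int_G |g_k(\bfv^N)|^2\dx \leq c(1+\|\bfv^N\|_{L^2}^2).$$
Gathering everything and taking the supremum over $t\in(0,T)$ yields an inequality in which the only remaining non-trivial term is the stochastic integral.

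The step where care is needed is the treatment of that stochastic integral after taking expectation. By the Burkholder-Davis-Gundy inequality together with $\sum_k\|\Phi(\bfv^N)\bfe_k\|_{L^2}^2\leq c(1+\|\bfv^N\|_{L^2}^2)$, one obtains
\begin{align*}
\E\sup_{t\in(0,T)}\Big|\int_0^t\bfC^N\cdot\bfSigma\,\dd\bfbeta^N_\sigma\Big|
&\leq c\,\E\bigg(\int_0^T\|\bfv^N\|_{L^2}^2\,(1+\|\bfv^N\|_{L^2}^2)\ds\bigg)^{1/2}\\
&\leq \tfrac12\E\sup_{(0,T)}\|\bfv^N\|_{L^2}^2 + c\,\E\int_0^T(1+\|\bfv^N\|_{L^2}^2)\ds,
\end{align*}
where in the last step Cauchy--Schwarz and Young's inequality are used in order to absorb the $\sup$-term on the left-hand side. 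After this absorption, Gronwall's lemma applied to $t\mapsto \E\sup_{s\leq t}\|\bfv^N(s)\|_{L^2}^2$ yields the claimed bound, with the right-hand side controlled in terms of $\E\|\bfv_0\|_{L^2}^2 = \int\|\bfu\|_{L^2}^2\,\dd\Lambda_0(\bfu)$ and $\E\|\bff\|_{L^2(\Q)}^2=\int\|\bfg\|_{L^2(\Q)}^2\,\dd\Lambda_\bff(\bfg)$ via hypothesis \eqref{initial'}. All constants are manifestly independent of $\alpha$ and $N$ because the $\alpha\|\bfv^N\|_{L^q}^q$ term enters with a favourable sign and is never used on the right-hand side.
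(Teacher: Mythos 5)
Your proposal is correct and follows essentially the same route as the paper: Itô's formula applied to $|\bfC^N|^2$, vanishing of the convective term, coercivity of $\bfS$ plus Korn, the linear growth assumption \eqref{eq:phi} for the Itô correction, and BDG with Young's inequality to absorb the stochastic integral before Gronwall. The only organizational difference is that the paper first establishes the weaker bound $\sup_t\E\big[\int_G|\bfv^N(t)|^2\dx\big]\leq c$ via a first-moment Gronwall (using $\E[J_2]=0$) and only then upgrades to $\E\big[\sup_t\|\bfv^N\|_{L^2}^2\big]$ by the BDG argument, whereas you fold both into a single Gronwall step on $t\mapsto\E\sup_{s\leq t}\|\bfv^N(s)\|_{L^2}^2$; this is fine (and slightly slicker), since the Galerkin solution furnished by the Prévôt--Röckner theorem lies in $L^2(\Omega;C([0,T];\R^N))$ a priori, so the absorption is legitimate.
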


\begin{proof}
We apply It\^{o}'s formula to the function $f(\bfC)=\tfrac{1}{2}|\bfC|^2$ which shows
\begin{align}
\frac{1}{2}\|\bfv^N(t)\|_{L^2(G)}^2
&=\frac{1}{2}\|\bfC^N(0)\|_{L^2(G)}^2+\sum_{k=1}^N\int_0^t  c_k^N\,\dd(c_k^N)_\sigma+\frac{1}{2}\sum_{k=1}^N\int_0^t \,\dd\langle c_k^N\rangle_\sigma\nonumber\\
&=\frac{1}{2}\|\mathcal P^N\bfv_0\|_{L^2(G)}^2-\int_0^t\int_{G}\bfS(\ep(\bfv^N)):\ep(\bfv^N)\dxs\nonumber\\
&-\alpha\int_0^t\int_{G}|\bfv^N|^q\dxs+\int_0^t\int_{G}\bff\cdot\bfv^N\dxs\label{eq:2.1neu}\\&+\int_G\int_0^t\bfv^N\cdot \Phi(\bfv^N)\,\dd\bfW^N_\sigma\dx
+\frac{1}{2}\int_G\int_0^t\,\dd\Big\langle\int_0^\cdot\Phi(\bfv^N)\,\dd\bfW^N\Big\rangle_\sigma\dx.\nonumber
\end{align}
Here we used $\dd\bfv^N=\sum_{k=1}^N \dd c_k^N\bfw_k$, $\int_G\bfv^N\otimes\bfv^N:\nabla\bfv^N\dx=0$, property (ii) of the base $(\bfw_k)$  and
\begin{align*}
\dd c_k^N&=-\int_{G}\bfS(\ep(\bfv^N)):\ep(\bfw_k)\dxt-\alpha\int_{G}|\bfv^N|^{q-2}\bfv^N\cdot\bfw_k\dxt\\&+\int_{G}\bfv^N\otimes\bfv^N:\nabla\bfw_k\dxt+\int_{G}\bff\cdot\bfw_k\dxt+\int_G \Phi(\bfv^N)\,\dd\bfW^N_t\cdot\bfw_k\dx.
\end{align*}
Now we can follow, building expectations and using (\ref{1.4'}) together with Korn's inequality, that
\begin{align*}
\E\bigg[\int_G &|\bfv^N(t)|^2\dx+\int_0^t\int_G |\nabla \bfv^N|^p\dxs+\alpha\int_0^t\int_G | \bfv^N|^q\dxs\bigg]\\&\leq c\Big(1+\E\big[\|\bfv_0\|^2_{L^2(G)}\big]+\E\big[J_1(t)\big]+\E\big[J_2(t)\big]+\E\big[J_3(t)\big]\Big).
\end{align*}
Here we abbreviated
\begin{align*}
J_1(t)&=\int_0^t\int_G\bff\cdot\bfv^N\dxs,\\
J_2(t)&=\int_G\int_0^t\bfv^N\cdot \Phi(\bfv^N)\,\dd\bfW^N_\sigma\dx,\\
J_3(t)&=\int_G\int_0^t\,\dd\Big\langle\int_0^\cdot\Phi(\bfv^N)\,\dd\bfW^N\Big\rangle_\sigma\dx.
\end{align*}
Straightforward calculations show on account of (\ref{eq:W}) and (\ref{eq:phi})
\begin{align*}
\E[J_3]&=\E\bigg[\sum_{i=1}^N\int_0^t\int_G  |\Phi(\bfv^N)\bfe_i|^2\dxs\bigg]\\
&\leq\E\bigg[\sum_{i=1}^\infty\int_0^t\int_G  |g_i(\bfv^N)|^2\dxs\bigg]\\
&\leq\E\bigg[1+\int_0^t\int_G |\bfv^N|^2\dxs\bigg].
\end{align*}
Using Young's inequality we gain for arbitrary $\delta>0$
\begin{align*}
\E[J_1]\leq\delta\E\bigg[\int_0^t\int_G |\bfv^N|^2\dxs\bigg] +c(\delta)\E\bigg[\int_0^t\int_G |\bff|^{2}\dxs\bigg].
\end{align*}
Clearly, we have $\E[J_2]=0$. So
interchanging the time-integral and the expectation value and applying Gronwall's Lemma leads to
\begin{align}\label{eq:4.4}
\begin{aligned}
\sup_{t\in(0,T)}\E&\bigg[\int_G |\bfv^N(t)|^2\dx\bigg]+\E\bigg[\int_\Q |\nabla \bfv^N|^p\dxt\bigg]\\&\leq c\,\E\bigg[1+\int_G |\bfv_0|^2\dx+\int_\Q |\bff|^{2}\dx\bigg].
\end{aligned}
\end{align}
We want to interchange supremum and expectation value. Similar arguments as before show by (\ref{eq:4.4})
\begin{align}\label{eq:4.5}
&\E\bigg[\sup_{t\in(0,T)}\int_G |\bfv^N(t)|^2\dx\bigg]\\&\leq c\,\E\bigg[1+\int_G |\bfv_0|^2\dx+\int_\Q |\bff|^2\dx+\int_0^T\int_G|\bfv^N|^2\dxt\bigg]+\E\bigg[\sup_{(0,T)}|J_2(t)|\bigg].\nonumber
\end{align}
On account of Burgholder-Davis-Gundi inequality, Young's inequality and (\ref{eq:phi}) we gain\footnote{Note that the paths of $\bfv^N$ in $L^2(G)$ are continuous in time $\p$-a.s.}
\begin{align*}
\E\bigg[\sup_{t\in(0,T)}|J_2(t)|\bigg]&=\E\bigg[\sup_{t\in(0,T)}\bigg|\int_0^t\int_G\bfv^N\cdot\Phi(\bfv^N)\dx\,\dd\bfW^N_\sigma\bigg|\bigg]\\
&=\E\bigg[\sup_{t\in(0,T)}\bigg|\int_0^t\sum_i\int_G\bfv^N\cdot\Phi(\bfv^N)\bfe_i\dx\,\dd\beta_i(\sigma)\bigg|\bigg]\\
&=\E\bigg[\sup_{t\in(0,T)}\bigg|\int_0^t\sum_i\int_G\bfv^N\cdot g_i(\bfv^N)\dx\,\dd\beta_i(\sigma)\bigg|\bigg]\\
&\leq c\,\E\bigg[\int_0^T\sum_i\bigg(\int_G\bfv^N\cdot g_i(\bfv^N)\dx\bigg)^2\dt\bigg]^{\frac{1}{2}}\\
&\leq c\,\E\bigg[\bigg(\int_0^T\bigg(\sum_{i=1}^N \int_G|\bfv^N|^2\dx\int_G |g_i(\bfv^N)|^2\dx\bigg)\dt\bigg]^{\frac{1}{2}}\\
&\leq c\,\E\bigg[1+\int_0^T\bigg( \int_G|\bfv^N|^2\dx\bigg) ^2\dt\bigg]^{\frac{1}{2}}\\
&\leq \delta\,\E\bigg[\sup_{t\in(0,T)}\int_G|\bfv^N|^2\dx\bigg]+c(\delta)\,\E\bigg[1+\int_0^T\int_G|\bfv^N|^2\dxt\bigg].
\end{align*}
This finally proves the claim for $\delta$ sufficiently small using (\ref{eq:4.4}) as well as $\Lambda_0=\p\circ\bfv_0^{-1}$ and $\Lambda_\bff=\p\circ\bff^{-1}$.
\end{proof}

\begin{theorem}\label{thm:2.2}
Assume (\ref{1.4'}) with $p\in(1,\infty)$, (\ref{eq:phi}), $q\geq\{2p',3\}$ and
\eqref{initial'}.
\begin{itemize}
\item[a)] There is a martingale weak solution $$\big((\underline{\Omega},\underline{\mathcal F},(\underline{\mathcal F}_t),\underline{\p}),\underline{\bfv},\underline{\bfv}_0,\underline{\bff},\underline{\bfW})$$
 to (\ref{eq:stokes}) in the sense that
\begin{itemize}
\item[i)] $(\underline{\Omega},\underline{\mathcal F},(\underline{\mathcal F}_t),\underline{\p})$ is a stochastic basis with a complete right-continuous filtration,
\item[ii)] $\underline{\bfW}$ is an $(\underline{\mathcal F}_t)$-cylindrical Wiener process,
\item[iii)] $\underline{\bfv}\in\underline{\mathcal V}_{p,q}$ is progressively measurable, where
\begin{align*}
\underline{\mathcal V}_{p,q}:=&L^2(\underline{\Omega},\underline{\F},\underline{\p};L^\infty(0,T;L^2(G)))\cap L^q(\underline{\Omega}\times \Q;\underline{\p}\otimes\mathcal L^{d+1})\\&\cap L^p(\underline{\Omega},\underline{\F},\underline{\p};L^p(0,T;W^{1,p}_{0,\Div}(G))).
\end{align*}
\item[iv)] $\underline{\bfv}_0\in L^2(\Omega,\F_0,\p;L^2(G))$ with $\Lambda_0=\underline\p\circ \underline\bfv_0^{-1}$,
\item[v)] $\underline{\bff}\in L^2(\underline{\Omega},\underline{\F},\underline{\p};L^2(\Q))$ is adapted to $(\underline{\F}_t)$ with $\Lambda_\bff=\underline{\p}\circ \underline{\bff}^{-1}$,
\item[vi)] for all
 $\bfvarphi\in C^\infty_{0,\Div}(G)$ and all $t\in[0,T]$ there holds $\underline{\p}$-a.s.
\begin{align*}
\int_G&\underline{\bfv}(t)\cdot\bfvarphi\dx +\int_0^t\int_G\bfS(\ep(\underline{\bfv})):\ep(\bfphi)\dxs+\alpha\int_0^t\int_G|\underline{\bfv}|^{q-2}\underline{\bfv}\cdot\bfphi\dxs\\&=\int_0^t\int_G\underline{\bfv}\otimes\underline{\bfv}:\ep(\bfphi)\dxs+\int_G\underline{\bfv}_0\cdot\bfvarphi\dx
+\int_G\int_0^t\underline{\bff}\cdot\bfphi\dxs+\int_G\int_0^t\Phi(\underline{\bfv})\,\dd\underline{\bfW}_\sigma\cdot \bfvarphi\dx.
\end{align*}
\end{itemize}
\item[b)] There holds
\begin{align*}
\underline{\E}&\bigg[\sup_{t\in(0,T)}\int_G |\underline{\bfv}(t)|^2\dx+\int_\Q |\nabla \underline{\bfv}|^p\dxt+\alpha\int_\Q |\underline{\bfv}^N|^q\dxt\bigg]\\&\leq c\,\bigg(1+\int_{L^2_{\Div}(G)}\big\|\bfu\big\|_{L^2(G)}^2\,\dd\Lambda_0(\bfu)+
\int_{L^2(\Q)}\big\|\bfg\big\|_{L^2(\Q)}^2\,\dd\Lambda_\bff(\bfg)\bigg).
\end{align*}
where $c$ is independent of $\alpha$.
\end{itemize}
\end{theorem}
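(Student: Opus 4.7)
The plan is a stochastic compactness argument based on the Galerkin solutions $\bfv^N$ constructed just above. Theorem \ref{thm:2.1} provides uniform bounds for $(\bfv^N)$ in every component of $\mathcal V_{p,q}$, and via \eqref{1.4'} and \eqref{eq:phi} uniform bounds for $\bfS(\ep(\bfv^N))$ in $L^{p'}(\Omega\times\Q)$ and for $|\bfv^N|^{q-2}\bfv^N$ in $L^{q'}(\Omega\times\Q)$. Combining the equation with the Burkholder-Davis-Gundy inequality produces $\p$-a.s.\ H\"older regularity of $\bfv^N$ with values in a negative-order Sobolev space, so Aubin-Lions-Simon gives tightness of the joint laws of $(\bfv^N,\bfv_0,\bff,\bfW^N)$ on a path space
\begin{align*}
\mathcal X:=\bigl(L^q(\Q),w\bigr)\cap L^p(0,T;L^p(G))\,\times\, L^2_{\Div}(G)\,\times\, L^2(\Q)\,\times\, C([0,T];U_0),
\end{align*}
where the velocity factor already incorporates strong $L^\sigma(\Q)$ convergence for every $\sigma<q$.

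Jakubowski's version of the Skorokhod representation theorem yields a new stochastic basis $(\underline\Omega,\underline\F,\underline\p)$ carrying random variables $(\underline\bfv^N,\underline\bfv_0^N,\underline\bff^N,\underline\bfW^N)$ with the original joint laws and $\underline\p$-a.s.\ convergence to a limit $(\underline\bfv,\underline\bfv_0,\underline\bff,\underline\bfW)$. Defining $(\underline\F_t)$ as the usual augmentation of the natural filtration of these processes, the L\'evy characterization together with the standard martingale identification argument (Debussche-Glatt-Holtz-Temam) shows that $\underline\bfW$ is an $(\underline\F_t)$-cylindrical Wiener process and that the Galerkin equation holds on the new basis. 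Uniform bounds deliver weak subsequential limits $\overline\bfS$ of $\bfS(\ep(\underline\bfv^N))$ in $L^{p'}$ and $\overline\eta$ of $|\underline\bfv^N|^{q-2}\underline\bfv^N$ in $L^{q'}$. Passing to the limit: the convective and external-force terms converge by strong $L^2(\Q)$ convergence of $\underline\bfv^N$ and weak convergence of $\underline\bff^N$; the stress and stabilizer terms converge to integrals against $\overline\bfS$ and $\overline\eta$; the stochastic integral $\int_0^\cdot\Phi(\underline\bfv^N)\dd\underline\bfW^N$ converges to $\int_0^\cdot\Phi(\underline\bfv)\dd\underline\bfW$ by the linear growth and Lipschitz continuity built into \eqref{eq:phi} together with convergence of the quadratic variation.

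The main obstacle is identifying $\overline\bfS=\bfS(\ep(\underline\bfv))$ and $\overline\eta=|\underline\bfv|^{q-2}\underline\bfv$. Since $q\geq 2p'$ the limit $\underline\bfv$ is itself an admissible test function (in particular $\underline\bfv\otimes\underline\bfv\in L^{q/2}$ with $q/2\geq p'$, so the convective term tests correctly and vanishes), and It\^o's formula applied to $\tfrac12\|\underline\bfv(t)\|_{L^2}^2$ in the limit equation yields
\begin{align*}
\tfrac12\underline\E\|\underline\bfv(T)\|_{L^2}^2&+\underline\E\int_\Q\overline\bfS:\ep(\underline\bfv)\dxt+\alpha\,\underline\E\int_\Q\overline\eta\cdot\underline\bfv\dxt\\
&=\tfrac12\underline\E\|\underline\bfv_0\|_{L^2}^2+\underline\E\int_\Q\underline\bff\cdot\underline\bfv\dxt+\tfrac12\,\underline\E\int_0^T\|\Phi(\underline\bfv)\|_{L_2(U,L^2(G))}^2\dt.
\end{align*}
Taking $\limsup_N$ in the corresponding Galerkin identity (the analogue of \eqref{eq:2.1neu}), using weak lower semicontinuity of $\underline\bfv^N(T)\mapsto\|\underline\bfv^N(T)\|_{L^2}^2$, strong convergence of $\underline\bfv^N$ in $L^2(\Q)$, and Fatou for the quadratic-variation term, I obtain the complementary inequality $\limsup_N\underline\E\int_\Q\bigl(\bfS(\ep(\underline\bfv^N)){:}\ep(\underline\bfv^N)+\alpha|\underline\bfv^N|^q\bigr)\dxt\leq\underline\E\int_\Q\overline\bfS{:}\ep(\underline\bfv)\dxt+\alpha\,\underline\E\int_\Q\overline\eta\cdot\underline\bfv\dxt$. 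Monotonicity of $\bfS$ and of $\bfz\mapsto|\bfz|^{q-2}\bfz$ together with Minty's trick then force the two desired identifications and complete part (a). Part (b) follows from the uniform estimate of Theorem \ref{thm:2.1} by equality of laws on $(\underline\Omega,\underline\p)$ combined with Fatou's lemma and weak lower semicontinuity of the norms involved.
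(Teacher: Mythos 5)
Your proposal follows essentially the same route as the paper: Galerkin solutions, uniform bounds from Theorem \ref{thm:2.1}, fractional time-regularity via Burkholder--Davis--Gundy, Aubin--Lions--Simon compactness, tightness of joint laws, Skorokhod representation to pass to a new basis, martingale identification \`a la Ondrej\'at/Hofmanov\'a, and finally an energy comparison plus monotonicity (Minty) to identify the weak limit of $\bfS(\ep(\bfv^N))$.

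One small but worth-noting deviation: you build the weak topology $(L^q(\Q),w)$ into the path space, which forces the use of Jakubowski's quasi-Polish Skorokhod theorem here. The paper instead takes the path space $L^r(0,T;L^r(G))\otimes C([0,T],U_0)\otimes L^2_{\Div}(G)\otimes L^2(\Q)$ with $r<q$, which is Polish, and then recovers the weak limits of $\bfv^N\otimes\bfv^N$, $\bfS(\ep(\bfv^N))$, etc.\ directly from the uniform bounds and equality of laws; the classical Skorokhod theorem (Ikeda--Watanabe) suffices and additionally lets one take $(\underline\Omega,\underline\F,\underline\p)=([0,1),\mathcal B[0,1),\mathcal L^1)$ independently of $\alpha$ (Remark \ref{rem:ps}), a fact used later. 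Jakubowski is only needed in the paper in Section 5, where the pressure components genuinely live in weakly-topologized spaces. Also, you invoke Minty for the stabilizer limit $\overline\eta$; in the paper the strong convergence $\underline\bfv^N\to\underline\bfv$ in $L^r(\underline\Omega\times\Q)$ for all $r<q$ together with the uniform $L^{q'}$ bound already identifies $\bfs(\underline\bfv^N)\rightharpoondown\bfs(\underline\bfv)$, so only $\tilde\bfS$ requires the monotonicity argument. These are tactical rather than structural differences; the proof is sound.
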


\begin{proof}
From the a priori estimate in Theorem \ref{thm:2.1} we can follow the existence of functions $\bfv\in \mathcal V_{p,q}$ and functions $\tilde{\bfs}$ and $\tilde{\bfS}$ such that (after passing to a not relabeled subsequence)
\begin{align}\label{eq:conv}
\begin{aligned}
\bfv^N&\rightharpoondown \bfv \quad\text{in}\quad  L^p(\Omega,\F,\p;L^p(0,T;W_0^{1,p}(G))),\\
%\bfv^N&\rightharpoondown^{*} \bfv \quad\text{in } L^2(\Omega,L^\infty(0,T;L^2(G))),\\
\bfv^N&\rightharpoondown \bfv \quad\text{in}\quad L^q(\Omega,\F,\p;L^q(\Q)),\\
\bfs(\bfv^N)&\rightharpoondown \tilde{\bfs} \quad\text{in}\quad L^{q'}(\Omega,\F,\p;L^{q'}(\Q)),\\
\bfS(\ep(\bfv^N))&\rightharpoondown \tilde{\bfS} \quad\text{in}\quad L^{p'}(\Omega,\F,\p;L^{p'}(\Q)),\\
\bfS(\ep(\bfv^N))&\rightharpoondown \tilde{\bfS} \quad\text{in}\quad L^{p'}(\Omega,\F,\p;L^{p'}(0,T;W_0^{-1,p'}(G))).
\end{aligned}
\end{align}
Moreover, there are $\tilde{\bfV}$ and $\tilde{\Phi}$ (recall (\ref{eq:phi}) and Theorem \ref{thm:2.1}) such that
\begin{align}
\label{eq:conv2'}
\begin{aligned}
\bfv^N\otimes\bfv^N&\rightharpoondown \tilde{\bfV}\quad\text{in}\quad L^{\frac{q}{2}}(\Omega,\F,\p;L^{\frac{q}{2}}(\Q)),\\
\Phi(\bfv^N)&\rightharpoondown \tilde{\Phi}\quad\text{in}\quad L^2(\Omega,\F,\p; L^2(0,T;L_2(U,L^2(G)))).
\end{aligned}
\end{align}
We want to establish
\begin{align}\label{eq:conv3}
\tilde{\bfV}=\bfv\otimes\bfv,\quad\tilde{\Phi}=\Phi(\bfv).
\end{align}
This will be a consequence of some compactness arguments. \\
We will follow ideas from \cite{Ho1}, section 4. We consider $\bfphi\in C^\infty_{0,\Div}(G)$ and gain by (\ref{eq:gal})
\begin{align*}
\int_G\bfv^N&(t)\cdot\bfphi\dx=\int_G\bfv^N(t)\cdot\mathcal P^N_l\bfphi\dx\\
&=\int_G\bfv_0\cdot\mathcal P^N_l\bfphi\dx+\int_0^t\int_G \bfH^N:\nabla\mathcal P^N_l\bfphi\dxs+\int_G\int_0^t\Phi(\bfv^N)\dd\bfW^N_\sigma\cdot\mathcal P^N_l\bfphi\dx,\\
\bfH^N&:=-\bfS(\ep(\bfv^N))+\nabla\Delta^{-1}\bfs(\bfv^N)+\bfv^N\otimes\bfv^N-\nabla\Delta^{-2}\bff.
\end{align*}
Here $\mathcal P^N_l$ denotes the projection into $\mathcal X_N$ with respect to the $W^{l,2}_{0,\Div}(G)$ inner product. From the a priori estimates in Theorem \ref{thm:2.1} and the growth conditions for $\bfS$ following from (\ref{1.4'}) and $\bfs$ we gain
\begin{align}
\label{eq:p0}\bfH^N\in L^{p_0}(\Omega\times\Q;\p\otimes\mathcal L^{d+1}),\quad p_0:=\min\left\{p',q',\frac{q}{2}\right\}>1,
\end{align}
uniformly in $N$.
Let us consider the functional
$$\mathscr H(t,\bfphi):=\int_0^t\int_G \bfH^N:\nabla\mathcal P^N_l\bfphi\dxs,\quad \bfphi\in C^\infty_{0,\Div}(G).$$
Then we deduce from (\ref{eq:p0}) and the embedding $W^{\tilde{l},p_0}(G)\hookrightarrow W_0^{l,2}(G)$ for $\tilde{l}\geq l+d\big(1+\frac{2}{p_0}\big)$ the estimate
\begin{align*}
\E\bigg[\big\|\mathscr H\big\|_{W^{1,p_0}([0,T];W_{\Div}^{-\tilde{l},p_0}(G))}\bigg]\leq c.
\end{align*}
For the stochastic term we quote from \cite{Ho1}, proof of Lemma 4.6, for some $\mu=\mu(d,p)>0$
\begin{align*}
\E\bigg[\Big\|\int_0^t\Phi(\bfv^N)\,\dd\bfW_\sigma^N\Big\|_{C^{\mu}([0,T]; L^2(G))}\bigg]\leq c\bigg(1+\int_{\Omega\times \Q}|\bfv^N|^{q}\dxt\,\dd\p\bigg)\leq c.
\end{align*}
This is a consequence of Theorem \ref{thm:2.1}, $q>2$ and assumption (\ref{eq:phi}). Combining the both informations above shows
\begin{align}\label{eq:vt1}
\E\Big[\|\bfv^N\|_{C^{\mu}([0,T]; W_{\Div}^{-\tilde{l},p_0}(G))}\Big]\leq c.
\end{align}
and also for some $\lambda>0$
\begin{align}\label{eq:vt2}
\E\Big[\|\bfv^N\|_{W^{\lambda,p_0}(0,T; W_{0,\Div}^{-\tilde{l},p_0}(G))}\Big]\leq c.
\end{align}
An interpolation with $L^{p_0}(0,T; W_{0,\Div}^{1,p_0}(G))$ yields on account of (\ref{eq:aprimpro}) for some $\kappa>0$ (see \cite{Am}, Thm. 5.2)
\begin{align}\label{eq:vt3}
\E\Big[\|\bfv^N\|_{W^{\kappa,p_0}(0,T; L^{p_0}_{\Div}(G))}\Big]\leq c.
\end{align}
So, we have
\begin{align*}
W^{\kappa,p_0}(0,T;L^{p_0}_{\Div}(G))\cap \mathcal V_{p,q}\hookrightarrow \hookrightarrow L^r(0,T;L^r_{\Div}(G))
\end{align*}
compactly for all $r<q$. We will use this embedding in order to show compactness of $\bfv^N$.
We consider the path space
$$ \mathscr V:= L^r(0,T;L^r(G))\otimes C([0,T],U_0)\otimes L^2_{\Div}(G)\otimes L^2(\Q)$$ 
%So we need to show tightness of the measure
%\begin{align*}
%\bfnu^N:=\nu_{\bfv^N}\otimes \nu_{\bfW}\otimes\Lambda_{0}\otimes\Lambda_{\bff}.
%\end{align*}
Here we will use the following notations:
\begin{itemize}
\item $\nu_{\bfv^N}$ is the law of $\bfv^N$ on $L^r(0,T;L^r(G))$;
\item $\nu_{\bfW}$ is the law of $\bfW$ on $C([0,T],U_0)$, where $U_0$ is defined in (\ref{eq:U0});
\item $\bfnu^m$ is the joint law of $\bfv^N$, $\bfW$, $\bfv_0$ and $\bff$ on $ \mathscr V$.
%\item $\nu_{0}$ is the law of $\bfv_0$ on $L^2_{\Div}(G)$;
%\item $\nu_\bff$ is the law of $\bff$ on $L^2(\Q)$.
\end{itemize}
We consider the ball $\mathcal B_R$ in the space $W^{\kappa,p_0}(0,T;L^{p_0}_{\Div}(G))\cap \mathcal V_{p,q}$
and gain for its complement $\mathcal B_R^C$ by Theorem \ref{thm:2.1} and
\eqref{eq:vt3}
\begin{align*}
\mu_{\bfv^N}&(\mathcal B_R^C)=\p\big(\|\bfv^N\|_{W^{\kappa,p_0}(0,T;L^{p_0}_{\Div}(G))}+\|\bfv^N\|_{ \mathcal V_{p,q}}\geq R\big)\\&
\leq \frac{1}{R}\,\E\Big[\|\bfv^N\|_{W^{\kappa,p_0}(0,T;L^{p_0}_{\Div}(G))}+\|\bfv^N\|_{ \mathcal V_{p,q}}\Big]
\leq \frac{c}{R}.
\end{align*}
So for a fixed $\eta>0$ we find $R(\eta)$ with
\begin{align*}
\mu_{\bfu^m}(\mathcal B_{R(\eta)})&\geq 1-\frac{\eta}{4}.
\end{align*}
Since also the law $\mu_{\bfW}$ is tight as
being a Radon measure on the Polish space $C([0,T],U_0)$, there exists a compact set
$C_\eta\subset C([0,T],U_0)$ such that $\mu_{\bfW}(C_\eta)\geq1-\tfrac{\eta}{4}$. For the same reason we find compact subsets of $L^2_{\Div}(G)$ and $L^2(\Q)$ with such that their measures ($\Lambda_0$ and $\Lambda_\bff$) are smaller than $1-\frac{\eta}{4}$.
Hence, we can find a compact subset %(recall assumption (\ref{eq:fv0compact}))
$\mathscr V_\eta\subset \mathscr V$
such that $\bfnu^m(\mathscr V_\eta)\geq1-\eta$. Thus, $\left\{\bfnu^N,\,\,N\in\N\right\}$ is tight in the same space.
Prokhorov's
Theorem (see \cite{IkWa}, Thm. 2.6, p. 7) therefore implies that $\bfnu^N$ is also relatively weakly compact. This means we have a weakly convergent subsequence with limit $\bfnu$. Now we use Skorohod's
representation theorem (see \cite{IkWa}, Thm. 2.7, p. 9) to infer the existence of a probability space $(\underline{\Omega},\underline{\F},\underline{\p})$, a sequence $(\underline{\bfv}^N,\underline{\bfW}^N,\underline{\bfv}_0^N,\underline{\bff}^N)$ and $(\underline{\bfv},\underline{\bfW},\underline{\bfv}_0,\underline{\bff})$ on $(\underline{\Omega},\underline{\F},\underline{\p})$ both with values in $\mathscr V$ such that the following holds
\begin{itemize}
\item The laws of $(\underline{\bfv}^N,\underline{\bfW}^N,\underline{\bfv}_0^N,\underline{\bff}^N)$ and $(\underline{\bfv},\underline{\bfW},\underline{\bfv}_0,\underline{\bff})$ under $\underline{\p}$ coincide with $\bfnu^N$ and $\bfnu$.
\item We have the convergences
\begin{align*}
\underline{\bfv}^N&\longrightarrow \underline{\bfv}\quad\text{in}\quad L^r(0,T;L^r(G)),\\
\underline{\bfW}^N&\longrightarrow \underline{\bfW}\quad\text{in}\quad C([0,T],U_0),\\
\underline{\bfv}_0^N&\longrightarrow \underline{\bfv}_0\quad\text{in}\quad L^2(G),\\
\underline{\bff}^N&\longrightarrow \underline{\bff}\quad\text{in}\quad L^2(0,T;L^2(G)),
\end{align*}
$\underline{\p}$-a.s.
\item The convergences in (\ref{eq:conv}) and (\ref{eq:conv2'}) remain valid for the corresponding functions defined on $(\underline{\Omega},\underline{\F},\underline{\p})$. Moreover, we have for all $\alpha<\infty$
\begin{align*}
\int_{\underline{\Omega}}\bigg(\sup_{[0,T]}\|\underline{\bfW}^N(t)\|_{U_0}^\alpha\bigg)\,\dd\underline{\p}=\int_{\Omega}\bigg(\sup_{[0,T]}\|\bfW(t)\|_{U_0}^\alpha\bigg)\,\dd\p.
\end{align*}
\end{itemize}
After choosing a subsequence we gain by Vitali's convergence Theorem
\begin{align}\label{eq:convWm'}
\underline{\bfW}^N&\longrightarrow\underline{\bfW}\quad\text{in}\quad L^2(\underline{\Omega},\underline{\F},\underline{\p};C([0,T],U_0)),\\\label{eq:compactdelta'}
\underline{\bfv}^N&\longrightarrow \underline{\bfv}\quad\text{in}\quad L^r(\underline{\Omega}\times \Q;\underline{\p}\otimes\mathcal L^{d+1}),\\
\label{eq:compactv0'}
\underline{\bfv}^N_0&\longrightarrow \underline{\bfv}_0\quad\text{in}\quad L^2(\underline{\Omega}\times G,\underline{\p}\otimes\mathcal L^{d+1}),\\
\label{eq:compactf'}
\underline{\bff}^N&\longrightarrow \underline{\bff}\quad\text{in}\quad L^2(\underline{\Omega}\times \Q,\underline{\p}\otimes\mathcal L^{d+1}),
\end{align}
for all $r<q$. Now we introduce the filtration on the new probability space.
We denote by $\bfr_t$ the operator of restriction to the interval $[0,t]$ acting on various path spaces. In particular, if $X$ stands for one of the path spaces $L^r(0,T;L^r(G)),\,L^2(Q)$ or $C([0,T],U_0)$ and $t\in[0,T]$, we define
\begin{align}\label{restr}
\bfr_t:X\rightarrow X|_{[0,t]},\quad f\mapsto f|_{[0,t]}.
\end{align}
Clearly, $ \bfr_t$ is a continuous mapping.
Let $(\underline{\mathcal F}_t)$ be the $\underline{\p}$-augmented canonical filtration of the process $\big(\underline{\bfv},\underline{\bff},\underline{\bfW}\big)$, respectively, that is
\begin{equation*}
\begin{split}
\underline{\mathcal F}_t&=\sigma\Big(\sigma\big(\bfr_t\underline{\bfv},\bfr_t\underline{\bff},\bfr_t\underline{\bfW}\big)\cup\big\{N\in\underline{\mathcal F};\;\underline{\p}(N)=0\big\}\Big),\quad t\in[0,T].
\end{split}
\end{equation*}
Now are going to show that the approximated equations also hold on the new probability
space. We use a general and elementary method that was recently introduced in \cite{on2} and already generalized to different settings (see for instance \cite{Ho1}). The keystone is to identify not only the quadratic variation of the corresponding martingale but also its cross variation with the limit Wiener process obtained through compactness. First we notice that $\underline \bfW^N$ has the same law as $\bfW$. As a consequence, there exists a collection of mutually independent real-valued $(\underline{\F}_t)$-Wiener processes $(\underline{\beta}^{N}_k)_{k}$ such that $\underline{\bfW}^N=\sum_{k}\underline{\beta}^{N}_k e_k$, i.e. there exists a collection of mutually independent real-valued $(\underline{\F}_t)$-Wiener processes $(\underline{\beta}_k)_{k\geq1}$ such that $\underline{\bfW}=\sum_{k}\underline{\beta}_k e_k$. We abbreviate $\underline{\bfW}^{N,N}:=\sum_{k=1}^N\bfe_k\underline{\beta}^N_k$.
Let us now define for all $t\in[0,T]$ and $\bfvarphi\in C^\infty_{0,\Div}(G)$ the functionals
\begin{equation*}
\begin{split}
M(\bfv^N,\bfv_0,\bff)_t&=\int_G\bfv^N(t)\cdot\bfvarphi\dx-\int_G \bfv_0\cdot\bfvarphi\dx+\int_0^t\int_G\bfv^N\otimes\bfv^N:\nabla\mathcal P_N\bfvarphi\dxs\\&+\int_0^t\int_G\bfS(\ep(\bfv^N)):\ep(\mathcal P_N\bfvarphi)\dxs
+\int_0^t\int_G\bff\cdot\mathcal P_N\bfvarphi\dxs,\\
N(\bfv^N)_t&=\sum_{k=1}^N\int_0^t\bigg(\int_G g_k(\bfv)\cdot\mathcal P_N\bfvarphi\dx\bigg)^2\ds,\\
N_k(\bfv^N)_t&=\int_0^t\int_G g_k(\bfv)\cdot\mathcal P_N\bfvarphi\dxs,
\end{split}
\end{equation*}
let $M(\bfv^N,\bfv_0,\bff)_{s,t}$ denote the increment $M(\bfv^N,\bfv_0,\bff)_{t}-M(\bfv^N,\bfv_0,\bff)_{s}$ and similarly for $N(\bfv^N)_{s,t}$ and $N_k(\bfv^N)_{s,t}$. 
Note that the proof will be complete once we show that the process $M(\underline\bfv^N)$ is an $(\underline{\F}_t)$-martingale and its quadratic and cross variations satisfy, respectively,
\begin{equation}\label{mart}
\begin{split}
\langle M(\underline\bfv^N,\underline{\bfv}_0,\underline\bff)\rangle&=N(\underline\bfv^N),\quad\qquad\langle M(\underline\bfv^N,\underline{\bfv}_0,\underline\bff),\underline \beta_k\rangle=N_k(\underline\bfv^N).
\end{split}
\end{equation}
Indeed, in that case we have
\begin{align}\label{neu3108}
\bigg\langle M(\underline\bfv^N,\underline{\bfv}_0,\underline\bff)-\int_0^{\cdot} \int_G\Phi(\underline\bfv^N)\,\dd\underline \bfW^{N,N}\cdot\mathcal P_N\bfvarphi\dx\bigg\rangle=0
\end{align}
which yields the desired equation on the new probability space.
Let us verify \eqref{mart}. To this end, we claim that with the above uniform estimates in hand, the mappings
$$(\bfv^N,\bfv_0,\bff)\mapsto M(\bfv^N,\bfv_0,\bff)_t,\quad\,\bfv^N\mapsto N(\bfv^N)_t,\,\quad \bfv^N\mapsto N_k(\bfv^N)_t$$
are well-defined and measurable on a subspace of the path space where the joint law of $(\underline\bfv^N,\underline{\bfv}_0,\underline\bff)$ is supported, i.e. the uniform estimates
form Theorem \ref{thm:2.1} hold true.
Indeed, in the case of $N(\rho,\bfq)_t$ we have by \eqref{eq:phi} and the continuity of $\mathcal P^N$ in $L^2(G)$
\begin{align*}
\sum_{k= 1}^N\int_0^t\bigg(\int_G g_k(\bfv^N)\cdot\mathcal P_N\varphi\dx\bigg)^2\ds&\leq c(\bfphi)\sum_{k=1}^\infty\int_0^t\int_G|g_k(\bfv^N)|^2\dxs\\
&\leq c(\bfphi)\bigg(1+\int_\Q|\bfv^N|^2\dxt\bigg)
\end{align*}
which is finite.
$M(\rho,\bfv,\bfq)$ and $N_k(\rho,\bfv)_t\,$ can be handled similarly and therefore, the following random variables have the same laws
\begin{align*}
M(\bfv^N,\bfv_0,\bff)&\sim M(\underline\bfv^N,\underline{\bfv}_0,\underline\bff),\\
N(\bfv^N)& \sim N(\underline\bfv^N),\\
N_k(\bfv^N)&\sim N_k(\underline\bfv^N).
\end{align*}
Let us Now fix times $s,t\in[0,T]$ such that $s<t$ and let
$$h:\mathscr V\big|_{[0,s]}\rightarrow [0,1]$$
be a continuous function.
Since
$$M(\bfv^N,\bfv_0,\bff)_t=\int_0^t\int_G\Phi(\bfv^N)\,\dd\bfW^N_\sigma\cdot\mathcal P_N\bfvarphi\dx=\sum_{k=1}^N\int_0^t\int_G g_k(\bfv^N)\cdot\mathcal P_N\bfvarphi\dx\,\dd\beta_k$$
is a square integrable $(\F_t)$-martingale, we infer that
$$\big[M(\bfv^N,\bfv_0,\bff)\big]^2-N(\bfv^N),\quad M(\bfv^N)\beta_k-N_k(\bfv^N),$$
are $(\F_t)$-martingales.
Let $\bfr_s$ be the restriction of a function to the interval $[0,s]$. Then it follows from the equality of laws that
\begin{equation*}%\label{exp11}
\begin{split}
&\underline{\E}\big[\,h\big(\bfr_s\underline\bfv^N,\bfr_s\underline{\bfW}^N,\bfr_s\underline{\bff},\underline\bfv_0\big)M(\underline\bfv^N,\underline{\bfv}_0,\underline\bff)_{s,t}\big]\\
&=\E \big[\,h\big(\bfr_s\bfv^N,\bfr_s\bfW^N,\bfr_s\bff,\bfv_0\big)M(\bfv^N,\bfv_0,\bff)_{s,t}\big]=0,
\end{split}
\end{equation*}
\begin{equation*}%\label{exp21}
\begin{split}
&\underline{\E}\bigg[\,h\big(\bfr_s\underline\bfv^N,\bfr_s\underline{\bfW}^N,\bfr_s\underline{\bff},\underline\bfv_0\big)\Big([M(\underline\bfv^N,\underline{\bfv}_0,\underline\bff)^2]_{s,t}-N(\underline\bfv^N)_{s,t}\Big)\bigg]\\
&=\E\bigg[\,h\big(\bfr_s\bfv^N,\bfr_s\bfW^N,\bfr_s\bff,\bfv_0\big)\Big([M(\bfv^N,\bfv_0,\bff)^2]_{s,t}-N(\bfv^N)_{s,t}\Big)\bigg]=0,
\end{split}
\end{equation*}
\begin{equation*}%\label{exp31}
\begin{split}
&\underline{\E}\bigg[\,h\big(\bfr_s\underline\bfv^N,\bfr_s\underline{\bfW}^N,\bfr_s\underline{\bff},\underline\bfv_0\big)\Big([M(\underline\bfv^N,\underline{\bfv}_0,\underline\bff)\underline{\beta}_k^N]_{s,t}-N_k(\underline\bfv^N)_{s,t}\Big)\bigg]\\
&=\E\bigg[\,h\big(\bfr_s\bfv^N,\bfr_s\bfW^N,\bfr_s\bff,\bfv_0\big)\Big([M(\bfv^N,\bfv_0,\bff)\beta_k]_{s,t}-N_k(\bfv^N)_{s,t}\Big)\bigg]=0.
\end{split}
\end{equation*}
So we have shown \eqref{mart} and hence (\ref{neu3108}). This means
on the new probability space $(\underline{\Omega},\underline{\F},\underline{\p})$ we have the equations (k=1,...,N)
\begin{align}
\int_G &\dd\underline{\bfv}^N\cdot\bfw_k\dx+\int_{G}\bfS(\ep(\underline{\bfv}^N)):\ep(\bfw_k)\dx\dt+\alpha\int_G|\underline{\bfv}^N|^{q-2}\underline{\bfv}^N\cdot\bfw_k\dxt\nonumber
\\&=\int_G\underline{\bfv}^N\otimes\underline{\bfv}^N:\nabla\bfw_k\dx\dt+\int_G\underline{\bff}\cdot\bfw_k\dx\dt+\int_G \Phi(\underline{\bfv}^N)\,\dd\underline{\bfW}^{N,N}_\sigma\cdot\bfw_k\dx,\nonumber
\\
&\qquad\qquad\underline{\bfv}^N(0)=\mathcal P^N\underline{\bfv}_0.\label{eq:gal'}
\end{align}
and the convergences
\begin{align}\label{eq:conv'}
\begin{aligned}
\underline{\bfv}^N&\rightharpoondown \underline{\bfv} \quad\text{in}\quad  L^p(\underline{\Omega},\underline{\F},\underline{\p};L^p(0,T;W_0^{1,p}(G))),\\
%\bfv^N&\rightharpoondown^{*} \bfv \quad\text{in } L^2(\Omega,L^\infty(0,T;L^2(G))),\\
\underline{\bfv}^N&\rightharpoondown \underline{\bfv} \quad\text{in}\quad L^q(\underline{\Omega},\underline{\F},\underline{\p};L^q(\Q)),\\
\bfs(\underline{\bfv}^N)&\rightharpoondown \bfs(\underline{\bfv}) \quad\text{in}\quad L^{q'}(\underline{\Omega},\underline{\F},\underline{\p};L^{q'}(\Q)),\\
\bfS(\ep(\underline{\bfv}^N))&\rightharpoondown \underline{\tilde{\bfS}} \quad\text{in}\quad L^{p'}(\underline{\Omega},\underline{\F},\underline{\p};L^{p'}(\Q)),\\
\bfS(\ep(\underline{\bfv}^N))&\rightharpoondown \underline{\tilde{\bfS}} \quad\text{in}\quad L^{p'}(\underline{\Omega},\underline{\F},\underline{\p};L^{p'}(0,T;W_0^{-1,p'}(G))),\\
\underline{\bfv}^N\otimes\underline{\bfv}^N&\rightharpoondown \underline{\bfv}\otimes\underline{\bfv}\quad\text{in}\quad L^{\frac{q}{2}}(\underline{\Omega},\underline{\F},\underline{\p};L^{\frac{q}{2}}(\Q)),\\
\Phi(\underline{\bfv}^N)&\rightharpoondown \Phi(\underline{\bfv})\quad\text{in}\quad L^2(\underline{\Omega},\underline{\F},\underline{\p}; L^2(0,T;L_2(U,L^2(G)))).
\end{aligned}
\end{align}
We gain from (\ref{eq:convWm'})--(\ref{eq:conv'}) the limit equation
\begin{align}\label{eq:limitu'}
\int_G&\underline{\bfv}(t)\cdot\bfvarphi\dx+\int_0^t\int_G\underline{\tilde{\bfS}}:\nabla\bfphi\dxs+\int_0^t\int_G\bfs(\underline{\bfv}) \cdot\bfphi\dxs\\&
+\int_0^t\int_G\underline{\bfv}\otimes\underline{\bfv}:\nabla\bfphi\dxs=\int_0^t\int_G\underline{\bff}\cdot\bfphi\dxs+\int_G\int_0^t \Phi(\underline{\bfv})\,\dd\underline{\bfW}_\sigma\cdot \bfvarphi\dx\nonumber
\end{align}
 for all $\bfphi\in C^\infty_{0,\Div}(G)$. The limit in the stochastic term needs some explanations.
We have the convergences
\begin{align*}
\underline{\bfW}^N&\longrightarrow \underline{\bfW}\quad\text{in}\quad C([0,T],U_0),\\
\Phi(\underline{\bfv}^N)&\longrightarrow \Phi(\underline{\bfv})\quad\text{in}\quad L^2(0,T; L_2(U,L^2(G))),
\end{align*}
in probability. For the second one we use (\ref{eq:phi}) and (\ref{eq:compactdelta'}). These convergences imply
\begin{align*}
\int_0^t \Phi(\underline{\bfv}^N)\,\dd\underline{\bfW}^N_\sigma\longrightarrow\int_0^t \Phi(\underline{\bfv})\,\dd\underline{\bfW}_\sigma\quad\text{in}\quad L^2(0,T; L^2(G))
\end{align*}
in probability by \cite{DeGlTe}, Lemma 2.1. So we can pass to the limit in the stochastic integral.\\
Now, it remains to show
\begin{align}
\underline{\tilde{\bfS}}=\bfS(\ep(\underline{\bfv}))\label{eq:StildeS'}.
\end{align}
We will apply monotone operator theory to verify (\ref{eq:StildeS'}). Equation (\ref{eq:limitu'}) implies using $\int_G\underline{\bfv}\otimes\underline{\bfv}:\nabla\underline{\bfv}\dx=0$
\begin{align*}
\frac{1}{2}&\|\underline{\bfv}(t)\|_{L^2(G)}^2
=\frac{1}{2}\|\underline{\bfv}_0\|_{L^2(G)}^2-\int_0^t\int_{G}\underline{\tilde{\bfS}}:\ep(\underline{\bfv})\dxs
-\int_0^t\int_{G}\bfs(\underline{\bfv})\cdot\underline{\bfv}\dxs
\\
&+\int_0^t\int_{G}\underline{\bff}\cdot\underline{\bfv}\dxs
+\int_G\int_0^t\underline{\bfv}\cdot\Phi(\underline{\bfv})\,\dd\underline{\bfW}_\sigma\dx+\frac{1}{2}\int_G\int_0^t\,\dd\Big\langle\int_0^\cdot\Phi(\underline{\bfv})\,\dd\underline{\bfW}\Big\rangle_\sigma\dx.
\end{align*}
Here we applied It\^{o}'s formula to $f(\bfw)=\frac{1}{2}\|\bfw\|_{L^2(G)}^2$. 
Subtracting this from the formula for $\|\underline{\bfv}^N(t)\|_{L^2(G)}^2$ and applying expectation shows
\begin{align*}
&\underline{\E}\bigg[\int_\Q\big(\bfS(\nabla\underline{\bfv}^N)-\bfS(\nabla\underline{\bfv})\big):\nabla\big(\underline{\bfv}^N-\underline{\bfv}\big)\dxs\bigg]\\
&+\underline{\E}\bigg[\int_\Q\big(\bfs(\underline{\bfv}^N)-\bfs(\underline{\bfv})\big):\big(\underline{\bfv}^N-\underline{\bfv}\big)\dxs\bigg]\\
&=\frac{1}{2}\underline{\E}\bigg[-\int_G |\underline{\bfv}^N(T)|^2\dx+\int_G |\underline{\bfv}(T)|^2\dx+\int_G |\mathcal P^N\underline{\bfv}^N_0|^2\dx-\int_G |\underline{\bfv}_0|^2\dx\bigg]\\
&+\underline{\E}\bigg[\int_\Q\big(\underline{\tilde{\bfS}}-\bfS(\ep(\underline{\bfv}^N))\big):\ep(\underline{\bfv})\dxs-\int_\Q\bfS(\ep(\underline{\bfv})):\ep\big(\underline{\bfv}^N-\underline{\bfv}\big)\dxs\bigg]\\
&+\underline{\E}\bigg[\int_\Q\big(\bfs(\underline{\bfv})-\bfs(\underline{\bfv}^N)\big):\underline{\bfv}\dxs-\int_\Q\bfs(\underline{\bfv})\cdot\big(\underline{\bfv}^N-\underline{\bfv}\big)\dxs\bigg]\\
&+\underline{\E}\bigg[\int_\Q\bff\cdot(\underline{\bfv}^N-\underline{\bfv})\dxs
+\frac{1}{2}\int_G\int_0^T \,\dd\Big\langle\int_0^\cdot\Phi(\underline{\bfv}^N)\,\dd\underline{\bfW}^{N,N}\Big\rangle_\sigma\dx\bigg]\\
&-\frac{1}{2}\,\underline{\E}\bigg[\int_G\int_0^T\,\dd\Big\langle\int_0^\cdot\Phi(\underline{\bfv})\,\dd\underline{\bfW}\Big\rangle_\sigma\dx\bigg].
\end{align*}
Letting $N\rightarrow\infty$ shows using (\ref{eq:conv'}) and monotonicity of $\bfS$
\begin{align*}
\lim_N&\bigg(\underline{\E}\bigg[\int_\Q\big(\bfS(\ep(\underline{\bfv}^N))-\bfS(\ep(\underline{\bfv}))\big):\ep\big(\underline{\bfv}^N-\underline{\bfv}\big)\dxt\bigg]\\
%&+\underline{\E}\bigg[\int_\Q\big(\bfs(\underline{\bfv}^N)-\bfs(\underline{\bfv})\big):\big(\underline{\bfv}^N-\underline{\bfv}\big)\dxt\bigg]\bigg)\\
&\leq \,\frac{1}{2}\lim_N\underline{\E}\bigg[\int_G\int_0^T \,\dd\bigg(\Big\langle\int_0^\cdot\Phi(\underline{\bfv}^N)\,\dd\underline{\bfW}^{N,N}\Big\rangle_\sigma-\Big\langle\int_0^\cdot\Phi(\underline{\bfv})\,\dd\underline{\bfW}\Big\rangle_\sigma\bigg)\dx\bigg].
\end{align*}
Here we used $\liminf_N\underline{\E}\big[\int_G\big(|\underline{\bfv}^N(T)|^2-|\underline{\bfv}(T)|^2\big)\dxs\big]\geq0$ which follows by lower semi-continuity and weak convergence of $\underline{\bfv}^N(T)$.
On account of (\ref{eq:convWm'})
and (\ref{eq:compactdelta'}) together with (\ref{eq:phi}) we gain for the last integral
\begin{align*}
\underline{\E}\bigg[\int_G\int_0^T \,\dd\Big\langle\int_0^\cdot\Phi(\underline{\bfv}^N)\,\dd\underline{\bfW}^{N,N}\Big\rangle_\sigma\dx\bigg]\longrightarrow\underline{\E}\bigg[\int_G\int_0^T \,\dd\Big\langle\int_0^\cdot\Phi(\underline{\bfv})\,\dd\underline{\bfW}\Big\rangle_\sigma\dx\bigg],\quad N\rightarrow\infty.
\end{align*}
We finally obtain
\begin{align*}
 &\lim_{N\rightarrow\infty}\underline{\E}\bigg[\int_\Q\big(\bfS(\ep(\underline{\bfv}^N))-\bfS(\ep(\underline{\bfv}))\big):\ep\big(\underline{\bfv}^N-\underline{\bfv}\big)\dxt\bigg]=0.
% &\lim_{N\rightarrow\infty}\underline{\E}\bigg[\int_\Q\big(\bfs(\underline{\bfv}^N)-\bfs(\underline{\bfv})\big):\big(\underline{\bfv}^N-\underline{\bfv}\big)\dxt\bigg]=0.
\end{align*}
As a consequence of the monotonicity of $\bfS$ and $\bfs$ we have established
\begin{align*}
\ep(\underline{\bfv}^N)&\longrightarrow \ep(\underline{\bfv})\quad\underline{\p}\otimes\LL^{d+1}\text{-a.e.}.
%\underline{\bfv}^N&\longrightarrow \underline{\bfv}\quad\underline{\p}\otimes\LL^{d+1}\text{-a.e.}
\end{align*}
This implies (\ref{eq:StildeS'}) and the proof of Theorem \ref{thm:2.2} is hereby complete.
\end{proof}

\begin{remark}\label{rem:ps}
According to the remarks in \cite{IkWa} (beginning of the proof of Thm. 2.7. on p. 9) it is possible to choose the new probability space as
$$(\underline{\Omega},\underline{\F},\underline{\p})=([0,1);\mathcal B[0,1);\mathcal L^1);$$
especially it does not depend on the choice of $\alpha$.
\end{remark}

\begin{corollary}
\label{cor:2.3}
Let the assumptions of Theorem \ref{thm:2.2} be satisfied and in addition 
\begin{equation*}%\label{initial'}
\int_{L^2_{\Div}(G)}\big\|\bfu\big\|_{L^2(G)}^\beta\,\dd\Lambda_0(\bfu)<\infty,\quad
\int_{L^2(\Q)}\big\|\bfg\big\|_{L^2(\Q)}^\beta\,\dd\Lambda_\bff(\bfg)<\infty,
\end{equation*}
for some $\beta\geq2$. Then
there is a martingale weak solution (\ref{eq:stokes}) such that
\begin{align*}
\E&\bigg[\sup_{t\in(0,T)}\int_G |\underline{\bfv}(t)|^2\dx+\int_\Q |\nabla \underline{\bfv}|^p\dxt+\alpha\int_\Q |\underline{\bfv}|^q\dxt\bigg]^\frac{\beta}{2}\\&\leq c\,\E\bigg[1+\int_{L^2_{\Div}(G)}\big\|\bfu\big\|_{L^2(G)}^2\,\dd\Lambda_0(\bfu)+
\int_{L^2(Q)}\big\|\bfg\big\|_{L^2(Q)}^2\,\dd\Lambda_\bff(\bfg)\dxt\bigg]^\frac{\beta}{2},
\end{align*}
where $c$ is independent of $\alpha$.
\end{corollary}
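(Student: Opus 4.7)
The plan is to strengthen the a priori bound of Theorem \ref{thm:2.1} to the $(\beta/2)$-th moment at the Galerkin level, and then to re-run the Skorohod compactness construction of Theorem \ref{thm:2.2} verbatim. Since the tightness, passage to the limit, and monotone-operator identification carried out in Theorem \ref{thm:2.2} rely only on \emph{uniform} bounds coming from Theorem \ref{thm:2.1}, once the higher-moment Galerkin estimate is established the corresponding bound for the martingale weak solution $\underline{\bfv}$ will follow from Fatou's lemma and weak lower semicontinuity.

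First I would start from the pathwise It\^{o} identity \eqref{eq:2.1neu} for $\tfrac12\|\bfv^N(t)\|_{L^2(G)}^2$, take the supremum over $t\in[0,T]$, raise both sides to the power $\beta/2$, and then take expectation. The coercivity of $\bfS$ combined with Korn's inequality together with the $|\bfv^N|^{q-2}\bfv^N$ penalization give the desired dissipation on the left. The drift contribution from $\bff\cdot\bfv^N$ is handled by Young's inequality, producing a small $\delta\,\E\sup_{(0,T)}\|\bfv^N\|_{L^2}^\beta$ to be absorbed plus the good term $c(\delta)\,\E\|\bff\|_{L^2(\Q)}^\beta$. The trace of the quadratic variation ($J_3$ in the notation of Theorem \ref{thm:2.1}) is bounded by $c(1+\int_\Q|\bfv^N|^2\dxt)$ thanks to \eqref{eq:phi}, whose $(\beta/2)$-th power is controlled by Jensen's inequality plus $\E\int_0^T\|\bfv^N\|_{L^2}^\beta\dt$.

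The crux is the stochastic integral
\begin{align*}
J_2(t)=\int_G\int_0^t\bfv^N\cdot\Phi(\bfv^N)\,\dd\bfW^N_\sigma\dx,
\end{align*}
for which I would apply the Burkholder--Davis--Gundy inequality at the exponent $\beta/2$ together with Cauchy--Schwarz and \eqref{eq:phi}:
\begin{align*}
\E\Big[\sup_{(0,T)}|J_2|^{\beta/2}\Big]\le c\,\E\bigg[\Big(\sup_{(0,T)}\|\bfv^N\|_{L^2}^2\Big)^{\beta/4}\Big(\int_0^T\big(1+\|\bfv^N\|_{L^2}^2\big)\dt\Big)^{\beta/4}\bigg].
\end{align*}
Young's inequality with conjugate exponents $(2,2)$ splits this into $\delta\,\E\sup_{(0,T)}\|\bfv^N\|_{L^2}^\beta+c(\delta)\,\E\big(1+\int_0^T\|\bfv^N\|_{L^2}^2\dt\big)^{\beta/2}$, the second piece being dominated via Jensen by $c\big(1+\E\int_0^T\|\bfv^N\|_{L^2}^\beta\dt\big)$. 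With $\delta$ sufficiently small the first piece is absorbed on the left, and Gronwall's lemma applied to $t\mapsto\E\sup_{(0,t)}\|\bfv^N\|_{L^2}^\beta$ closes the estimate uniformly in $N$.

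The main obstacle is precisely this BDG--Young step: the outer exponent $\beta/2$ is exactly what permits the quadratic integrand in $J_2$ to be absorbed into the energy supremum, and any larger outer exponent would break the absorption. Once the uniform $(\beta/2)$-th moment bound is in hand, the Skorohod transfer and the identification of the nonlinear terms from Theorem \ref{thm:2.2} go through without change, because they only invoke the \emph{mode} of convergence rather than the particular integrability exponent; Fatou's lemma on $(\underline{\Omega},\underline{\F},\underline{\p})$ then yields the claimed bound for $\underline{\bfv}$.
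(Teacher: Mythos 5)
Your proposal follows the same route as the paper's proof: take the supremum in time and the $\beta/2$-th power of the pathwise It\^{o} energy identity \eqref{eq:2.1neu}, absorb the force and quadratic-variation contributions via Young/Jensen, control the martingale term $J_2$ by BDG at exponent $\beta/2$ followed by Cauchy--Schwarz, \eqref{eq:phi} and a Young split that puts a small $\delta\,\E\sup\|\bfv^N\|_{L^2}^\beta$ on the left and a Gronwall-closable integral term on the right, and finally transfer to the limit $\underline{\bfv}$ by equality of laws and lower semicontinuity. The argument is correct, and where you are more explicit than the paper (e.g.\ the Jensen step and the final Fatou transfer) you are filling in details the paper leaves implicit rather than diverging from it.
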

\begin{proof}
Taking the supremum w.r.t. time and the $\frac{\beta}{2}$-th power of (\ref{eq:2.1neu}) implies
\begin{align*}
\frac{1}{2}&\E\bigg[\sup_{(0,T)}\int_G|\bfv^N(t)|^2\dx\bigg]^\frac{\beta}{2}+\E\bigg[\int_0^T\int_G |\nabla\bfv^N|^p+\alpha|\bfv^N|^q\dxs\bigg]^{\frac{\beta}{2}}\\
&\leq c\,\E\bigg[1+\int_G |\bfv_0|^2\dx+\int_0^T\int_G |\bff||\bfv^N|\dxs\bigg]^\frac{\beta}{2}\\&+c\,\E\bigg[\sup_{(0,T)}\Big|\int_G\int_0^t\bfv^N\cdot\Phi(\bfv^N)\,\dd\bfW^N_\sigma\dx\Big|\bigg]^\frac{\beta}{2}\\
&+c\,\E\bigg[\int_G\int_0^T\,\dd\Big\langle\int_0^\cdot\Phi(\bfv^N)\,\dd\bfW^N\Big\rangle_\sigma\dx\bigg]^\frac{\beta}{2}.
\end{align*}
Obviously it holds
\begin{align*}
\E\bigg[&\int_0^T\int_G |\bff||\bfv^N|\dxs\bigg]^\frac{\beta}{2}\\&\leq c\,\E\bigg[\int_0^T\bigg(\int_G|\bfv^N|^2\dx\bigg)^{\frac{\beta}{2}}\ds\bigg]+c\,\E\bigg[\int_\Q |\bff|^2\dxt\bigg]^\frac{\beta}{2}.
\end{align*}
 Moreover, we have
as a consequence of Burgholder-Davis-Gundi inequality, (\ref{eq:phi}) and Young's inequality (similarly to the proof of Theorem \ref{thm:2.1})
\begin{align*}
\E\bigg[\sup_{t\in(0,T)}|\mathcal K(t)|\bigg]^{\frac{\beta}{2}}&:=\E\bigg[\sup_{t\in(0,T)}\Big|\int_0^t\int_G\bfv^N\cdot\Phi(\bfv^N)\dx\,\dd\bfW^N_\sigma\Big|\bigg]^{\frac{\beta}{2}}\\
&=\E\bigg[\sup_{t\in(0,T)}\Big|\int_0^t\sum_i\int_G\bfv^N\cdot g_i(\bfv^N)\dx\,\dd\beta_i(\sigma)\Big|\bigg]^{\frac{\beta}{2}}\\
&\leq c\,\E\bigg[\int_0^T\sum_i\bigg(\int_G\bfv^N\cdot g_i(\bfv^N)\dx\bigg)^2\dt\bigg]^{\frac{\beta}{4}}\\
&\leq c\,\E\bigg[\bigg(\int_0^T\bigg(\sum_{i=1}^N \int_G|\bfv^N|^2\dx\int_G |g_i(\bfv^N)|^2\dx\bigg)\dt\bigg]^{\frac{\beta}{4}}\\
&\leq c\,\E\bigg[1+\int_0^T\bigg( \int_G|\bfv^N|^2\dx\bigg) ^2\dt\bigg]^{\frac{\beta}{2}}\\
&\leq \delta\,\E\bigg[\sup_{t\in(0,T)}\int_G|\bfv^N|^2\dx\bigg]^{\frac{\beta}{2}}+c(\delta)\,\E\bigg[1+\int_0^T\int_G|\bfv^N|^2\dxt\bigg]^{\frac{\beta}{2}}.
\end{align*}
So we have shown (choosing $\delta$ small enough)
\begin{align*}
&\E\bigg[\sup_{t\in(0,T)}\bigg(\int_G|\bfv^N(t)|^2\dx\bigg)^\frac{\beta}{2}\bigg]+\E\bigg[\int_\Q |\nabla\bfv^N|^p+\alpha|\bfv^N|^q\dxt\bigg]^{\frac{\beta}{2}}\\
&\leq c\,\E\bigg[1+\int_G|\bfv_0|^2\dx+\int_\Q |\bff|^2\dxt+c\,\E\bigg[\int_0^T\bigg(\int_G|\bfv^N|^2\dx\bigg)^{\frac{\beta}{2}}\ds\bigg].
\end{align*}
Gronwall's Lemma implies
\begin{align}\label{eq:aprimpro}
\begin{aligned}
&\E\bigg[\sup_{t\in(0,T)}\int_G|\bfv^N(t)|^2\dx\bigg]^{\frac{\beta}{2}}+\E\bigg[\int_\Q |\nabla\bfv^N|^p+\alpha|\bfv^N|^q\dxt\bigg]^\frac{\beta}{2}\\
&\leq c\,\E\bigg[1+\int_G|\bfv_0|^2\dx+\int_\Q |\bff|^2\dxt\bigg]^\frac{\beta}{2}
\end{aligned}
\end{align}
which gives the claimed inequality.
\end{proof}

\section{Non-stationary flows}
\label{sec:stochmain}
In this section we prove Theorem \ref{thm:main}. The proof is divided in several steps. First we approximate the equation by an equation satisfying the assumptions from the last section. Due to Theorem \ref{thm:2.1} we have a solution to this approximated system. Then we obtain a priori estimates and follow the weak convergence of a subsequence. In the second step we prove compactness of the approximated velocity. In order to pass to the limit in the nonlinear stress deviator we use the $L^\infty$-truncation and monotone operator theory.\\\

\textbf{Step 1: a priori estimates and weak convergence}\\
Let us consider the equation
\begin{align}\label{eq:appr}
\begin{cases}\dd\bfv=&\Div\bfS(\ep(\bfv))\dt-\frac{1}{m}|\bfv|^{q-2}\bfv\dt+\nabla\pi\dt\\&
-\Div\big(\bfv\otimes\bfv\big)\dt+\bff\dt+\Phi(\bfv)\,\dd\bfW_t\\
\bfv(0)=&\bfv_0\end{cases}.
\end{align}
By Theorem \ref{thm:2.1} and Theorem \ref{thm:2.2} for $\alpha=\tfrac{1}{m}$ we know that there  a martingale solution
$$\big((\Omega,\mathcal F,(\mathcal F_t),\p),\bfv^m,\bfv_0^m,\bff^m,\bfW)$$ to (\ref{eq:appr}) with $\bfv^m\in\mathcal V_{p,q}$, $\Lambda_0=\p\circ(\bfv^m_0)^{-1}$ and $\Lambda_\bff=\p\circ(\bff^m)^{-1}$ (we skip the underlines for simplicity). To be precise there holds for all $\bfphi\in C^\infty_{0,\Div}(G)$
\begin{align*}
\int_G&\bfv^m(t)\cdot\bfvarphi\dx +\int_0^t\int_G\bfS(\ep(\bfv^m)):\ep(\bfphi)\dxs+\frac{1}{m}\int_0^t\int_G|\bfv^m|^{q-2}\bfv^m\cdot\bfphi\dxs\\&=\int_0^t\int_G\bfv^m\otimes\bfv^m:\ep(\bfphi)\dxs+\int_G\bfv^m_0\cdot\bfvarphi\dx
+\int_G\int_0^t\bff^m\cdot\bfphi\dxs+\int_G\int_0^t\Phi(\bfv^m)\,\dd\bfW_\sigma\cdot \bfvarphi\dx.
\end{align*}
Note that due to remark \ref{rem:ps} the probability space can be choosen independently of $m$. The same is true for the Brownian motion $\bfW$. Theorem \ref{thm:2.1} yields uniform estimates for $\bfv^m$ in
$$ L^2(\Omega,\F,\p;L^\infty(0,T;L^2(G)))\cap L^p(\Omega,\F,\p;L^p(0,T;W_{0,\Div}^{1,p}(G))).$$
Using Corollary \ref{cor:2.3} and the assumptions on $\Lambda_\bff$ and $\Lambda_0$ in \eqref{initial} we even gain
\begin{align}\label{eq:aprimpro}
\E\bigg[\sup_{t\in(0,T)}\bigg(\int_G|\bfv^m(t)|^2\dx\bigg)^{\frac{\beta}{2}}\bigg]+\E\bigg[\int_\Q |\nabla\bfv^m|^p+\frac{|\bfv^m|^q}{m}\dxt\bigg]^{\frac{\beta}{2}}
\leq c(\beta).
\end{align}
This implies together with a parabolic interpolation and the choice of $\beta$
\begin{align}\label{eq:aprimpro2}
\E\bigg[\int_\Q |\bfv^m|^{r_0}\dxt\bigg]\leq c\quad\text{for all }r_0:=p\frac{d+2}{d}
\end{align}
uniformly in $m$. From this, (\ref{eq:aprimpro}) and the assumption $p>\frac{2d+2}{d+2}$ we gain
\begin{align}\label{eq:aprimpro2'}
\E\bigg[\int_\Q |\bfv^{m}\otimes \bfv^m|^{p_0}+\int_\Q |\nabla\big(\bfv^{m}\otimes \bfv^m\big)|^{p_0}\dxt\bigg]\leq c 
\end{align}
for some $p_0>1$.
We obtain limit functions $\bfv$, $\tilde{\bfS}$, $\bfV$, $\tilde{\Phi}$ such that after passing to subsequences
\begin{align}\label{eq:conv2}
\begin{aligned}
\bfv^m&\rightharpoondown \bfv \quad\text{in}\quad  L^{\frac{\beta}{2}p}(\Omega,\F,\p;L^p(0,T;W^{1,p}_{0,\Div}(G))),\\
\bfv^m&\rightharpoondown \bfv \quad\text{in}\quad L^\beta(\Omega,\F,\p;L^r(0,T;L^2(G)))\quad \forall r<\infty\\
\frac{1}{m}|\bfv^m|^{q-2}\bfv^m&\rightarrow 0 \quad\text{in}\quad L^{\frac{\beta}{2}q'}(\Omega,\F,\p;L^{q'}(\Q)),\\
\bfS(\ep(\bfv^m))&\rightharpoondown \tilde{\bfS} \quad\text{in}\quad L^{p'}(\Omega,\F,\p;L^{p'}(\Q)),\\
\bfS(\ep(\bfv^m))&\rightharpoondown \tilde{\bfS} \quad\text{in}\quad L^{p'}(\Omega,\F,\p;L^{p'}(0,T;W^{-1,p'}(G))),\\
\bfv^{m}\otimes \bfv^m&\rightharpoondown \bfV \quad\text{in}\quad L^{p_0}(\Omega,\F,\p;L^{p_0}(0,T;W^{1,p_0}(G))),\\
\Phi(\bfv^m)&\rightharpoondown \tilde{\Phi} \quad\text{in}\quad L^\beta(\Omega,\F,\p;L^r(0,T;L_2(U,L^2(G))))\quad\forall r<\infty.
\end{aligned}
\end{align}
Moreover, we know
\begin{align*}
 \bfv &\in L^\beta(\Omega,\F,\p;L^\infty(0,T;L^2(G)),\\ \tilde{\Phi} &\in L^\beta(\Omega,\F,\p;L^\infty(0,T;L_2(U,L^2(G)))).
\end{align*}

In order to introduce the pressure we set
\begin{align*}
\bfH_1^m&:=\bfS(\ep(\bfv^m)),\\ \bfH_2^m&:=\nabla\Delta^{-1}\bff^m+\nabla\Delta^{-1}\Big(\frac{1}{m}|\bfv^m|^{q-2}\bfv^m\Big)+\bfv^{m}\otimes\bfv^m
,\\
\Phi^m&:=\Phi(\bfv^m).
\end{align*}
Using Theorem \ref{thm:pi}, Corollary \ref{cor:neu} and Corollary \ref{cor:pi} we obtain functions $\pi^m_h$, $\pi^m_{1}$, $\pi^m_{2}$ adapted to $(\underline{\mathcal F_t})$ and $\Phi^m_\pi$ progressively measurable such that
\begin{align}
\int_G&\big(\bfv^m-\nabla\pi^m_h\big)(t)\cdot\bfvarphi\dx \nonumber\\&=\int_G\bfv^m_0\cdot\bfvarphi\dx
-\int_G\int_0^t\big(\bfH_1^m-\pi_{1}^m I\big):\nabla\bfphi\dxs+\int_G\int_0^t\Div\big(\bfH_2^m-\pi_{2}^m I\big)\cdot\bfphi\dxs\nonumber\\
&+\int_G\int_0^t\Phi^m\,\dd\bfW_\sigma\cdot \bfvarphi\dx+\int_G\int_0^t\Phi_\pi^m\,\dd\bfW_\sigma\cdot \bfvarphi\dx.\label{eq:diffpi0}
\end{align}
The following bounds hold uniformly in $m$\footnote{Here we use the continuity of $\nabla\Delta^{-1}$ from $L^{p_0}(G)$ to $W^{1,p_0}(G)$.}
\begin{align}\label{eq:convu}
\begin{aligned}
\bfH_1^m&\in L^{\frac{\beta}{2}p'}(\Omega,\F,\p,L^{p'}(\Q)),\\
\bfH_2^m&\in L^{p_0}(\Omega,\F,\p,L^{p_0}(0,T;W^{1,p_0}(G))),\\
\Phi^m&\in L^\beta(\Omega,\F,\p,L^\infty(0,T;L_2(U,L^2(G)))).
\end{aligned}
\end{align}
We have the same uniform bounds for the pressure functions
\begin{align}\label{eq:convpi0}
\begin{aligned}
\pi_h^m&\in L^\beta(\Omega,\F,\p,L^\infty(0,T;L^{2}(G))),\\
\pi_{1}^m&\in L^{\frac{\beta}{2}p'}(\Omega,\F,\p,L^{p'}(\Q)),\\
\pi_{2}^m&\in L^{p_0}(\Omega,\F,\p,L^{p_0}(0,T;W^{1,p_0}(G)))),\\
\Phi_\pi^m&\in L^\beta(\Omega,\F,\p,L^\infty(0,T;L_2(U,L^2(G)))).
\end{aligned}
\end{align}
Here we used Corollary \ref{cor:neu} and Corollary \ref{cor:pi}. 
For the harmonic pressure we obtain using regularity theory for harmonic functions and Corollary \ref{cor:pir}
\begin{align}\label{eq:harmpi}
\pi_h^m&\in L^\beta(\Omega,\F,\p;L^r(0,T;W^{k,\infty}_{loc}(G)))
\end{align}
for all $k\in\N$. After passing to subsequences (not relabeled) we have the following convergences (where $r<\infty$ is arbitrary)
 \begin{align}\label{eq:convpi}
\begin{aligned}
\pi_h^m&\rightharpoondown\pi_h\qquad\text{in}\quad L^\beta(\Omega,\F,\p;L^r(0,T;W^{k,r}_{loc}(G))),\\
\pi_{1}^m&\rightharpoondown\pi_{1}\,\qquad\text{in}\quad L^{\frac{\beta}{2}p'}(\Omega,\F,\p,L^{p'}(\Q)),\\
\pi_{2}^m&\rightharpoondown\pi_{2}\,\qquad\text{in}\quad L^{p_0}(\Omega,\F,\p,L^{p_0}(0,T;W^{1,p_0}(G)))),\\
\Phi_\pi^m&\rightharpoondown\Phi_\pi\,\qquad\text{in}\quad L^\beta(\Omega,\F,\p,L^r(0,T;L_2(U,L^2(G)))).
\end{aligned}
\end{align}
In the following we need to show that the limit functions in \eqref{eq:conv2} satisfy $\bfV=\bfv\otimes\bfv$ and $\tilde{\Phi}=\Phi(\bfv)$ hold. We will realize this by compactness arguments and a change of the probability space similar to the proof of Theorem \ref{thm:2.2}. After this, in a final step, we will show $\tilde{\bfS}=\bfS(\ep(\bfv))$.\\\

\textbf{Step 2: compactness}\\
Now we will show compactness of $\bfv^m$ where we follow ideas from \cite{Ho1}, section 4. In order to include the pressure in
the compactness method we have to deal with weak convergences. This situation is not covered by the classical Skorokhod Theorem. However, a generalization of it -- the Jakubowski-Skorokhod Theorem, see \cite{jakubow} -- applies to quasi-polish spaces (also used in \cite{BrHo}). This includes weak topologies of Banach spaces.\\
 First we deduce from (\ref{eq:appr})-(\ref{eq:aprimpro2})
\begin{align*}
\E\bigg[\Big\|\bfv^m(t)-\int_0^t\Phi(\bfv^m)\,\dd\bfW_\sigma\Big\|_{W^{1,p_0}([0,T];W_{\Div}^{-1,p_0}(G))}\bigg]\leq c.
\end{align*}
For the stochastic term we quote from \cite{Ho1}, proof of Lemma 4.6, for some $\mu=\mu(d,p)>0$
\begin{align*}
\E\bigg[\Big\|\int_0^t\Phi(\bfv^m)\,\dd\bfW_\sigma\Big\|_{C^{\mu}([0,T]; L^2(G))}\bigg]\leq c\bigg(1+\int_{\Omega\times \Q}|\bfv^m|^{r_0}\dxt\,\dd\p\bigg)\leq c.
\end{align*}
This is a consequence of (\ref{eq:aprimpro}), with $r_0>2$ and assumption (\ref{eq:phi}). Combining the both informations above shows
\begin{align}\label{eq:vt1}
\E\Big[\|\bfv^m\|_{C^{\mu}([0,T]; W_{\Div}^{-1,p_0}(G))}\Big]\leq c.
\end{align}
and also for some $\lambda>0$
\begin{align}\label{eq:vt2}
\E\Big[\|\bfv^m\|_{W^{\lambda,p_0}(0,T; W_{\Div}^{-1,p_0}(G))}\Big]\leq c.
\end{align}
An interpolation with $L^{p_0}(0,T; W_{0,\Div}^{1,p_0}(G))$ yields on account of (\ref{eq:aprimpro}) for some $\kappa>0$ (see \cite{Am}, Thm. 3.1)
\begin{align}\label{eq:vt3}
\E\Big[\|\bfv^m\|_{W^{\kappa,p_0}(0,T; L^{p_0}_{\Div}(G))}\Big]\leq c.
\end{align}
As a consequence of $p>\tfrac{2d+2}{d+2}$ we have
\begin{align*}
W^{\kappa,p_0}(0,T;L^{p_0}_{\Div}(G))\cap L^p(0,T;W^{1,p}_{0,\Div}(G))\hookrightarrow L^{r}(0,T;L^{r}_{\Div}(G))
\end{align*}
compactly for all $r<p\tfrac{d+2}{d}$. We will use this embedding in order to show compactness of $\bfv^m$. Similarly, we argue for the harmonic pressure $\pi^m_h$. We have compactness of the embedding
\begin{align*}
L^\infty(0,T;L^2(G))\cap\left\{\Delta u(t)=0\,\,\text{for a.e. }t\right\}\hookrightarrow L^r(0,T;L^r_{loc}(G))
\end{align*} which follows from local regularity theory for harmonic functions and Lebesgue's Theorem about dominated convergence (cf. \cite{Wo}, (4.24)).\\
%\marginpar{Include $\pi_h$, $L^\infty(L^2)\cap harm$ is compact in $L^2(L^2)$}
%So we need to show tightness of the measure $\bfnu^m$ which is the joint law of
%\begin{align*}
%\bfnu^m:=\nu_{{\bfv}^m}\otimes\nu_{\pi^m_h}\otimes\nu_{\pi^m_1}\otimes\nu_{\pi^m_2}\otimes\nu_{\Phi_\pi^m}\otimes \nu_\bfW\otimes\Lambda_{0}\otimes\Lambda_{\bff}.
%\end{align*}
We consider th path space
\begin{align*}
\mathscr V:=& L^r(0,T;L^r_{\Div}(G))\otimes L^r(0,T;L^r_{loc}(G))\otimes\big(L^{p'}(\Q),w\big)\otimes\big(L^{p_0}(0,T;W^{1,p_0}(G))),w\big)\\&\otimes\big(L^r(0,T;L_2(U,L^2(G))),w\big)\otimes C([0,T],U_0)\otimes L^2(G)\otimes L^2(\Q)
\end{align*}
We will use the following notations ($w$ refers to the weak topology):
\begin{itemize}
\item $\nu_{\bfv^m}$ is the law of $\bfv^m$ on $L^r(0,T;L^r(G))$;
\item $\nu_{\pi^m_h}$ is the law of $\pi_h^m$ on $L^r(0,T;L^r_{loc}(G))$;
\item $\nu_{\pi^m_1}$ is the law of $\pi_1^m$ on $\big(L^{p'}(\Q),w\big)$;
\item $\nu_{\pi^m_2}$ is the law of $\pi_2^m$ on $\big(L^{p_0}(0,T;W^{1,p_0}(G))),w\big)$;
\item $\nu_{\Phi^m_\pi}$ is the law of $\Phi_\pi^m$ on $\big(L^r(0,T;L_2(U,L^2(G))),w\big)$;
\item $\nu_\bfW$ is the law of $\bfW$ on $C([0,T],U_0)$, where $U_0$ is defined in (\ref{eq:U0});
\item $\bfnu^m$ is the joint law of $\bfv^m$, $\pi^m_h$, $\pi^m_1$, $\pi^m_2$, $\Phi_\pi^m$, $\bfW$, $\bfv_0$ and $\bff$ on $\mathscr V$.
%\item $\nu_{0}$ is the law of $\underline{\bfv}_0$ on $L^2_{\Div}(G)$;
%\item $\Lambda_{\underline{\bff}}$ is the law of $\underline{\bff}^m$ on $L^2(\Q)$.
\end{itemize}
We need to show tightness of the measure $\bfnu^m$.\\
We consider the ball $\mathcal B_R$ in the space $W^{\kappa,p_0}(0,T;L^{p_0}_{\Div}(G))\cap L^p(0,T;W^{1,p}_{\Div}(G))$
and obtain for its complement $\mathcal B_R^C$ by (\ref{eq:aprimpro2}) and (\ref{eq:vt3})
\begin{align*}
\nu_{\bfv^m}&(\mathcal B_R^C)=\p\big(\|\bfv^m\|_{W^{\kappa,p_0}(0,T;L^{p_0}_{\Div}(G))}+\|\bfv^m\|_{ L^p(0,T;W^{1,p}_{\Div}(G))}\geq R\big)\\&
\leq \frac{1}{R}\,\E\Big(\|\bfv^m\|_{W^{\kappa,p_0}(0,T;L^{p_0}_{\Div}(G))}+\|\bfv^m\|_{ L^p(0,T;W^{1,p}_{\Div}(G))}\Big)
\leq \frac{c}{R}.
\end{align*}
So for a fixed $\eta>0$ we find $R(\eta)$ with
\begin{align*}
\nu_{\bfv^m}(\mathcal B_{R(\eta)})&\geq 1-\frac{\eta}{8}.
\end{align*}
Using (\ref{eq:harmpi}) we can show that also the law of $\pi^m_h$ is tight; i.e. there exists a compact set
$C_\pi\subset L^r(0,T;L^r_{loc}(G))$ such that $\nu_{\pi^m_h}(C_\pi)\geq1-\tfrac{\eta}{8}$. Due to the reflexivity of the corresponding spaces
we find compact sets for $\pi_1^m,\pi_2^m$ and $\Phi_\pi^m$ with measures greater or equal then $1-\tfrac{\eta}{8}$.
The law $\nu_{\bfW}$ is tight as it coincides with the law of $\bfW$ which is a Radon measure on the Polish space $C([0,T],U_0)$. So, there exists a compact set
$C_\eta\subset C([0,T],U_0)$ such that $\nu_{\bfW^m}(C_\eta)\geq1-\tfrac{\eta}{8}$. By the same argument we can find compact subsets of $L^2_{\Div}(G)$ and $L^2(\Q)$ such that their measures ($\Lambda_0$ and $\Lambda_{\bff}$) are smaller than $1-\frac{\eta}{8}$.
Hence, we can find a compact subset %(recall assumption (\ref{eq:fv0compact}))
$\mathscr V_\eta\subset \mathscr V$
such that $\bfnu^m(\mathscr V_\eta)\geq1-\eta$. Thus, $\left\{\bfnu^m,\,\,m\in\N\right\}$ is tight in the same space.
On account of the Jakubowski-Skorohod Theorem \cite{jakubow}
we can infer the existence of a probability space $(\underline{\Omega},\underline{\F},\underline{\p})$, a sequence $(\underline{\bfv}^N,\underline{\pi}_h^m,\underline{\pi}_1^m,\underline{\pi}_2^m,\underline{\Phi}_\pi^m,\underline{\bfW}^m,\underline{\bfv}_0^m,\underline{\bff}^m)$ and $(\underline{\bfv},\underline{\pi}_h,\underline{\pi}_1,\underline{\pi}_2,\underline{\Phi}_\pi,\underline{\bfW},\underline{\bfv}_0,\underline{\bff})$ on $(\underline{\Omega},\underline{\F},\underline{\p})$ both with values in $\mathscr V$ such that the following holds
\begin{itemize}
\item The laws of $(\underline{\bfv}^m,\underline{\pi}_h^m,\underline{\pi}_1^m,\underline{\pi}_2^m,\underline{\Phi}_\pi^m,\underline{\bfW}^m,\underline{\bfv}_0^m,\underline{\bff}^m)$ and $(\underline{\bfv},\underline{\pi}_h,\underline{\pi}_1,\underline{\pi}_2,\underline{\Phi}_\pi,\underline{\bfW},\underline{\bfv}_0,\underline{\bff})$ under $\underline{\p}$ coincide with $\bfnu^m$ and $\bfnu:=\lim_m\bfnu^m$.
\item We have $\underline{\p}$-a.s. the weak convergences 
\begin{align*}
\underline{\pi}_1^m&\rightharpoondown \underline{\pi}_1\quad\,\text{in}\,\quad L^{p'}(\Q),\\
\underline{\pi}_2^m&\rightharpoondown \underline{\pi}_2\quad\,\text{in}\,\quad L^{p_0}(0,T;W^{1,p_0}(G))),\\
\underline{\Phi}_\pi^m&\rightharpoondown \underline{\Phi}_\pi\quad\,\text{in}\,\quad L^r(0,T;L_2(U,L^2(G))).
\end{align*}
\item We have $\underline{\p}$-a.s. the strong convergences
\begin{align*}
\underline{\bfv}^m&\rightarrow \underline{\bfv}\,\,\,\,\quad\text{in}\quad L^r(0,T;L^r(G)),\\
\underline{\pi}_h^m&\rightarrow \underline{\pi}_h\quad\,\text{in}\,\quad L^r(0,T;L^r_{loc}(G)),\\
\underline{\bfW}^m&\rightarrow \underline{\bfW}\quad\,\text{in}\,\quad C([0,T],U_0),\\
\underline{\bfv}_0^m&\rightarrow \underline{\bfv}_0\quad\,\,\text{in}\quad L^2(G),\\
\underline{\bff}^m&\rightarrow \underline{\bff}\qquad\text{in}\quad L^2(0,T;L^2(G)).
\end{align*}
\item %The convergences in (\ref{eq:convu}) and (\ref{eq:convpi}) remain valid for the corresponding functions defined on $(\underline{\Omega},\underline{\F},\underline{\p})$. 
We have for all $\alpha<\infty$
\begin{align*}
\int_{\underline{\Omega}}\bigg(\sup_{[0,T]}\|\underline{\bfW}^m(t)\|_{U_0}^\alpha\bigg)\,\dd\underline{\p}=\int_{\Omega}\bigg(\sup_{[0,T]}\|\bfW(t)\|_{U_0}^\alpha\bigg)\,\dd\p.
\end{align*}
\end{itemize}
On account of the equality of laws we gain the weak convergences
\begin{align*}
\underline{\pi}_1^m&\rightharpoondown \underline{\pi}_1\quad\text{in}\quad L^{p'}(\underline{\Omega},\underline{\F},\underline{\p}, L^{p'}(\Q),\\
\underline{\pi}_2^m&\rightharpoondown \underline{\pi}_2\quad\text{in}\quad L^{p_0}(\underline{\Omega},\underline{\F},\underline{\p}, L^{p_0}(0,T;W^{1,p_0}(G))),\\
\underline{\Phi}_\pi^m&\rightharpoondown \underline{\Phi}_\pi\quad\text{in}\quad L^{p_0}(\underline{\Omega},\underline{\F},\underline{\p}, L^r(0,T;L_2(U,L^2(G)))),
\end{align*}
after choosing a subsequence, and by Vitali's convergence Theorem the strong convergences
\begin{align}\label{eq:convWm}
\underline{\bfW}^m&\longrightarrow\underline{\bfW}\quad\text{in}\quad L^2(\underline{\Omega},\underline{\F},\underline{\p};C([0,T],U_0)),\\\label{eq:compactdelta}
\underline{\bfv}^m&\longrightarrow \underline{\bfv}\quad\text{in}\quad L^r(\underline{\Omega}\times \Q;\underline{\p}\otimes\mathcal L^{d+1}),\\
\label{eq:compactpi}
\nabla^k\underline{\pi}_h^m&\longrightarrow \nabla^k\underline{\pi}_h\quad\text{in}\quad L^r(\underline{\Omega}\times(0,T)\times G' ;\underline{\p}\otimes\mathcal L^{d+1}),\\
\label{eq:compactv0}
\underline{\bfv}^m_0&\longrightarrow \underline{\bfv}_0\quad\text{in}\quad L^2(\underline{\Omega}\times G,\underline{\p}\otimes\mathcal L^{d+1}),\\
\label{eq:compactf}
\underline{\bff}^m&\longrightarrow \underline{\bff}\quad\text{in}\quad L^2(\underline{\Omega}\times \Q,\underline{\p}\otimes\mathcal L^{d+1}),
\end{align}
for all $r<p\tfrac{d+2}{d}$ and all $G'\Subset G$. For the harmonic pressure we used local regularity theory for harmonic maps.
This implies for all $\alpha<\infty$
\begin{align}\label{eq:conv_}
\begin{aligned}
\underline{\bfv}^{m}\otimes\underline{\bfv}^m&\rightharpoondown \underline{\bfv}\otimes\underline{\bfv}\quad\text{in}\quad L^{p_0}(\underline{\Omega},\underline{\F},\underline{\p}, L^{p_0}(0,T;W^{1,p_0}(G))),\\
\Phi(\underline{\bfv}^m)&\rightharpoondown \Phi(\underline{\bfv}) \quad\,\,\,\text{in}\quad  L^\beta(\underline{\Omega},\underline{\F},\underline{\p},L^\alpha(0,T;L_2(U,L^2(G)))),\\
\Phi_\pi(\underline{\bfv}^m)&\rightharpoondown \Phi_\pi(\underline{\bfv}) \quad\text{in}\quad  L^\beta(\underline{\Omega},\underline{\F},\underline{\p},L^\alpha(0,T;L_2(U,L^2(G)))).
\end{aligned}
\end{align}
%Since $\pi_h$ is harmonic we get for all $\beta<\infty$ and all $k\in\N$
%\begin{align}\label{eq:compactdeltapi}
%\underline{\pi}_h^m\longrightarrow 0\quad\text{in }L^\beta(\Omega\times(0,T);W^{k,2}_{loc}(G)).
%\end{align}
Again we define $(\underline{\mathcal F}_t)$ be the $\underline{\p}$-augmented canonical filtration of the process $(\underline{\bfv},\underline{\pi}_h,\underline{\pi}_1,\underline{\pi}_2,\underline{\Phi}_\pi,\underline{\bfW},\underline{\bff})$, respectively, that is
\begin{equation*}
\begin{split}
\underline{\mathcal F}_t&=\sigma\Big(\sigma\big(\bfr_t\underline{\bfv},\bfr_t\underline{\pi}_h,\bfr_t\underline{\pi}_1,\bfr_t\underline{\pi}_2,\bfr_t\underline{\Phi}_\pi,\bfr_t\underline{\bfW},\bfr_t\underline{\bff})\cup\big\{N\in\underline{\mathcal F};\;\underline{\p}(N)=0\big\}\Big),\quad t\in[0,T].
\end{split}
\end{equation*}
As done in the proof of Theorem \ref{thm:2.2} (but using test-functions from $C_0^\infty(G)$ instead of $C_{0,\Div}^\infty(G)$) we can show that
the equation also hold on the new probability space, i.e. we have $\underline{\p}\otimes\mathcal L^1$-a.e.
 for all $\bfphi\in C^\infty_{0}(G)$
\begin{align*}
\int_G&\big(\underline\bfv^m-\nabla\underline\pi^m_h\big)(t)\cdot\bfvarphi\dx \nonumber\\&=\int_G\underline\bfv^m_0\cdot\bfvarphi\dx
-\int_G\int_0^t\big(\underline\bfH_1^m-\underline\pi_{1}^m I\big):\nabla\bfphi\dxs+\int_G\int_0^t\Div\big(\underline\bfH_2^m-\underline\pi_{2}^m I\big)\cdot\bfphi\dxs\nonumber\\
&+\int_G\int_0^t\Phi(\underline\bfv_m)\,\dd\underline\bfW_\sigma\cdot \bfvarphi\dx+\int_G\int_0^t\underline\Phi_\pi^m\,\dd\underline\bfW_\sigma\cdot \bfvarphi\dx
\end{align*}
using the abbreviations
\begin{align*}
\underline\bfH_1^m&:=\bfS(\ep(\underline\bfv^m)),\\ \underline\bfH_2^m&:=\nabla\Delta^{-1}\underline\bff^m+\nabla\Delta^{-1}\Big(\frac{1}{m}|\underline\bfv^m|^{q-2}\underline\bfv^m\Big)+\bfv^{m}\otimes\bfv^m.
\end{align*}
From the convergences above we gain the limit equation (using again \cite{DeGlTe}, Lemma 2.1, for the convergence of the stochastic integral)
\begin{align}\label{eq:limitvdelta_}
\begin{aligned}
\int_G&\big(\underline\bfv-\nabla\underline\pi_h\big)(t)\cdot\bfvarphi\dx \\&=\int_G\underline\bfv_0\cdot\bfvarphi\dx
-\int_G\int_0^t\big(\underline\bfH_1-\underline\pi_{1} I\big):\nabla\bfphi\dxs+\int_G\int_0^t\Div\big(\underline\bfH_2-\underline\pi_{2} I\big)\cdot\bfphi\dxs\\
&+\int_G\int_0^t\Phi(\underline\bfv)\,\dd\underline\bfW_\sigma\cdot \bfvarphi\dx+\int_G\int_0^t\underline\Phi_\pi\,\dd\underline\bfW_\sigma\cdot \bfvarphi\dx
\end{aligned}
\end{align}
for all $\bfvarphi\in C^\infty_{0}(G)$, where
\begin{align*}
\underline\bfH_1&:=\underline{\tilde\bfS},\quad \underline\bfH_2:=\nabla\Delta^{-1}\underline\bff+\bfv\otimes\bfv.
\end{align*}
It remains to show $\underline{\tilde\bfS}=\bfS(\ep(\underline\bfv))$.
Now we let 
\begin{align*}
\underline{\bfG}_1^m&:=\bfS(\ep(\underline{\bfv}^m))-\underline{\tilde{\bfS}},\\ \underline{\bfG}_2^m&:=\nabla\Delta^{-1}\big(\underline{\bff}^m-\underline{\bff}\big)+\nabla\Delta^{-1}\Big(\frac{1}{m}|\underline{\bfv}^m|^{q-2}\underline{\bfv}^m\Big)+\underline{\bfv}^{m}\otimes\underline{\bfv}^m
-\underline{\bfv}\otimes\underline{\bfv},\\
\underline{\Phi}^m&:=\big(\Phi(\underline{\bfv}^m),-\Phi(\underline{\bfv})\big),\quad \underline{\Phi}_\vartheta^m:=\big(\Phi_\pi(\underline{\bfv}^m),-\Phi_\pi(\underline{\bfv})\big),\\
\underline{\vartheta}^m_{h}&=\underline{\pi}^m_{h}-\underline{\pi}_{h},\quad
\underline{\vartheta}^m_{1}=\underline{\pi}^m_{1}-\underline{\pi}_{1},\quad
\underline{\vartheta}^m_{2}=\underline{\pi}^m_{2}-\underline{\pi}_{2}.
\end{align*}
The following convergences are true%\footnote{Here we use the continuity of $\nabla\Delta^{-1}$ from $L^{p_0}(G)$ to $W^{1,p_0}(G)$.}
\begin{align}\label{eq:convu}
\begin{aligned}
\underline{\bfv}^m-\underline{\bfv}&\rightharpoondown 0\quad\text{in}\quad  L^{\frac{\beta}{2}p}(\underline{\Omega},\underline{\F},\underline{\p},L^p(0,T;W_0^{1,p}(G))),\\
\underline{\bfv}^m-\underline{\bfv}&\rightharpoondown 0\quad\text{in}\quad  L^\beta(\underline{\Omega},\underline{\F},\underline{\p},L^r(0,T;L^{2}(G)))\quad\forall r<\infty,\\
\underline{\bfG}_1^m&\rightharpoondown 0\quad\text{in}\quad  L^{\frac{\beta}{2}p'}(\underline{\Omega},\underline{\F},\underline{\p},L^{p'}(\Q)),\\
\underline{\bfG}_2^m&\rightharpoondown 0\quad\text{in}\quad  L^{p_0}(\underline{\Omega},\underline{\F},\underline{\p},L^{p_0}(0,T;W^{1,p_0}(G))),\\
%\nabla\underline{\bfG}_2^m&\rightharpoondown 0\quad\text{in}\quad  L^{p_0}(\underline{\Omega},\underline{\F},\underline{\p},L^{p_0}(\Q)),\\
\underline{\Phi}^m-\underline{\Phi}&\rightharpoondown0 \quad\text{in}\quad  L^\beta(\underline{\Omega},\underline{\F},\underline{\p},L^r(0,T;L_2(U,L^2(G))))\quad\forall r<\infty.
\end{aligned}
\end{align}
where $\underline{\Phi}=(\Phi(\underline{\bfv}),-\Phi(\underline{\bfv}))$. We have the same convergences for the pressure functions:
\begin{align}\label{eq:convpi}
\begin{aligned}
\underline{\vartheta}_h^m&\rightarrow0 \quad\text{in}\quad L^\beta(\underline{\Omega},\underline{\F},\underline{\p},L^r(0,T;W^{k,r}_{loc}(G)))\quad \forall r<\infty,\\
\underline{\vartheta}_{1}^m&\rightharpoondown 0\quad\text{in}\quad  L^{\frac{\beta}{2}p'}(\underline{\Omega},\underline{\F},\underline{\p},L^{p'}(\Q)),\\
\underline{\vartheta}_{2}^m&\rightharpoondown 0\quad\text{in}\quad L^{p_0}(\underline{\Omega},\underline{\F},\underline{\p},L^{p_0}(0,T;W^{1,p_0}(G))),\\
%\nabla\underline{\vartheta}_{2}^m&\rightharpoondown 0\quad\text{in}\quad  L^{p_0}(\underline{\Omega},\underline{\F},\underline{\p},L^{p_0}(\Q)),\\
\underline{\Phi}_\vartheta^m-\underline{\Phi}_\vartheta&\rightharpoondown 0\quad\text{in}\quad  L^\beta(\underline{\Omega},\underline{\F},\underline{\p},L^r(0,T;L_2(U,L^2(G))))\quad\forall r<\infty.
\end{aligned}
\end{align}
Moreover, we have 
\begin{align}\label{eq:harmpi*}
\begin{aligned}
\underline{\vartheta}_h^m&\in L^\beta(\underline{\Omega},\underline{\F},\underline{\p},L^\infty(0,T;L^{2}(G))),\\
\underline{\Phi}^m&\in  L^\beta(\underline{\Omega},\underline{\F},\underline{\p},L^\infty(0,T;L_2(U,L^2(G)))),\\
\underline{\Phi}_\vartheta^m&\in  L^\beta(\underline{\Omega},\underline{\F},\underline{\p},L^\infty(0,T;L_2(U,L^2(G)))),
\end{aligned}
\end{align}
uniformly in $m$.\\
The difference of approximated equation and limit equation reads as
\begin{align}
\int_G&\big(\underline{\bfv}^m-\underline{\bfv}+\nabla\underline{\vartheta}^m_h\big)(t)\cdot\bfvarphi\dx \nonumber\\&=\int_G\big(\underline{\bfv}^m(0)-\underline{\bfv}_0\big)\cdot\bfvarphi\dx
-\int_G\int_0^t\big(\underline{\bfG}_1^m-\underline{\vartheta}_{1}^m I\big):\nabla\bfphi\dxs+\int_G\int_0^t\Div\big(\underline{\bfG}_2^m-\underline{\vartheta}_{2}^m I\big)\cdot\bfphi\dxs\nonumber\\
&+\int_G\int_0^t\underline{\Phi}^m\,\dd(\underline{\bfW}^m_\sigma,\underline{\bfW}_\sigma)\cdot \bfvarphi\dx+\int_G\int_0^t\underline{\Phi}_\vartheta^m\,\dd(\underline{\bfW}^m_\sigma,\underline{\bfW}_\sigma)\cdot \bfvarphi\dx.\label{eq:diffpi0}
\end{align}
for all $\bfvarphi\in C^\infty_{0}(G)$.
In the following we will show that $\underline{\tilde{\bfS}}=\bfS(\ep(\underline{\bfv}))$ holds which will finish the proof of Theorem \ref{thm:main}. Therefore we introduce the sequence $\underline{\bfu}^{m}:=\underline{\bfv}^{m}-\nabla\underline{\vartheta}^{m}_h$ and the double sequence $\underline{\bfu}^{m,k}:=\underline{\bfu}^{m}-\underline{\bfu}^{k}$, $m\geq k$,
for which we have the convergences
\begin{align}\label{eq:convu1}
\underline{\bfu}^m&\rightharpoondown 0\quad\text{in}\quad  L^{p}(\underline{\Omega},\underline{\F},\underline{\p},L^p(0,T;W_0^{1,p}(G))),\\
\underline{\bfu}^m&\rightarrow 0\quad\text{in}\quad  L^r(\underline{\Omega}\times(0,T)\times G';\underline{\p}\otimes\mathcal L^{d+1}),\label{eq:convu2}
\end{align}
and the equation
\begin{align}
\int_G\underline{\bfu}^{m,k}(t)\cdot\bfvarphi\dx&=\int_G\underline{\bfu}^{m,k}_0\cdot\bfvarphi\dx \nonumber\\&-\int_G\int_0^t\big(\underline{\bfG}_1^{m,k}-\underline{\vartheta}_{1}^{m,k} I\big):\nabla\bfphi\dxs+\int_G\int_0^t\Div\big(\underline{\bfG}_2^{m,k}-\underline{\vartheta}_{2}^{m,k} I\big)\cdot\bfphi\dxs\nonumber\\
&+\int_G\int_0^t\underline{\Phi}^{m,k}\,\dd(\underline{\bfW}^m_\sigma,\underline{\bfW}^k_\sigma)\cdot \bfvarphi\dx+\int_G\int_0^t\underline{\Phi}_\vartheta^{m,k}\,\dd(\underline{\bfW}^m_\sigma,\underline{\bfW}^k_\sigma)\cdot \bfvarphi\dx\label{eq:diffpi}
\end{align}
for all $\bfvarphi\in C^\infty_{0}(G)$.
All involved quantities with superscript $^{m,k}$ are defined analogously to $\underline{\bfu}^{m,k}$ by taking an appropriate difference.
\\\

\textbf{Step 3: monotone operator theory and $L^\infty$-truncation}\\
By density arguments we are allowed to test with $\bfvarphi\in W_0^{1,p}\cap L^\infty(G)$.
Since the function $\underline{\bfv}(\omega,t,\cdot)$ does not belong to this class, the $L^\infty$-truncation was used for the deterministic problem (see \cite{FrMaSt} for the steady case and \cite{Wo} for the unsteady problem).
We will apply a variant of it adapted to the stochastic fashion.\\
We define $h_L$ and $H_L$, $L\in\N_0$, by
\begin{align*}
h_L(s)&:=\int_0^s \Psi_L(\theta)\theta\,\mathrm{d}\theta,\quad H_L(\bfxi):=h_L(|\bfxi|),\\
\Psi_L&:=\sum_{\ell=1}^L \psi_{2^{-\ell}},\quad \psi_\delta(s):=\psi(\delta s),
\end{align*}
where $\psi\in C^\infty_0([0,2])$, $0\leq\psi\leq1$, $\psi\equiv1$ on $[0,1]$ and $0\leq-\psi'\leq2$. Now we consider for $\eta\in C^\infty_0(G)$ the function
\begin{align*}
f_L(\bfv):=\int_G \eta H_L(\bfv)\dx
\end{align*}
and apply It\^{o}'s formula (see Lemma \ref{lems:Ito}).
This yields
\begin{align*}
\int_G \eta H_L(\underline{\bfu}^{m,k}(t))\dx=&f_L(\underline{\bfu}^{m,k}(0))+\int_0^t f'_L(\underline{\bfu}^{m,k})\,\dd\underline{\bfu}^{m,k}+\frac{1}{2}\int_0^t f_L''(\underline{\bfu}^{m,k})\,\dd\langle\underline{\bfu}^{m,k}\rangle_\sigma\\
=&\int_G \eta H_L(\underline{\bfu}^m_0-\underline{\bfu}_0^k)\dx\\
&-\int_G\int_0^t\eta\big(\underline{\bfG}_1^{m,k}-\underline{\vartheta}^{m,k}_{1}I\big)
:\nabla\big(\Psi_L(|\underline{\bfu}^{m,k}|)\underline{\bfu}^{m,k}\big)\dxs\\
&-\int_G\int_0^t\big(\underline{\bfG}_1^{m,k}-\underline{\vartheta}^{m,k}_{1}I\big)
:\nabla\eta\otimes\Psi_L(|\underline{\bfu}^{m,k}|)\underline{\bfu}^{m,k}\dxs\\
&+\int_G\int_0^t\eta\Psi_L(|\underline{\bfu}^{m,k}|)\Div\big(\underline{\bfG}_2^{m,k}-\underline{\vartheta}^{m,k}_{2}I\big)\cdot\underline{\bfu}^m\dxs\\
&+\int_G\int_0^t\eta\Psi_L(|\underline{\bfu}^{m,k}|)\underline{\bfu}^{m,k}\cdot\big(\Phi(\underline{\bfv}^{m,k})\,\dd\underline{\bfW}^m_\sigma-\Phi(\underline{\bfv}^k)\,\dd\underline{\bfW}^k_\sigma\big)\dx\\
&+\int_G\int_0^t\eta\Psi_L(|\underline{\bfu}^{m,k}|)\underline{\bfu}^{m,k}\cdot\big(\Phi_\vartheta(\underline{\bfv}^{m,k})\,\dd\underline{\bfW}^m_\sigma-\Phi_\vartheta(\underline{\bfv}^k)\,\dd\underline{\bfW}^k_\sigma\big)\dx\\
&+\frac{1}{2}\int_G\int_0^t\eta D^2H_L(\underline{\bfu}^{m,k})\,\dd\Big\langle\int_0^\cdot\Phi(\underline{\bfv}^{m})\,\dd\underline{\bfW}^m-\int_0^\cdot\Phi(\underline{\bfv^k})\,\dd\underline{\bfW}^k\Big\rangle_\sigma\dx\\
&+\frac{1}{2}\int_G\int_0^t\eta D^2H_L(\underline{\bfu}^{m,k})\,\dd\Big\langle\int_0^\cdot\Phi_\vartheta(\underline{\bfv}^{m})\,\dd\underline{\bfW}^m-\int_0^\cdot\Phi_\vartheta(\underline{\bfv}^k)\,\dd\underline{\bfW}^k\Big\rangle_\sigma\dx\\
=:&(O)+(I)+(II)+(III)+(IV)+(V)+(VI)+(VII).
\end{align*}
Equation (\ref{eq:compactv0}) and $\underline{\bfu}^{m}(0)-\underline{\bfu}^k(0)=\underline{\bfv}^m(0)-\underline{\bfv}^k(0)$ (see Theorem \ref{thm:pi} b) imply that $\underline{\E}\big[(O)\big]\rightarrow0$ if $m,k \rightarrow\infty$.
The aim of the following observations is to show that the expectation values of  $(II)-(VI)$ vanish for $m,k\rightarrow\infty$ which gives the same for $(I)$. By monotone operator theory this yields $\ep(\bfv^m)\rightarrow\ep(\bfv)$ a.e. Although the rough ideas are clear their rigorous proof is quite technical.\\
By construction of $\Psi_L$ we obtain, after passing to a subsequence,
\begin{align}\label{eq:convpsiu}
\Psi_L(|\underline{\bfu}^{m,k}|)\underline{\bfu}^{m,k}&\longrightarrow 0\quad\text{in}\quad L^r(\underline{\Omega}\times \Q,\underline{\p}\times \mathcal L^{d+1}),\quad m,k\rightarrow0,
\end{align}
for all $r<\infty$ (first, we have boundedness in $L^r$, then the strong convergence follows in combination with (\ref{eq:compactdelta})). This implies
\begin{align*}
\underline{\E}\big[(II)\big],\,\underline{\E}\big[(III)\big]
\longrightarrow0,\quad m,k\rightarrow\infty,
\end{align*}
as a consequence of (\ref{eq:convu}) and (\ref{eq:convpi}). %Moreover, we have
%\begin{align*}
%\underline{\E}\big[(IV)\big]
%\longrightarrow0,\quad m,k\rightarrow\infty.
%\end{align*}
%Here we used (\ref{eq:harmpi*}) and $\Psi_L(|\underline{\bfu}^{m,k}|)\underline{\bfu}^{m,k}\rightarrow 0$ in %$L^r(\underline{\Omega}\times G,\underline{\p}\otimes\mathcal L^d)$ for a.e. $t$ which follows from (\ref{eq:convpsiu}) by %choosing a subsequence.\footnote{In fact we assume w.l.o.g. the convergence for $t=T$. Otherwise we can find some $t$ %arbitrary close to $T$ with this property. This suffices to establish a.e. convergence at the end of the proof.}
Since $\underline{\E}[(IV)]=\underline{\E}[(V)]=0$ only $(VI)$ and $(VII)$ remain. We gain as $|D^2 H_L|\leq c(L)$ that
\begin{align*}
(VI)&\leq \,c\sum_{\ell=1}^d\,
\int_G\int_0^t\,\dd\, \Big\langle\int_0^\cdot\big(\Phi(\underline{\bfv}^m)-\Phi(\underline{\bfv}^k)\big)\,\dd\underline{\bfW}^m\Big\rangle^{\ell\ell}_\sigma\dx\\
&+c\sum_{\ell=1}^d\,\int_G\int_0^t \,\dd\,\Big\langle\int_0^\cdot\Phi(\underline{\bfv}^k)\,\dd\big(\underline{\bfW}^m-\underline{\bfW}^k\big)
\Big\rangle^{\ell\ell}_\sigma\dx\\
&+c\sum_{\ell=1}^d\,\int_G\int_0^t \,\dd\,\Big\langle\int_0^\cdot\big(\Phi(\underline{\bfv}^m)-\Phi(\underline{\bfv}^k)\big)\,\dd\underline{\bfW}^m\big),
\int_0^\cdot\big(\Phi(\underline{\bfv}^k)\,\dd\big(\underline{\bfW}^m-\underline{\bfW}^k\big)
\Big\rangle^{\ell\ell}_\sigma\dx\\
&\leq
\,c\sum_{\ell=1}^d\,\int_G\int_0^t\,\dd\, \Big\langle\int_0^\cdot\big(\Phi(\underline{\bfv}^m)-\Phi(\underline{\bfv}^k)\big)\,\dd\underline{\bfW}^m\Big\rangle^{\ell\ell}_\sigma\dx\\
&+c\sum_{\ell=1}^d\,\int_G\int_0^t\,\dd\, \Big\langle\int_0^\cdot\Phi(\underline{\bfv}^k)\,\dd\big(\underline{\bfW}^m-\underline{\bfW}^k\big)
\Big\rangle^{\ell\ell}_\sigma\dx\\
&=:c(VI)_1+c(VI)_2.
\end{align*}
We have by (\ref{eq:phi}) and (\ref{eq:compactdelta})
\begin{align*}
\underline{\E}\big[(VI)_1\big]
&\leq c\,\underline{\E}\bigg[\int_0^t\|\Phi(\underline{\bfv}^m)-\Phi(\underline{\bfv}^k)\|_{L_2(U,L^2(G))}^2\ds\bigg]\\
&\leq c\,\underline{\E}\bigg[\int_0^t\int_G|\underline{\bfv}^m-\underline{\bfv}^k|^2\dxs\bigg]\longrightarrow0,\quad m,k\rightarrow0.
\end{align*}
Moreover, since $\underline{\bfv}^k\in L^2(\underline{\Omega}\times \Q,\underline{\p}\otimes\mathcal L^{d+1})$ uniformly in $k$ we obtain by (\ref{eq:phi2}) and (\ref{eq:convWm})
\begin{align*}
\underline{\E}\big[(VI)_2\big]&=\,\underline{\E}\bigg[\int_0^T\sum_i\bigg(\int_G|g_i(\underline{\bfv}^k)|^2\Var\Big(\underline{\beta}_i^m(1)-\underline{\beta}_i^k(1)\Big)\dx\bigg)\dt\bigg]\\
&\leq\,\underline{\E}\bigg[\int_0^T\bigg(\int_G\sup_i i^2|g_i(\underline{\bfv}^k)|^2\dx\bigg)\dt\bigg]\sum_i\frac{1}{i^2}\Var\Big(\underline{\beta}_i^m(1)-\underline{\beta}_i^k(1)\Big)\\
%&\leq \underline{\E}\bigg[\int_0^T\int_G \sup_{i}i^2|g_i(\underline{\bfv}^k)|^2\dxt\bigg]\sum_i \frac{1}{i^2}\Var(\underline{\beta}_i^m(1)-\underline{\beta}_i^k(1))\\
&\leq c\,\underline{\E}\bigg[\int_0^T\int_G \big(1+|\underline{\bfv}^k|^2\big)\dxt\bigg]\underline{\E}\Big[\|\underline{\bfW}^m-\underline{\bfW}^k\|^2_{C([0,T],U_0)}\Big]\\
&\longrightarrow 0,\quad m,k\rightarrow\infty.
\end{align*}
As a consequence of Corollary \ref{cor:neu} (and the usage of the cut-off function $\eta$) we know that $\Phi_\vartheta$ inherits the properties of $\Phi$, so $(VII)$ can be estimated following
the same ideas.
Plugging all together, we have shown
\begin{align}\label{eq:monop1}
\begin{aligned}
\limsup_{m,k}&\,\underline{\E}\bigg[\int_\Q\eta  \big(\bfS(\ep(\underline{\bfv}^m))-\bfS(\ep(\underline{\bfv}^k))\big):\Psi_L(|\underline{\bfu}^{m,k}|)\ep(\underline{\bfu}^{m,k})\dxs\bigg]\\
&\leq \limsup_{m,k}\underline{\E}\bigg[\int_\Q \eta  \big(\bfS(\ep(\underline{\bfv}^m))-\bfS(\ep(\underline{\bfv}^k))\big):\nabla\big\{\Psi_L(|\underline{\bfu}^{m,k}|)\big\}\otimes\underline{\bfu}^{m,k}\dxs\bigg]\\
&+ \limsup_{m,k}\underline{\E}\bigg[\int_\Q \eta\,\underline{\vartheta}^{m,k}_{1}\Div\big(\Psi_L(|\underline{\bfu}^{m,k}|)\underline{\bfu}^{m,k}\big)\dxs\bigg].
\end{aligned}
\end{align}
Now we want to show that the r.h.s. is bounded in $L$. Since $\Div\underline{u}^{m,k}=0$ there holds
\begin{align*}
\limsup_{m,k}\underline{\E}&\bigg[\int_\Q \eta\,\underline{\vartheta}^{m,k}_{1}\Div\big(\Psi_L(|\underline{\bfu}^{m,k}|)\underline{\bfu}^{m,k}\big)\dxs\bigg]\\&=\limsup_{m,k}\underline{\E}\bigg[\int_\Q \eta\,\underline{\vartheta}^{m,k}_{1}\nabla\big\{\Psi_L(|\underline{\bfu}^{m,k}|)\big\}\cdot\underline{\bfu}^{m,k}\dxs\bigg].
\end{align*}
So, by (\ref{eq:convu}) and (\ref{eq:convpi}), we only need to show
\begin{align}\label{eq:Psip}
\nabla\Psi_L(|\underline{\bfu}^{m,k}|)\underline{\bfu}^{m,k}\in L^p(\underline{\Omega}\times \Q,\underline{\p}\otimes\mathcal L^{d+1})
\end{align}
uniformly in $L$, $m$ and $k$ to conclude
\begin{align}\label{eq:monop2}
\limsup_{m,k}&\,\underline{\E}\bigg[\int_\Q\eta  \big(\bfS(\ep(\underline{\bfv}^m))-\bfS(\ep(\underline{\bfv}^k))\big):\Psi_L(|\underline{\bfu}^{m,k}|)\ep(\underline{\bfu}^{m,k})\dxs\bigg]\leq K.
\end{align}
We have for all $\ell\in\N_0$
\begin{align*}
\big|\nabla\big\{\psi_{2^{-\ell}}(|\underline{\bfu}^{m,k}|)\big\}\underline{\bfu}^{m,k}\big|&\leq \big|\psi'_{2^{-\ell}}(|\underline{\bfu}^{m,k}|)\underline{\bfu}^{m,k}\otimes\nabla\underline{\bfu}^{m,k}\big|\\
&\leq -2^{-\ell}|\underline{\bfu}^{m,k}|\psi'(2^{-\ell}|\underline{\bfu}^{m,k}|)|\nabla\underline{\bfu}^{m,k}|\\
&\leq c |\nabla\underline{\bfu}^{m,k}|\chi_{A_\ell},\\
A_\ell&:=\left\{2^{\ell}< |\underline{\bfu}^{m,k}|\leq 2^{\ell+1}\right\}.
\end{align*}
This implies
\begin{align*}
\big|\nabla\Psi_{L}(|\underline{\bfu}^{m,k}|)\underline{\bfu}^{m,k}\big|&\leq \sum_{\ell=0}^L \big|\nabla\big\{\psi_{2^{-\ell}}(|\underline{\bfu}^{m,k}|)\big\}\underline{\bfu}^{m,k}\big|\\
&\leq c\sum_{\ell=0}^L \big|\nabla\underline{\bfu}^{m,k}\big|\chi_{A_\ell}\leq c|\nabla\underline{\bfu}^{m,k}|.
\end{align*}
This yields (\ref{eq:Psip}) and hence also (\ref{eq:monop2}) is shown.
Now we consider
\begin{align*}
\Sigma_{L,m,k}:=\,\underline{\E}\bigg[\int_\Q\eta  \big(\bfS(\ep(\underline{\bfv}^m))-\bfS(\ep(\underline{\bfv}^k))\big):\Psi_L(|\underline{\bfu}^{m,k}|)\ep(\underline{\bfu}^{m,k})\dxs\bigg].
\end{align*}
On account of (\ref{eq:monop2}) we have $\Sigma_{L,m}\leq K$ independent from $L$ and $m$. Thus, using Cantor's diagonalizing principle we gain a subsequence with
\begin{align*}
\sigma_{\ell,m_l,k_l}:=\,\underline{\E}\bigg[\int_\Q\eta  \big(\bfS(\ep(\underline{\bfv}^{m_l}))-\bfS(\ep(\underline{\bfv}^{k_l}))\big):\psi_{2^{-\ell}}(|\underline{\bfu}^{m_l,k_l}|)\ep(\underline{\bfu}^{m_l,k_l})\dxs\bigg]
\longrightarrow \sigma_\ell,\quad \ell\in\N_0,
\end{align*}
for $l\rightarrow\infty$. We know as a consequence of the monotonicity of $\bfS$ that $\sigma_\ell\geq0$ for all $\ell\in\N$.
Moreover, $\sigma_\ell$ is increasing in $\ell$. This implies on account of (\ref{eq:monop2})
\begin{align*}
0\leq \sigma_0\leq \frac{\sigma_0+\sigma_1+...+\sigma_\ell}{\ell}\leq \frac{K}{\ell}
\end{align*}
for all $\ell\in\N$. Hence we have $\sigma_0=0$ and therefore
\begin{align*}
\underline{\E}\bigg[\int_\Q \big(\bfS(\ep(\underline{\bfv}^m))-\bfS(\ep(\underline{\bfv}^k))\big):\psi_1(|\underline{\bfu}^{m,k}|)\ep(\underline{\bfu}^{m,k})\dxs\bigg]\longrightarrow0,\quad m,k\rightarrow0.
\end{align*}
Due to (\ref{eq:compactpi}) we infer
\begin{align}\label{eq:monop1b}
\underline{\E}\bigg[\int_\Q \big(\bfS(\ep(\underline{\bfv}^m))-\bfS(\ep(\underline{\bfv}^k))\big):\psi_1(|\underline{\bfu}^{m,k}|)\ep(\underline{\bfv}^{m,k})\dxs\bigg]\longrightarrow0,\quad m,k\rightarrow0,
\end{align}
For $\theta\in(0,1)$ we obain
\begin{align*}
\underline{\E}\bigg[&\int_\Q \Big(\big(\bfS(\ep(\underline{\bfv}^{m}))-\bfS(\ep(\underline{\bfv}^k))\big):\ep(\underline{\bfv}^{m,k})\Big)^\theta\dxs\bigg]\\&=\int_{\Omega\times \Q} \chi_{\set{|\underline{\bfu}^{m,k}|> 1}}\Big(\big(\bfS(\ep(\underline{\bfv}^{m}))-\bfS(\ep(\underline{\bfv}^k))\big):\ep(\underline{\bfv}^{m,k})\Big)^\theta\dxs\,\mathrm{d}\p\\
&+\int_{\Omega\times \Q} \chi_{\set{|\underline{\bfu}^{m,k}|\leq 1}}\Big(\big(\bfS(\ep(\underline{\bfv}^m))-\bfS(\ep(\underline{\bfv}^k))\big):\ep(\underline{\bfv}^{m,k})\Big)^\theta\dxs\,\mathrm{d}\p\\
&=:(A)+(B).
\end{align*}
By (\ref{eq:convu1}) and (\ref{eq:convu2}) there holds
\begin{align*}
(A)&\leq \p\otimes \LL^{d+1}\big([|\underline{\bfu}^{m,k}|\geq 1]\big)^{1-\theta}\bigg(\int_{\Omega\times \Q}\big(\bfS(\ep(\underline{\bfv}^m))-\bfS(\ep(\underline{\bfv}^k))\big):\ep(\underline{\bfu}^{m,k})\dxs\,\mathrm{d}\p\bigg)^\theta\\
&\leq c\bigg(\underline{\E}\bigg[\int_\Q |\underline{\bfu}^m-\underline{\bfu}^k|^2\dxs\bigg]\bigg)^{1-\theta}\longrightarrow0,\quad m,k\rightarrow0,
\end{align*}
where we took into account H\"older's inequality.  Since $(B)$ also vanishes for $m,k\rightarrow0$ by (\ref{eq:monop1b}) we
finally have shown
\begin{align*}
\underline{\E}\bigg[\int_\Q \Big(\big(\bfS(\ep(\underline{\bfv}^m))-\bfS(\ep(\underline{\bfv}^k))\big):\ep\big(\underline{\bfv}^m-\underline{\bfv}^k\big)\Big)^\theta\dxs\bigg]
&\longrightarrow0,\quad m,k\rightarrow0,
\end{align*}
for all $\theta<1$. The monotonicity of $\bfS$ implies that $\ep(\underline{\bfv}^m)$ is Cauchy sequence $\underline{\p}\otimes \LL^{d+1}$-a.e. The limit function therefore exists $\underline{\p}\otimes \LL^{d+1}$ but has to be equal to $\ep(\underline{\bfv})$ on account of (\ref{eq:convu})$_1$.
This justifies the limit procedure in the energy integral, e.g. $\underline{\tilde{\bfS}}=\bfS(\ep(\underline{\bfv}))$ is shown and the proof of Theorem \ref{thm:main} is therefore complete.

\section{Appendix: It\^{o}'s formula in infinite dimensions}
In this section we establish a version of It\^{o}'s formula which holds for weak solutions of SPDE's on a probability space $(\Omega,\F,\p)$. Let $\bfu\in \mathcal V_{p,q}$ with $p,q\in(1,\infty)$ be a solution to the system
\begin{align}\label{eq:app1}
\begin{aligned}
\int_G\bfu(t)\cdot\bfphi\dx&=\int_G\bfu_0\cdot\bfvarphi\dx+\int_0^t\int_G\bfH:\nabla\bfphi\dxs\\&+\int_0^t\int_G \bfh\cdot\bfphi\dxt+\int_G\int_0^t\bfphi\cdot\Phi\,\dd\bfW_\sigma\dx
\end{aligned}
\end{align}
for all $\varphi\in C^\infty_0(G)$, where $\bfW$ is given by (\ref{eq:W}). We assume
\begin{enumerate}[label={\rm (I\arabic{*})}, leftmargin=*]
\item\label{itm:I1} $\bfu_0\in L^2(\Omega,\F_0,\p;L^2(G))$;
\item\label{itm:I2} $\bfH\in L^{p'}(\Omega,\F,\p;L^{p'}(\Q))$ adapted to $(\mathcal F_t)$;
\item\label{itm:I3} $\bfh\in L^{q'}(\Omega,\F,\p;L^{q'}(\Q))$ adapted to $(\mathcal F_t)$;
\item\label{itm:I4} $\Phi\in L^2(\Omega,\F,\p; L^2(0,T;L_2(U,L^2(G))))$ progressively measurable.
\end{enumerate}

\begin{lemma}[It\^{o}'s Lemma]
\label{lems:Ito}
Let $$f:L^2(G)\rightarrow \R,\quad \bfv\mapsto\int_G F(x,\bfv)\dx,$$ where $F\in C^2(G\times\R^d)$ has the following properties: for all $(x,\bfxi)\in G\times\R^d$ we have
\begin{itemize}
\item $|D_\bfxi F(x,\bfxi)|\leq c(1+|\bfxi|)$ and $|D^2_\bfxi(x,\bfxi)|\leq c$;
\item  $|D_x D_\bfxi F(x,\bfxi)|\leq c(1+|\bfxi|)$.
\end{itemize}
Let $\bfu\in \mathcal V_{p,q}$ with $p,q\in(1,\infty)$ be a solution to (\ref{eq:app1}) under \ref{itm:I1}-\ref{itm:I4}. Then there holds
\begin{align*}
f(\bfu(t))=&f(\bfu(0))+\int_0^t f'(\bfu)\,\dd\bfu+\frac{1}{2}\int_0^t f''(\bfu)\,\dd\langle\bfu\rangle_\sigma,
\end{align*}
where the term $f'(\underline{\bfu})\,\dd\underline{\bfu}$ has to be understood as an appropriate duality product.
\end{lemma}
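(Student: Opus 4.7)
The strategy is to reduce the infinite-dimensional identity to the classical finite-dimensional It\^o formula by mollifying in the spatial variable, and then passing to the limit in the mollification parameter. Fix a standard mollifier $(\rho_\varepsilon)_{\varepsilon>0}$ on $\R^d$, extend $\bfu, \bfH, \bfh$ by zero outside $G$, and set $\bfu^\varepsilon := \rho_\varepsilon \ast \bfu$, $\bfH^\varepsilon := \rho_\varepsilon \ast \bfH$, $\bfh^\varepsilon := \rho_\varepsilon \ast \bfh$ and $\Phi^\varepsilon \bfe_k := \rho_\varepsilon \ast(\Phi\bfe_k)$. For $x$ in the $\varepsilon$-interior $G_\varepsilon$, testing \eqref{eq:app1} with $\bfphi(\cdot)=\rho_\varepsilon(x-\cdot)$ (and appealing to the stochastic Fubini theorem for the noise term) shows that for $\p$-a.a. $\omega$ and a.e.~$x\in G_\varepsilon$, $t\mapsto \bfu^\varepsilon(x,t)$ is a continuous $\R^d$-valued semimartingale with
\[
\bfu^\varepsilon(x,t)=\bfu_0^\varepsilon(x)-\int_0^t\Div\bfH^\varepsilon(x,\sigma)\ds+\int_0^t\bfh^\varepsilon(x,\sigma)\ds+\int_0^t\Phi^\varepsilon(x,\sigma)\,\dd\bfW_\sigma,
\]
so that in particular $d\langle\bfu^\varepsilon(x,\cdot)\rangle_\sigma=\sum_k g_k^\varepsilon(x,\sigma)\otimes g_k^\varepsilon(x,\sigma)\,d\sigma$ with $g_k^\varepsilon:=\rho_\varepsilon\ast(\Phi\bfe_k)$.

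Since $\bfxi\mapsto F(x,\bfxi)\in C^2(\R^d)$, the classical finite-dimensional It\^o formula applied pointwise in $x\in G_\varepsilon$ yields
\[
F(x,\bfu^\varepsilon(x,t))-F(x,\bfu_0^\varepsilon(x))=\int_0^t D_\bfxi F(x,\bfu^\varepsilon)\cdot\dd\bfu^\varepsilon+\tfrac{1}{2}\int_0^t \tr\bigl[D^2_\bfxi F(x,\bfu^\varepsilon)\,\dd\langle\bfu^\varepsilon\rangle_\sigma\bigr].
\]
Integrating over $G$, using Fubini and stochastic Fubini, and integrating by parts in the spatial gradient of the term involving $\Div\bfH^\varepsilon$ (which is legal in $G_\varepsilon$ since all functions are smooth there), one obtains
\begin{align*}
f(\bfu^\varepsilon(t))&=f(\bfu^\varepsilon(0))+\int_0^t\!\!\int_G\bfH^\varepsilon:\bigl[D_xD_\bfxi F(x,\bfu^\varepsilon)+D_\bfxi^2F(x,\bfu^\varepsilon)\nabla\bfu^\varepsilon\bigr]\dxs\\
&+\int_0^t\!\!\int_G D_\bfxi F(x,\bfu^\varepsilon)\cdot\bfh^\varepsilon\dxs+\int_G\!\int_0^t D_\bfxi F(x,\bfu^\varepsilon)\cdot\Phi^\varepsilon\,\dd\bfW_\sigma\dx\\
&+\tfrac{1}{2}\sum_k\int_0^t\!\!\int_G D_\bfxi^2 F(x,\bfu^\varepsilon):g_k^\varepsilon\otimes g_k^\varepsilon\dxs,
\end{align*}
which is exactly the claimed Itô identity at the mollified level. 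Boundary effects vanish because the mollified objects are supported in $G_\varepsilon$, which exhausts $G$ as $\varepsilon\to 0$.

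It remains to pass $\varepsilon\to 0$ in every term. Using $\bfu\in L^p(0,T;W^{1,p}_0(G))\cap L^\infty(0,T;L^2(G))\cap L^q(\Q)$ $\p$-a.s., the classical convergences $\bfu^\varepsilon\to\bfu$, $\bfH^\varepsilon\to\bfH$, $\bfh^\varepsilon\to\bfh$, $\nabla\bfu^\varepsilon\to\nabla\bfu$ in the respective $L^\alpha$ norms, together with the growth hypotheses $|D_\bfxi F|\leq c(1+|\bfxi|)$, $|D_xD_\bfxi F|\leq c(1+|\bfxi|)$ and $|D^2_\bfxi F|\leq c$, guarantee equi-integrability and $\p\otimes\LL^{d+1}$-a.e. convergence of each integrand (via a subsequence), so that Vitali's theorem disposes of the four deterministic integrals and the quadratic variation term. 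For the stochastic integral, the assumption $\Phi\in L^2(\Omega;L^2(0,T;L_2(U,L^2(G))))$ and the linear growth of $D_\bfxi F$ combine to give
\[
\E\int_0^t\sum_k\Bigl(\int_G\bigl[D_\bfxi F(x,\bfu^\varepsilon)\cdot g_k^\varepsilon-D_\bfxi F(x,\bfu)\cdot g_k\bigr]\dx\Bigr)^2\dd\sigma\longrightarrow 0,
\]
hence convergence in probability of the stochastic integral by the It\^o isometry. The left-hand side $f(\bfu^\varepsilon(t))\to f(\bfu(t))$ $\p$-a.s. follows from $\bfu^\varepsilon(t)\to\bfu(t)$ in $L^2(G)$ (at Lebesgue points) plus the continuity of $f$ on $L^2(G)$. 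The main obstacle is precisely the control of the duality pairing involving $\bfH^\varepsilon:\bigl[D_\bfxi^2F(x,\bfu^\varepsilon)\nabla\bfu^\varepsilon\bigr]$, where only $L^{p'}$-regularity of $\bfH$ meets $L^p$-regularity of $\nabla\bfu$; boundedness of $D^2_\bfxi F$ is exactly what makes this pairing $L^1$-equi-integrable and allows Vitali to apply.
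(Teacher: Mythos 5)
Your proof is correct and takes essentially the same approach as the paper, which (following Debussche--Hofmanov\'a--Vovelle, cited as \cite{DHV}) mollifies the equation in space, applies the finite-dimensional It\^{o} formula pointwise in $x$, integrates over the domain, and passes to the limit in the mollification parameter. Your write-up simply spells out the integration by parts that produces the duality pairing and the Vitali/It\^{o}-isometry arguments for the limit passage, both of which the paper delegates to the cited reference.
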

\begin{proof}
We follow the ideas of \cite{DHV}, Prop. 1. We replace $\bfvarphi$ with the mollification $\bfvarphi_\varrho$, where $\varrho<\mathrm{dist}(\mathrm{spt}(\bfvarphi),\partial G)$. This yields
\begin{align*}
\bfu_{\varrho}(t)=(\bfu_0)_\varrho-\int_0^t\Div\bfH_\varrho\ds+\int_0^t\bfh_\varrho\ds+\sum_k\int_0^t(\Phi_k)_\rho\,\dd\beta_k,
\end{align*}
where $\Phi_k:=\Phi\bfe_k$ a.e. on $G$. So we can apply the common finite-dimensional It\^{o} formula to the real-valued process $t\mapsto f(\bfu_\varrho(t))$ and gain
\begin{align}\label{191}
f(\bfu_\varrho(t))=&f(\bfu_\varrho(0))+\int_0^t f'(\bfu_\varrho)\,\dd\bfu_\varrho+\frac{1}{2}\int_0^t f''(\bfu_\varrho)\,\dd\langle\bfu_\varrho\rangle_\sigma.
\end{align}
Here we have for $\bfw\in L^2(G)$
\begin{align*}
f'(\bfw)\bfvarphi&=\int_G D_\bfxi F(x,\bfw)\cdot\bfvarphi\dx,\quad \bfvarphi\in L^2(G),\\ f''(\bfw)(\bfvarphi,\bfpsi)&=\int_G D^2_\bfxi F(x,\bfw)(\bfvarphi,\bfpsi)\dx,\quad \bfvarphi,\bfpsi\in L^2(G).
\end{align*}
The process $t\mapsto\langle\bfu_\varrho\rangle_t$ is the quadratic variation of $\bfu_\rho$ with values in $\mathcal N(L^2(G))$(the set of nuclear operators on $L^2(G)$).\\
As a consequence of the convergence properties of the convolution a passage to the limit in \eqref{191} implies the claim (see \cite{DHV} for more details).
\end{proof}

\subsection*{Acknowledgement}
\begin{itemize}
\item The work of the author was supported by Leopoldina (German National Academy of Science).
\item The author wishes to thank M. Hofmanov\'{a} for many helpful discussions about stochastic PDEs.
\item The author is also grateful to the referee for his careful reading of the paper, and for his valuables suggestions.
\end{itemize}


\begin{thebibliography}{[M]}

\bibitem{Am} H. Amann (2000): \emph{Compact embeddings of vector-valued Sobolev and Besov
spaces.} Glass. Mat., III. Ser. 35 (55), 161--177.
\bibitem{AM} G. Astarita and G. Marrucci (1974): \emph{Principles of non-Newtonian fluid mechanics.} McGraw-Hill, London-New York.
\bibitem{Ar} L. Arnold (1973): \emph{Stochastic Differential Equations: Theory and Applications.} J. Wiley \& Sons, New York.
	\bibitem{BAH} R. Bird, R. Armstrong, O. Hassager (1987): \emph{Dynamics of polymeric liquids, volume 1: fluid mechanics (second edition).} John Wiley.	
%\bibitem[BeDiRu]{BeDiRu} L. C. Berselli, L. Diening, and M. R\r{u}\v{z}i\v{c}ka (2009): Optimal error estimates for a semi-implicit Euler scheme for incompressible fluids with shear dependent viscosities. SIAM J. Numer. Anal. 47, 2177--2202. MR 2519599, doi:10.1137/080720024
	%\bibitem[BeFl]{BeFl} L. Beck, F. Flandoli (20113): Random perturbations of nonlinear parabolic systems. J. Evol. Equ. 13, 829--874.	
\bibitem{BeTe} Bensoussan, A.; Temam, R. (1973): \emph{\'{E}quations stochastiques du type Navier-Stokes.} (French) J. Funct. Anal. 13, 195--222.
%\bibitem[Bi]{Bi} M. Bildhauer (2003): Convex variational problems: linear, nearly linear and anisotropic growth conditions. Lecture Notes in Mathematics 1818, Springer, Berlin-Heidelberg-New York.
%\bibitem[BiFu]{BiFu} M. Bildhauer, M. Fuchs  (2010): Differentiability and higher integrability results for local minimizers of splitting type variational integrals in 2D with applications to nonlinear Hencky materials. Calc. Variations 37(1-2), 167--186.
	\bibitem{Bog}
M.~E. Bogovski{\u\i}.
\newblock Solutions of some problems of vector analysis, associated with the
  operators ${\rm div}$\ and ${\rm grad}$.
\newblock In {\em Theory of cubature formulas and the application of functional
  analysis to problems of mathematical physics (Russian)}, pages 5--40, 149.
  Akad. Nauk SSSR Sibirsk. Otdel. Inst. Mat., Novosibirsk, 1980.
%\bibitem[Br]{Br} D. Breit (2010): The partial regularity for minimizers of splitting type variational integrals under general growth conditions II. The nonautonomous case. J. Math. Sciences 166, No. 3, 259--281.
%\bibitem[Br]{Br3} D. Breit (2013): On systems of parabolic stochastic PDE's: the non linear Laplacian. arXiv:1311.2001v1
\bibitem{BrDS} D. Breit, L. Diening, S. Schwarzacher (2013): \emph{Solenoidal Lipschitz truncation for 
parabolic PDE's.} Math. Mod. Meth. Appl. Sci. 23, 2671--2700.
\bibitem{BrHo} D. Breit, M. Hofmanov\'{a}: \emph{ Stochastic Navier-Stokes equations for compressible fluids.} Preprint at arXiv:1409.2706v1
\bibitem{ChCh} Chen, Jianwen; Chen, Zhi-Min (2010): \emph{Stochastic non-Newtonian fluid motion equations of a nonlinear bipolar viscous fluid.} J. Math. Anal. Appl. 369, no. 2, 486--509.
\bibitem{CS1} Ph. Cl\'ement, G. Sweers (2000): \emph{Uniform anti-maximum principles for polyharmonic equations.} Proc. Amer. Math. Soc. 129, 467--474.
\bibitem{CS2} Ph. Cl\'ement, G. Sweers (2000): \emph{Uniform anti-maximum principles.} J. Diff.
Eq. 164, 118--154.
%\bibitem[DeSa]{DeSa}Gabriel Deugoue, Mamadou Sango (2011): Weak solutions to stochastic 3D Navier--Stokes--$\alpha$ model of turbulence:
%$\alpha$--Asymptotic behavior. J. Math. Anal. Appl. 384, 49--62
%\bibitem[DiFr]{DiFr} E. DiBenedetto A. Friedmann, Regularity of solutions of nonlinear degenerate parabolic systems, J. Reine Angew. Math. 349, pp. 83--128.
%\bibitem[DuMiSt]{DuMiSt} F. Duzaar, R. Mingione, K. Steffen (2011): Parabolic Systems with Polynomial Growth and Regularity. Memoires of the American Mathematical Society 214, no. 1005.
   \bibitem{PrZa} G. Da Prato, J. Zabczyk (1992): \emph{Stochastic Equations in Infinite Dimensions.} Encyclopedia
Math. Appl., vol. 44, Cambridge University Press, Cambridge. 
\bibitem{DeGlTe} A. Debussche, N. Glatt-Holtz, R. Temam (2011): \emph{Local Martingale and Pathwise Solutions for an Abstract Fluids Model.} Physica D: Nonlinear Phenomena 240, Issues 14--15, Pages 1123--1144.
\bibitem{DHV} A. Debussche, M. Hofmanova, J. Vovelle: \emph{Degenerate parabolic stochastic partial differential equations: quasilinear case.} Preprint at arXiv:1309.5817v1 
	\bibitem{DMS} L. Diening, J. M\'alek, M. Steinhauer (2008): \emph{On Lipschitz Truncations of Sobolev Functions (with Variable Exponent) and Their Selected Applications.} ESAIM Control Optim. Calc. Var. 14, no. 2,
211--232.
\bibitem{DRW} L. Diening, M. R\r{u}\v{z}i\v{c}ka, J. Wolf (2010): \emph{Existence of weak solutions for unsteady motions of generalized Newtonian fluids.} Ann. Sc. Norm. Sup. Pisa Cl. Sci. (5) Vol. IX (2010), 1-46.
\bibitem{Fl1} F. Flandoli
(1990): \emph{Dirichlet boundary value problem for stochastic parabolic equations:
Compatibility relations and regularity of solutions.} Stoch. Stoch. Rep. 29 (3), 331--357.
\bibitem{Fl2} F. Flandoli (2008): \emph{An introduction to 3D stochastic fluid dynamics. In SPDE in Hydrodynamic:
Recent Progress and Prospects.} Lecture Notes in Math. 1942 51--150. Springer,
Berlin.
\bibitem{Fr1} A. Friedman (1975): \emph{Stochastic Differential Equations and Applications I.} Academic Press, New York.
\bibitem{Fr2} A. Friedman (1976): \emph{Stochastic Differential Equations and Applications II.} Academic Press, New York.
\bibitem{FrMaSt} J. Frehse, J. M\'alek, and M. Steinhauer (1997): \emph{An existence result for fluids with shear dependent
viscosit--steady flows.} Nonlinear Anal. 30, pp. 3041--3049.
		\bibitem{FMS2} J. Frehse, J. M\'alek, M. Steinhauer (2003): \emph{On analysis of steady flows of fluids with shear-dependent viscosity based on the Lipschitz truncation method.} SIAM J. Math. Anal. 34 (5), 1064-1083 (electronic).
   \bibitem{Ga1} G. Galdi (1994): \emph{An introduction to the mathematical theory of the Navier-Stokes equations Vol. I.} Springer Tracts in Natural Philosophy Vol. 38. Springer, Berlin-New York.
   \bibitem{Ga2} G. Galdi (1994): \emph{An introduction to the mathematical theory of the Navier-Stokes equations Vol. II.} Springer Tracts in Natural Philosophy Vol. 39. Springer, Berlin-New York.
%\bibitem[Gi]{Gi} Giaquinta, M.: Multiple integrals in the calculus of variations and nonlinear elliptic systems. Annals of Mathematics Studies, 105. Princeton University Press, Princeton, NJ, 1983.
%\bibitem[Giu]{Giu} Giusti, E.: Direct methods in the Calculus of Variations. World Scientific, Singapore, 2003.
%	\bibitem[GT]{GT} D. Gilbarg, N. S. Trudinger (2001): Elliptic partial differential equations of second order. Springer, Berlin-Heidelberg-New York.
	\bibitem{Ho1} M. Hoffmanov\'{a}: \emph{Degenerate Parabolic Stochastic Partial
Differential Equations.} Stoch. Pr. Ap. 123 (12), 4294--4336.
	%\bibitem[Ho2]{Ho2} M. Hoffmanov\'{a}: Strong solutions of semilinear stochastic partial differential equations. To appear in Nonlinear Differ. Equ. Appl.
\bibitem{HoSe} M. Hofmanov\'{a}, J. Seidler (2012): \emph{On weak solutions of stochastic differential equations.} Stoch. Anal. Appl. 30 (1), 100--121.
\bibitem{IkWa}	N. Ikeda,  S. Watanabe (1989): \emph{Stochastic Differential Equations and Diffusion Processes.}
2nd ed. North-Holland Mathematical Library 24. North-Holland, Amsterdam.
%\bibitem[Kr]{Kr} N. V. Krylov (1994): A $W^{n,2}$-theory of the Dirichlet problem for SPDEs in general smooth domains. Probab. Theory Related Fields 98 (3), 389--421.
%\bibitem[KrRo1]{KrRo1} N. V. Krylov, B. L. Rozovskii (1977): On the Cauchy problem for linear stochastic partial differetial equations. Izv. Akad. Nauk. SSSR Ser. Mat. 41 (6), 1329-1347; English transl. Math. USSR Izv. 11 (1977).
%\bibitem[KrRo2]{KrRo2} N. V. Krylov, B. L. Rozovskii (1979): Stochastic evolution equations. Itogi Nauki i Tekhniki. Ser. Sovrem. Probl. Mat. 14, VINITI, Moscow, 71--146; English transl. J. Sov. Math., 16 (4) (1981), 1233--1277.
\bibitem{jakubow} A. Jakubowski (1997/1998): \emph{The almost sure Skorokhod representation for subsequences in nonmetric spaces.} Teor. Veroyatnost. i Primenen 42, no. 1, 209--216/translation in Theory Probab. Appl. 42, no. 1, 167--174.
\bibitem{KrRo2} N. V. Krylov, B. L. Rozovskii (1979): \emph{Stochastic evolution equations.} Itogi Nauki i
Tekhniki. Ser. Sovrem. Probl. Mat. 14, VINITI, Moscow, 71--146; English
transl. J. Sov. Math., 16 (4) (1981), 1233--1277.
   \bibitem{La} O. A. Ladyzhenskaya (1969): \emph{The mathematical theory of viscous incompressible flow.} Gorden and Breach.
   \bibitem{La2} O. A. Ladyzhenskaya (1967): \emph{On some new equations describing dynamics of incompressible fluids
and on global solvability of boundary value problems to these equations.} Trudy Steklov's
Math. Institute 102, 85-104.
   \bibitem{La3} O. A. Ladyzhenskaya (1968): \emph{On some modifications of the Navier-Stokes equations for large gradients
of velocity.} Zap. Nauchn. Sem. Leningrad. Otdel. Mat. Inst. Steklov (LOMI) 7, 126-154.
\bibitem{Li}J. L. Lions (1969): \emph{Quelques m\'{e}thodes de r\'{e}solution des probl\`{e}mes aux limites non lin\'{e}aires.}
%\bibitem[LaUrSo]{LaUrSo} Ladyzhenskaya, O.A., Solonnikov, V.A., Ural'tseva, N.N.: Linear and quasi-linear equations of parabolic type. Translations of Mathematical Monographs. 23. Amer. Math. Soc., Providence, RI, 1967.
%\bibitem[LiSo1]{LiSo1}Lions, Pierre-Louis; Souganidis, Panagiotis E.  (1998): Fully nonlinear stochastic partial differential equations. C. R. Acad. Sci. Paris S\'{e}r. I Math. 326, no. 9, 1085--1092.
%\bibitem[LiSo2]{LiSo2}Lions, Pierre-Louis; Souganidis, Panagiotis E. (2000): Uniqueness of weak solutions of fully nonlinear stochastic partial differential equations. C. R. Acad. Sci. Paris S\'{e}r. I Math. 331, no. 10, 783--790.
\bibitem{mikul} R. Mikulevicius, B.\,L. Rozovskii  (2004): \emph{Stochastic Navier-Stokes equations for turbulent flows.} SIAM J. Math. Anal. 35 (5), 1250--1310.
\bibitem{Mul}
R.~M{\"u}ller, \emph{Das schwache {D}irichletproblem in {$L^q$} f{\"u}r den
  {B}ipotentialoperator in beschr{\"a}nkten {G}ebieten und in {A}u\ss
  engebieten}, Bayreuth. Math. Schr. (1995), no.~49, 115--211, Dissertation,
  Universit{\"a}t Bayreuth, Bayreuth, 1994.
   \bibitem{MNRR} J. M{\'a}lek, J. Ne\v{c}as, M. Rokyta, M. R\r{u}\v{z}i\v{c}ka (1996):                     \emph{Weak and measure valued solutions to evolutionary PDEs.} Chapman \& Hall, London-Weinheim-New York.
\bibitem{on2} M. Ondrej\'at (2010): \emph{Stochastic nonlinear wave equations in
local Sobolev spaces.} Electronic Journal of Probability 15 (33), 1041--1091.
\bibitem{PrRo}  C. Pr\'{e}v\^{o}t, M. R\"ockner (2007): \emph{A concise course on stochastic partial differential equations.} Lecture Notes in Mathematics, 1905. Springer, Berlin.
   \bibitem{Ro} B.L. Rozovskii (1990): \emph{Stochastic evolution systems. Linear theory and applications to non-linear Filtering.} Mathematics and
Its Applications (Soviet Series), 35. Dordrecht etc.: Kluwer Academic Publishers. xviii.
\bibitem{TeYo} Y. Terasawa, N. Yoshida (2011): \emph{Stochastic power-law fluids:
existence and uniqueness of weak solutions.} Ann. Appl. Prob. 21, No. 5, 1827--1859.
%\bibitem[Uh]{Uh} Uhlenbeck, K.: Regularity for a class of non-linear elliptic systems. Acta Math. 138 (1977), 219--240. 
%\bibitem[Wi]{Wi} Wiegner, M.: On $C^\alpha$-regularity of the gradient of solutions of degenerate parabolic systems.
\bibitem{Wo} J. Wolf (2007): \emph{Existence of weak solutions to the equations of nonstationary motion of non-
Newtonian fluids with shear-dependent viscosity.} J. Math. Fluid Mech. 9, 104-138.
\bibitem{Yo} N. Yoshida (2012): \emph{Stochastic Shear thickenning fluids: strong convergence of the
Galerkin approximation and the energy inequality.} Ann. Appl. Prob. 22, No. 3, 1215--1242.
% \bibitem[Zh]{Zh} X. Zhang (2010): Smooth solutions of non-linear stochastic partial differential equations driven by multiplicative noises. Sci. China Math. 53 (2010), 2949--2972.

\end{thebibliography}
\end{document}